\definecolor{darkgray}{RGB}{64,64,64}
\definecolor{litegray}{RGB}{192,192,192}
\tikzstyle{vertex}=[circle, draw, fill=litegray, inner sep=0pt, minimum width=4pt]
\tikzstyle{braket}=[decorate,decoration={brace,amplitude=10pt},xshift=0pt,yshift=-10pt, darkgray]
\author{Zilin Jiang\thanks{Department of Mathematics, Massachusetts Institute of Technology, Cambridge, MA 02139, USA. Email: {\tt zilinj@mit.edu}. Supported in part by Israel Science Foundation (ISF) grant nos 1162/15, 936/16.}${\ }^{,}$\footnotemark[3] \and Alexandr Polyanskii\thanks{Moscow Institute of Physics and Technology, Institute for Information Transmission Problems RAS, and Caucasus Mathematical Center, Adyghe State University. Email: {\tt alexander.polyanskii@yandex.ru}. Supported in part by ISF grant no. 409/16, by the Russian Foundation for Basic Research through grant no. 15-01-03530 A, and by the Leading Scientific Schools of Russia through grant no. NSh-6760.2018.1.}${\ }^{,}$\footnote{The work was done when the authors were postdoctoral fellows at Technion -- Israel Institute of Technology.}}
\title{Forbidden subgraphs for graphs of bounded spectral radius,\\ with applications to equiangular lines}
\date{}
\newtheorem{theorem}{Theorem}
\newtheorem{corollary}[theorem]{Corollary}
\newtheorem{conjecture}{Conjecture}
\newtheorem{lemma}[theorem]{Lemma}
\newtheorem{proposition}[theorem]{Proposition}
\theoremstyle{definition}
\newtheorem{definition}{Definition}
\theoremstyle{remark}
\newtheorem{remark}{Remark}
\newcommand{\dset}[2]{\left\{#1 : #2\right\}}
\newcommand{\sset}[1]{\left\{#1\right\}}
\newcommand{\al}{\alpha}
\newcommand{\la}{\lambda}
\newcommand{\eps}{\varepsilon}
\newcommand{\downto}{\searrow}
\newcommand{\upto}{\nearrow}
\newcommand{\from}{\colon}
\newcommand{\N}{\mathbb{N}}
\newcommand{\Z}{\mathbb{Z}}
\newcommand{\Q}{\mathbb{Q}}
\newcommand{\R}{\mathbb{R}}
\newcommand{\F}{\mathcal{F}}
\newcommand{\G}{\mathcal{G}}
\newcommand{\floor}[1]{\left\lfloor#1\right\rfloor}
\newcommand{\abs}[1]{\left\lvert#1\right\rvert}
\newcommand{\norm}[1]{\left\lVert#1\right\rVert}
\newcommand{\normsq}[1]{\norm{#1}^2}
\newcommand\ip[2]{\left\langle#1,#2\right\rangle}
\newcommand{\tr}[1]{\operatorname{tr}\left({#1}\right)}
\newcommand{\rank}[1]{\operatorname{rank}\left({#1}\right)}
\newcommand{\E}[1]{\operatorname{E}\left[{#1}\right]}
\newcommand{\pr}[1]{\operatorname{Pr}\left({#1}\right)}
\begin{document}

\maketitle

\begin{abstract}
  The spectral radius of a graph is the largest eigenvalue of its adjacency matrix. Let $\mathcal{F}(\lambda)$ be the family of connected graphs of spectral radius $\le \lambda$. We show that $\mathcal{F}(\lambda)$ can be defined by a finite set of forbidden subgraphs if and only if $\lambda < \lambda^* := \sqrt{2+\sqrt{5}} \approx 2.058$ and $\lambda \not\in \{\alpha_2, \alpha_3, \dots\}$, where $\alpha_m = \beta_m^{1/2} + \beta_m^{-1/2}$ and $\beta_m$ is the largest root of $x^{m+1}=1+x+\dots+x^{m-1}$. The study of forbidden subgraphs characterization for $\mathcal{F}(\lambda)$ is motivated by the problem of estimating the maximum cardinality of equiangular lines in the $n$-dimensional Euclidean space $\mathbb{R}^n$ --- a family of lines through the origin such that the angle between any pair of them is the same. Denote by $N_\alpha(n)$ the maximum number of equiangular lines in $\mathbb{R}^n$ with angle $\arccos\alpha$. We establish the asymptotic formula $N_\alpha(n) = c_\alpha n + O_\alpha(1)$ for every $\alpha \ge \frac{1}{1+2\lambda^*}$. In particular, $N_{1/3}(n) = 2n+O(1)$ and $N_{1/5}(n), N_{1/(1+2\sqrt{2})}(n) = \frac{3}{2}n+O(1)$. Besides we show that $N_\alpha(n) \le 1.49n + O_\alpha(1)$ for every $\alpha \neq \tfrac{1}{3}, \tfrac{1}{5}, \tfrac{1}{1+2\sqrt{2}}$, which improves a recent result of Balla, Dr\"axler, Keevash and Sudakov.
\end{abstract}

\section{Introduction} \label{intro}

The \emph{spectral radius} of a graph $G$, denoted by $\la_1(G)$, is the largest eigenvalue of its adjacency matrix. Let $\F(\la)$ be the family of connected graphs of spectral radius $\le \la$. It is well known that $\la_1$ is monotone in the sense that $\la_1(G_1) \le \la_1(G_2)$ if $G_1$ is a subgraph of $G_2$, moreover $\la_1(G_1) < \la_1(G_2)$ if $G_1$ is a proper subgraph of a connected graph $G_2$. This implies that $\F(\la)$ is closed under taking connected subgraphs. It is natural to ask if $\F(\la)$ can be defined by a finite set of forbidden subgraphs. We determine the set of $\la$ for which the answer is yes.

\begin{theorem} \label{forb_main}
  For $m = 1, 2, \dots$, let $\beta_m$ be the largest root of $x^{m+1} = 1 + x + \dots + x^{m-1}$, and let $\al_m := \beta_m^{1/2} + \beta_m^{-1/2}$. For every $\la < \la^* := \sqrt{2+\sqrt{5}}$ such that $\la \not\in \sset{\al_2, \al_3, \dots}$, there exist finitely many graphs $G_1, G_2, \dots, G_n$ such that $\F(\la)$ consists exactly of the connected graphs which do not contain any of $G_1, G_2, \dots, G_n$ as a subgraph. However, the same conclusion does not hold for any $\la \in \sset{\al_2,\al_3,\dots}\cup [\la^*,\infty)$.
\end{theorem}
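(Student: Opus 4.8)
The plan is to reduce the theorem to a finiteness statement about the family $\mathcal M(\la)$ of \emph{minimal forbidden subgraphs} --- the connected graphs $G \notin \F(\la)$ all of whose proper connected subgraphs lie in $\F(\la)$. Since $\la_1$ is strictly monotone under passing to proper connected subgraphs, $\F(\la)$ has a finite forbidden-subgraph characterization if and only if $\mathcal M(\la)$ is finite: one may take the forbidden graphs to be the members of $\mathcal M(\la)$, while any finite list of forbidden graphs would have to be contained in arbitrarily large members of $\mathcal M(\la)$ when the latter is infinite, contradicting minimality of those members. The key quantitative input is a bound on $\abs{V(G)}$ for $G \in \mathcal M(\la)$ in terms of $\la_1(G)$. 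Let $x$ be the Perron eigenvector of $G$ with $\sum_v x_v^2 = 1$. Deleting any edge $e=uv$ breaks $G$ into at most two components, each a proper connected subgraph, so $x^\top A_{G-e} x \le \la$; as $x^\top A_{G-e} x = \la_1(G) - 2 x_u x_v$, this yields $x_u x_v \ge (\la_1(G)-\la)/2$ for \emph{every} edge. Every vertex has a neighbour and every coordinate is at most $1$, so $x_v \ge (\la_1(G)-\la)/2$ for all $v$, whence $\abs{V(G)} \le 4/(\la_1(G)-\la)^2$. Therefore $\mathcal M(\la)$ is finite exactly when $\inf\dset{\la_1(G)}{G \in \mathcal M(\la)} > \la$ (with $\inf \emptyset = +\infty$), and the theorem reduces to deciding for which $\la$ this infimum strictly exceeds $\la$.

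For the positive direction, assume $\la < \la^*$ and $\la \notin \sset{\al_2, \al_3, \dots}$, and suppose toward a contradiction that there are $G_k \in \mathcal M(\la)$ with $\la_1(G_k) \to \la$; then $\abs{V(G_k)} \to \infty$ by the size bound, and after passing to a subsequence the $\la_1(G_k)$ strictly decrease, so $\la$ is a limit point of graph spectral radii. For large $k$ we have $\la_1(G_k) < \la^*$, so $G_k$ is covered by the structural description of graphs of spectral radius below $\la^*$ --- open quipus, closed quipus, and finitely many bounded exceptions (in the spirit of Woo and Neumaier's classification of graphs with $\la_1 \le \tfrac32\sqrt2$). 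Since $\abs{V(G_k)} \to \infty$ and the exceptions are bounded, the $G_k$ are eventually quipus with long spine; analysing how $\la_1$ behaves as the spine grows --- localizing the legs (and the cycle, in the closed case) and passing to the appropriate limit --- shows that $\lim_k \la_1(G_k)$ must be one of the limit points of spectral radii below $\la^*$, that is, one of $2, \al_2, \al_3, \dots$. As $\la$ equals this limit and is none of $\al_2, \al_3, \dots$, we are forced to $\la = 2$; but a minimal forbidden subgraph cannot contain a long undecorated path --- deleting a path edge far from all decorations lowers $\la_1$ only negligibly, so it stays above $2$, contradicting minimality --- so there is no infinite minimal family with $\la_1 \downarrow 2$ either. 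This contradiction would establish that $\mathcal M(\la)$ is finite.

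For the negative direction I would, for each $\la$ in the exceptional set, exhibit an explicit infinite antichain $\sset{R_1, R_2, \dots}$ inside $\mathcal M(\la)$. When $\la = \al_m$, a natural building block is the decoration $D_m$ --- a vertex of degree three carrying a pendant leg of length $1$ and a leg of length $m$ --- whose spectral radius ``with an infinite pendant path attached'' equals $\al_m$ (attaching a path of length $n$ and letting $n\to\infty$ gives $\la_1\to\al_m$); joining two copies of $D_m$ by a path $P_j$ and letting $j$ grow produces connected graphs whose spectral radii converge to $\al_m$ --- arranged, after a small perturbation of the gadget if necessary, to lie strictly above $\al_m$ --- and which are pairwise incomparable because the connecting path cannot be shortened inside a connected subgraph. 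The crux is minimality: deleting an edge or vertex either disconnects $R_j$ into sub-decorations carrying finite tails, each of spectral radius below $\al_m$, or shortens a leg (which again drops $\la_1$ below $\al_m$), and one must check that no such operation leaves $\la_1$ above $\al_m$. When $\la \ge \la^*$ the extremal graphs are no longer path-like, and I would instead draw on the two-parameter families underlying Shearer's theorem that every real number $\ge \la^*$ is a limit point of spectral radii, tuning the parameters so that the graphs have spectral radius just above $\la$, are pairwise incomparable, and are minimal. The extra degree of freedom available above $\la^*$, and at $\al_m$ for $m \ge 2$ via the length-$m$ leg, is precisely what makes an infinite antichain of \emph{minimal} graphs possible, in contrast with the rigid situation at $\la = 2 = \al_1$.

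I expect the main obstacle to lie on the negative side: constructing, for every $\la \in \sset{\al_2, \al_3, \dots} \cup [\la^*,\infty)$, families that are simultaneously infinite antichains and minimal, which requires quantitative control --- uniform over the family --- of how far $\la_1$ drops under each possible edge- or vertex-deletion. On the positive side the delicate points are the limit-point analysis for quipus of growing spine and the separate treatment of the boundary value $\la = 2$; the reduction and the size bound of the first paragraph are comparatively routine.
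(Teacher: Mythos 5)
Your first paragraph is correct and is a genuinely different organizing principle from the paper's: the reduction to finiteness of the family $\mathcal{M}(\la)$ of minimal forbidden subgraphs, via the bound $\abs{V(G)}\le 4/(\la_1(G)-\la)^2$, is a clean quantitative form of the paper's Proposition~\ref{right-sided-limit} together with its converse, and the paper has nothing quite like it. The genuine gap is in the positive direction at $\la=2$. Hoffman's theorem lists $2=\al_1$ among the limit points of $\Lambda_1$ below $\la^*$ but says nothing about the side of approach, so you must separately rule out connected graphs with spectral radius accumulating at $2$ from above --- and your argument for this is circular. Deleting an edge $uv$ drops $\la_1$ by at most $2x_ux_v$, so to contradict minimality you need an edge with $2x_ux_v<\la_1(G)-2$; but your own minimality bound says precisely that $2x_ux_v\ge\la_1(G)-2$ for \emph{every} edge, and nothing in "the edge is far from all decorations" forces $x_ux_v$ below that threshold (along a bare path the Perron eigenvector of a graph with $\la_1=2+\eps$ solves $x_{i-1}+x_{i+1}=(2+\eps)x_i$ and is merely constrained, not forced to be small or large, in the middle). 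What is actually needed is that $\Lambda_1\cap(2,2+\delta)=\emptyset$ for some $\delta>0$; the paper extracts this from the explicit structural claim in Case~2 with $m=2$ (every connected graph avoiding $\G_0$ is a path, a cycle, an $E_{m_0,n_0}$, a $B_{m_1,n',m_2}$, or has bounded degree and radius, and all the infinite families near $2$ approach it from below or equal $2$). Your proposed route through a Woo--Neumaier-type classification could in principle recover this, but as sketched the $\la=2$ subcase rests on a step that does not work.

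On the negative side you have inverted the difficulty. By your own reduction it suffices to exhibit, for each bad $\la$, connected graphs with spectral radii strictly decreasing to $\la$: any such graph contains a member of $\mathcal{M}(\la)$ with spectral radius in the same interval, so $\inf\dset{\la_1(G)}{G\in\mathcal{M}(\la)}=\la$ and $\mathcal{M}(\la)$ is infinite. Minimality and the antichain property, which you identify as "the crux" and "the main obstacle," are therefore not needed at all --- neither for $\la=\al_m$ nor for $\la\ge\la^*$, where Shearer's theorem plus the observation that $(\la,\la+\eps)$ contains further limit points of $\Lambda_1$ already suffices. What you do need, and leave unverified (hedging with "after a small perturbation of the gadget if necessary"), is that your two-decoration-plus-growing-path graphs have spectral radius strictly \emph{above} $\al_m$ and decreasing to it. This is exactly the paper's Lemma~\ref{abcdef}(b\textsubscript{2}), proved in Appendix~\ref{app} from Hoffman's limit formula together with his monotonicity-under-subdivision lemma; without that computation the direction of convergence, which is the entire point, is unestablished.
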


The mysterious number $\la^*$ has appeared before in the study of $\F(\la)$. After Smith~\cite{MR0266799} precisely determined the graphs in $\F(2)$, Cvetkovi\'c, Doob and Gutman~\cite{MR683990} classified the graphs in $\F(\la^*)$, which were later completely described by Brouwer and Neumaier~\cite{MR986880}. On the face of it, for $\la < \la^*$, Theorem~\ref{forb_main} may appear to be an immediate consequence of Brouwer and Neumaier's theorem. However, Theorem~\ref{forb_main} is genuinely about the minimal graphs outside $\F(\la)$.

Our motivation to understand the forbidden subgraphs characterization for $\F(\la)$ comes from the problem of estimating the maximum cardinality $N(n)$ of \emph{equiangular lines} in the $n$-dimensional Euclidean space $\R^n$ --- a family of lines through the origin such that the angle between any pair of them is the same. It is considered to be one of the founding problems of algebraic graph theory to determine $N(n)$. The ``absolute bound'' $N(n) \le {{n+1}\choose 2}$ was established by Gerzon (see \cite[Theorem~3.5]{MR0307969}). For a long time, it was an open problem to determine whether $n^2$ is the correct order of magnitude until a remarkable construction of de Caen~\cite{MR1795615} shows that $N(n) \ge \frac{2}{9}(n+1)^2$ for $n$ of the form $n = 6\cdot 4^k-1$ (see \cite{MR3423477} for a generalization and \cite{MR3345298} for an alternative constructions). In these constructions, the common angle tends to $\pi/2$ as dimension grows.

The question of determining the maximum number $N_\al(n)$ of equiangular lines in $\R^n$ with common angle $\arccos\al$ was first raised by Lemmens and Seidel~\cite{MR0307969} in 1973, who showed that $N_{1/3}(n) = 2n-2$ for $n \ge 15$ and also conjectured that $N_{1/5}(n)$ equals $\lfloor{3(n-1)/2}\rfloor$ for $n$ sufficiently large. This conjecture was later confirmed by Neumaier~\cite{MR986870} (see also \cite{MR3423477}).

In the absence of exact formulas for $N_{\al}(n)$ except when $\al = 1/3,1/5$, various upper bounds were established. The ``relative bound'' $N_\al(n) \le \frac{1-\al^2}{1-n\al^2}\cdot n$ is valid in small dimensions $n < 1/\al^2$ (see \cite[Lemma~6.1]{MR0200799} and \cite[Theorem~3.6]{MR0307969}), and a general bound attributed to Neumann~\cite[Theorem~3.4]{MR0307969} states that $N_\al(n)\le 2n$ unless $1/\al$ is an odd integer. For many years, a linear upper bound for $N_\alpha(n)$ was not known when $1/\al$ is an odd integer bigger than $5$. An important progress was recently made by Bukh~\cite{MR3477753}, who proved that  $N_\alpha(n) \le c_\al n$ for some $c_\al = 2^{O(1/\al^2)}$. Subsequently, Balla, Dr\"axler, Keevash and Sudakov~\cite{MR3742757} drastically improved the upper bound to $N_\al(n) \le 1.93 n$ for sufficiently large $n$ relative to $\al$ whenever $\al \neq \tfrac{1}{3}$. A universal upper bound $N_\al(n) \le \left(2/(3\al^2)+4/7\right)n+2$ for all $n\in \N$ when $1/\al$ is an odd integer was later found by Glazyrin and Yu~\cite{MR3787558}.

In this paper, we combine Theorem~\ref{forb_main} and the framework developed in \cite{MR3742757} to establish a result of the form $N_\al(n) = c_\alpha n + O(1)$ for all $\al \ge \frac{1}{1+2\la^*}$. To describe the coefficient $c_\al$, we introduce the following notion.

\begin{definition}
  Given $\la > 0$, the \emph{spectral radius order} $k(\la)$ of $\la$ is the smallest order of a graph with spectral radius $\la$. In case $\la$ is not the spectral radius of any graph, we write $k(\la) = \infty$.
\end{definition}

\begin{theorem} \label{equi_main}
  Given $\al \in (0, 1)$ such that $\la := \frac{1-\al}{2\al} \le \la^* := \sqrt{2 + \sqrt{5}}$, the maximum number $N_\al(n)$ of equiangular lines in $\R^n$ with angle $\arccos\al$ equals $\frac{k(\la)}{k(\la) - 1}\cdot n + O(1)$, where $k(\la)$ is the spectral radius order of $\la$; in case $k(\la) = \infty$, $N_\al(n) = n + O(1)$.
\end{theorem}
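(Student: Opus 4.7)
The plan is to establish matching asymptotic upper and lower bounds for $N_\al(n)$. Throughout, set $k := k(\la)$.

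\textbf{Lower bound.} Assuming $k < \infty$, let $H_0$ be a connected graph on $k$ vertices with $\la_1(H_0) = \la$. I will take the disjoint union $G := tH_0$, a graph on $N := tk$ vertices, and consider
\[
  M := (1-\al) I + \al J - 2\al A_G.
\]
Because $\al = 1/(2\la+1)$, the condition $M \succeq 0$ reduces to $A_G \preceq \la I$, which holds as $\la_1(A_G) = \la$. Hence $M$ is realizable as the Gram matrix of unit vectors $v_1,\dots,v_N \in \R^n$; the identities $\ip{v_i}{v_j} \in \sset{\pm \al}$ for $i \ne j$ then make $\R v_1, \dots, \R v_N$ equiangular lines of angle $\arccos \al$. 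It remains to compute $n := \rank{M}$. The $\la$-eigenspace of $A_G$ is $t$-dimensional (one Perron vector per copy of $H_0$) and meets $\mathbf{1}^\perp$ in dimension at least $t-1$; on this intersection $M$ vanishes. Hence $n \le t(k-1) + 1$, giving $N \ge \frac{k}{k-1}(n-1)$ whenever $n \equiv 1 \pmod{k-1}$, and $N \ge \frac{k}{k-1} n - O(1)$ in general by padding. If $k = \infty$, a slight perturbation of an orthonormal frame gives $N = n$.

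\textbf{Upper bound.} For the converse, choose unit direction vectors $v_1, \dots, v_N$ of the equiangular lines and let $G$ be the graph on $\sset{1, \dots, N}$ with $ij \in E(G)$ iff $\ip{v_i}{v_j} = -\al$ (changing the sign of $v_i$ is a Seidel switching). The Gram matrix is again $M = (1-\al)I + \al J - 2\al A_G$, PSD of rank $n$. For any unit $w \in \mathbf{1}^\perp$, $0 \le w^T M w = (1-\al) - 2\al\, w^T A_G w$, so every eigenvalue of $A_G|_{\mathbf{1}^\perp}$ is at most $\la$. Consequently, at most one connected component of $G$ has Perron eigenvalue exceeding $\la$: otherwise the two-dimensional $A_G$-invariant span of two orthogonal rogue Perron eigenvectors would meet $\mathbf{1}^\perp$ nontrivially and witness an eigenvalue $> \la$. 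Let $q$ be the number of components of $G$ with Perron eigenvalue equal to $\la$. By definition of $k(\la)$, each such component has at least $k$ vertices, so $qk \le N$. A direct calculation shows $\dim\ker M \le q+1$: the subspace $\ker M \cap \mathbf{1}^\perp$ coincides with the $\la$-eigenspace of $A_G$ restricted to $\mathbf{1}^\perp$, of dimension at most $q$, and $\ker M$ extends this by at most one dimension via $w \mapsto \mathbf{1}^T w$. Combining, $N - n = \dim \ker M \le q + 1 \le N/k + 1$, which rearranges to $N \le \frac{k}{k-1} n + O(1)$. If $k = \infty$, then $q \equiv 0$ and $N \le n + 1$.

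\textbf{Main obstacle and role of Theorem~\ref{forb_main}.} The linear-algebraic skeleton above is comparatively short; the real work is ensuring that its hypotheses and conclusions cohere cleanly. In particular, the bound $qk \le N$ is sharp only when $k(\la)$ is truly the minimum order of a $\la$-radial graph, and the constant $\frac{k}{k-1}$ is the right one only when $\F(\la)$ is structurally well-behaved so that the rogue component cannot inflate the count and the residual $O(1)$ is uniform. Theorem~\ref{forb_main} supplies both of these: for $\la < \la^*$ with $\la \notin \sset{\al_2, \al_3, \dots}$, the finite forbidden-subgraph description of $\F(\la)$ pins down the possible components of $G$, secures the combinatorial meaning of $k(\la)$, and underwrites the switching argument. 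The boundary values $\la \in \sset{\al_m} \cup \sset{\la^*}$ require a separate approximation argument (Theorem~\ref{forb_main} fails there), but $k(\la)$ remains well-defined and the same asymptotic persists.
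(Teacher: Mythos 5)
Your lower bound is fine and is essentially the paper's Lemma~\ref{lower_bound} in different clothing. The genuine gap is in the upper bound, at the step ``$\dim\ker M \le q+1$.'' You assert that $\ker M \cap \mathbf{1}^\perp$, which indeed equals the $\la$-eigenspace of $A_G$ intersected with $\mathbf{1}^\perp$, has dimension at most $q$, the number of components whose Perron eigenvalue equals $\la$. That is false in general: $\la$ may also occur as a \emph{non-Perron} eigenvalue of the single ``rogue'' component whose Perron eigenvalue exceeds $\la$. Your PSD constraint only gives $\la_2(A_G)\le\la$; it puts no bound on the multiplicity of $\la$ itself, and that rogue component may be huge and a priori carry $\la$ with multiplicity proportional to its order, so $N-n$ is not bounded by $N/k+1$. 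Controlling exactly this quantity --- that the multiplicity of $\la$ as second largest eigenvalue is at most $v(G)/k(\la)$ for graphs avoiding suitable subgraphs --- is stated as an open conjecture in the paper's concluding section; if your sketch worked it would prove Conjecture~\ref{main_conj} in full, far beyond the theorem at hand. Your appeal to Theorem~\ref{forb_main} (``pins down the possible components,'' ``underwrites the switching argument'') is never made concrete and cannot be: the underlying graph of a $\sset{\pm\al}$-code is not forced, componentwise, into $\F(\la)$, which is precisely why a direct argument on the $\sset{\pm\al}$-code fails.

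What the paper actually does is eliminate the rogue component before counting. Lemma~\ref{balla} (Ramsey plus projection onto the orthogonal complement of a large independent set) replaces the $\sset{\pm\al}$-code $C_\al$ by a spherical $L(\al,t)$-code $C$ with $\abs{C_\al}\le\abs{C}+O(1)$, whose Gram matrix is a controlled perturbation of $I-A/\la$; Lemma~\ref{balla_cor} (Weyl's inequality, with $t$ large) then shows the underlying graph of $C$ contains no member of any prescribed finite family of graphs of spectral radius $>\la$; Theorem~\ref{forb_main} (this is where the finite forbidden-subgraph characterization is genuinely used) forces \emph{every} component into $\F(\la)$; and Corollary~\ref{pf_cor} (Perron--Frobenius: the top eigenvalue is simple, and a component attaining $\la$ has at least $k(\la)$ vertices) together with Proposition~\ref{rank} yields $\abs{C}\le\frac{k(\la)}{k(\la)-1}(n+1)$. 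Finally, the boundary values $\la\in\sset{\al_2,\al_3,\dots}\cup\sset{\la^*}$ are not disposed of by ``a separate approximation argument'': the paper proves they are not totally real algebraic integers (Proposition~\ref{not_real}), hence $k(\la)=\infty$, $\la$ is never an eigenvalue of an adjacency matrix, and a one-line rank computation gives $n\le N_\al(n)\le n+1$ (Proposition~\ref{equi_not_real}). Both of these essential ingredients are missing from your proposal.
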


\begin{remark}
  Throughout, as all the big-O notations depend on $\alpha$, we suppress the subscript in $O_\al(\cdot)$.
\end{remark}

Applying the above theorem to the connected graphs with $2$ or $3$ vertices,
\medskip
\begin{figure}[h]
  \centering
  \begin{tikzpicture}[thick, scale=0.5, baseline=(v.base)]
    \coordinate (v) at (0,0);
    \draw[darkgray] (0,0) node[vertex]{} -- (2,0) node[vertex]{};
  \end{tikzpicture}\qquad
  \begin{tikzpicture}[thick, scale=0.5, baseline=(v.base)]
    \coordinate (v) at (0,0.5);
    \draw[darkgray] (0,0) node[vertex]{} -- (1,1) node[vertex]{} -- (2,0) node[vertex]{};
  \end{tikzpicture}\qquad
  \begin{tikzpicture}[thick, scale=0.5, baseline=(v.base)]
    \coordinate (v) at (0,0.5);
    \draw[darkgray] (0,0) node[vertex]{} -- (1,1) node[vertex]{} -- (2,0) node[vertex]{} -- cycle;
  \end{tikzpicture}%
\end{figure}\\
whose spectral radii are $1, \sqrt{2}$ and $2$ respectively, we obtain the known results $N_{1/3}(n) = 2n + O(1)$, $N_{1/5}(n) = \frac{3}{2}n + O(1)$ and surprisingly a new result $N_{1/(1+2\sqrt{2})}(n) = \frac{3}{2}n + O(1)$. This refutes the second part of Conjecture 6.1 in \cite{MR3742757}. The first part of the conjecture, which was also raised by Bukh~\cite[Conjecture 8]{MR3477753}, says the following.

\begin{conjecture} \label{conj_odd}
  The maximum number $N_\al(n)$ of equiangular lines in $\R^n$ with angle $\arccos\al$ equals $\frac{1+\al}{1-\al}\cdot n + O(1)$ if $1/\al$ is an odd natural number.
\end{conjecture}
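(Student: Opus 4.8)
\medskip
\noindent\textbf{A strategy for Conjecture~\ref{conj_odd}.}
Write $1/\al = 2\la+1$, so that $\la = \tfrac{1-\al}{2\al}$ is a positive integer and the conjectured value $\tfrac{1+\al}{1-\al}\,n+O(1)$ equals $\tfrac{\la+1}{\la}\,n+O(1)$. Since a graph on at most $\la$ vertices has spectral radius $<\la$ while $K_{\la+1}$ is the unique graph on $\la+1$ vertices of spectral radius exactly $\la$, the spectral radius order is $k(\la)=\la+1$ and $\tfrac{\la+1}{\la}=\tfrac{k(\la)}{k(\la)-1}$; thus Conjecture~\ref{conj_odd} is exactly the assertion that Theorem~\ref{equi_main} remains valid for every integer $\la\ge 1$, including in the range $\la\ge\la^*$ that the theorem excludes. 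The lower bound $N_\al(n)\ge\tfrac{\la+1}{\la}\,n-O(1)$ is the routine direction: the Gram matrix $(1+\al)I-\al J$ on $\la+1$ vectors realises one copy of $K_{\la+1}$ as $\la+1$ equiangular lines of angle $\arccos\al$ in $\R^{\la+1}$, and gluing about $n/\la$ such blocks along a common rank-one backbone so that each block loses a dimension produces a positive semidefinite Gram matrix of rank $n+O(1)$ with all off-diagonal entries $\pm\al$; this is the construction behind $N_{1/3}(n)=2n-2$ and $N_{1/5}(n)=\lfloor 3(n-1)/2\rfloor$ and it generalises verbatim.

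The content is the matching upper bound, and the plan is to run the spectral framework of \cite{MR3742757} as far as it goes and then replace, for integer $\la$, the ingredient that Theorem~\ref{forb_main} provides when $\la<\la^*$. Given $N:=N_\al(n)$ unit vectors with pairwise inner products $\pm\al$, their Gram matrix is $I+\al S$ for a Seidel matrix $S$, and $\rank{I+\al S}\le n$ forces $-1/\al$ to be an eigenvalue of $S$ of multiplicity at least $N-n$; equivalently, after switching $S$ to a graph $G$ with adjacency matrix $A$, the matrix $M:=2(\la I-A)+J$ is positive semidefinite with $\dim\ker M\ge N-n$. Because $J$ has rank one, $2(\la I-A)$ has at most one negative eigenvalue, so at most one component of $G$ has spectral radius exceeding $\la$ and every other component lies in $\F(\la)$; moreover $\dim\ker M\le\dim\ker(\la I-A)+1$, which is at most $1$ plus the number of components $H$ with $\la_1(H)=\la$ plus the multiplicity of $\la$ as an eigenvalue of the single exceptional component. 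An efficiency count then finishes the argument: a component with $\la_1(H)=\la$ has at least $\la+1$ vertices, with equality only for $K_{\la+1}$; a component with $\la_1(H)<\la$ contributes nothing to $\ker M$ and can only pull the ratio $N/n$ toward $1$; hence if $N/n$ is close to $\tfrac{\la+1}{\la}$ then, after discarding $O(1)$ vertices, $G$ is a disjoint union of copies of $K_{\la+1}$, whence $\la N\le(\la+1)n+O(1)$.

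The hard part---and the reason Theorem~\ref{equi_main} halts at $\la^*$---is turning the phrases ``at most $1$ plus \dots'' and ``discarding $O(1)$ vertices'' into honest additive constants, i.e.\ bounding the size of the exceptional component, the number of ``borderline'' vertices, and the interface between blocks after switching. For $\la<\la^*$ with $\la\notin\sset{\al_2,\al_3,\dots}$ this is precisely what Theorem~\ref{forb_main} buys: membership in $\F(\la)$ is decided by finitely many forbidden subgraphs, which caps all of these quantities uniformly. For integer $\la\ge 3$, which lies beyond $\la^*$, the family $\F(\la)$ has arbitrarily large minimal obstructions---long subdivided paths, long cycles, caterpillars---so a priori a near-extremal configuration could be assembled from an exotic infinite family of components nearly as efficient as $K_{\la+1}$ yet distinct from it, and the framework gives no bound. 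The proposed way out is to exploit that every pathological member of $\F(\la)$ is \emph{sparse}, with average degree close to $2$, whereas $K_{\la+1}$ is $\la$-regular: one would aim to prove directly a stability lemma bounding, in terms of $n$ alone, the number of vertices of $G$ lying in components other than $K_{\la+1}$, using only $M\succeq 0$, $\dim\ker M\ge N-n$, and the integrality of the eigenvalue $-1/\al=-(2\la+1)$ of $S$---in effect, that near-optimal configurations are forced into the $K_{\la+1}$-decomposition without any finiteness input. Establishing this lemma is where essentially all the difficulty lies and is the step I expect to be the true obstacle; the rest is bookkeeping identical to the $\la\le\la^*$ case handled by Theorems~\ref{forb_main} and~\ref{equi_main}.
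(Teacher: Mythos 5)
The statement you are addressing is Conjecture~\ref{conj_odd}, which the paper does not prove and explicitly leaves open (it is the first half of Conjecture~6.1 of Balla, Dr\"axler, Keevash and Sudakov, also raised by Bukh); the paper only verifies its prediction for $1/\al = 3, 5$ (i.e.\ $\la = 1, 2 < \la^*$) via Theorem~\ref{equi_main}, and for $1/\al = 7$ it proves merely the weaker bound $N_{1/7}(n)\le(\tfrac43+\tfrac1{36}+o(1))n$ of Theorem~\ref{upper_bound}. Your proposal is likewise not a proof: you reduce the conjecture to a ``stability lemma'' and then, by your own account, leave that lemma unproven. So the submission has a genuine gap, and it sits exactly where you locate it --- but it is worth being precise about why your intended route around the gap does not work as sketched.

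The rank-one perturbation argument ($M = (1-\al)(I-A/\la)+\al J\succeq 0$, $\rank{J}=1$) controls only how often eigenvalues of $A$ \emph{exceed} $\la$: at most one component has $\la_1 > \la$, and $\la_1$ is exceeded with total multiplicity at most one. It gives no control whatsoever on the multiplicity of $\la$ itself inside that one exceptional component, where $\la$ may occur as the \emph{second} largest eigenvalue with multiplicity linear in the component's order; such a component single-handedly supplies the kernel of $M$ that your efficiency count needs to exclude, and your appeal to the sparseness of pathological members of $\F(\la)$ is beside the point there, since the exceptional component is not in $\F(\la)$ at all. This is precisely why the paper does not work with the raw $\sset{\pm\al}$-code but first passes, via Lemma~\ref{balla} and Lemma~\ref{balla_cor}, to an $L(\al,t)$-code whose underlying graph contains no subgraph of spectral radius $>\la$, so that \emph{every} component lies in $\F(\la)$ and Perron--Frobenius (Corollary~\ref{pf_cor}) caps the multiplicity of $\la$ at one per component of order $\ge k(\la)$; and it is why that step needs the finite forbidden-subgraph characterization of Theorem~\ref{forb_main}, which fails for every $\la\ge\la^*$, in particular for every integer $\la\ge 3$. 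The statement your strategy ultimately requires --- that after forbidding finitely many subgraphs the multiplicity of $\la$ as a second eigenvalue of a connected graph $G$ is at most $v(G)/k(\la)$ --- is essentially the paper's concluding conjecture, i.e.\ a recognized open problem, not a bookkeeping step. (Note also that the cheap algebraic-degree bound of Proposition~\ref{alg} degenerates here, since $\deg(\la)=1$ for integer $\la$.) The lower-bound half of your sketch is fine and matches Lemma~\ref{lower_bound}.
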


When $1/\al$ is an odd natural number, or equivalently $\la = \frac{1-\al}{2\al}$ is a natural number, the complete graph on $\la + 1$ vertices is the smallest graph with spectral radius $\la$, hence $k(\la) = \la + 1$. In this case, the linear coefficient $\frac{k(\la)}{k(\la)-1}$ in Theorem~\ref{equi_main} matches the linear coefficient $\frac{1+\al}{1-\al}$ in Conjecture~\ref{conj_odd}. This phenomenon motivates the following stronger conjecture.

\begin{conjecture} \label{main_conj}
  The maximum number $N_\al(n)$ of equiangular lines in $\R^n$ with angle $\arccos\al$ equals $\frac{k(\la)}{k(\la)-1}\cdot n + O(1)$, where $k(\la)$ is the spectral radius order of $\la := \frac{1-\al}{2\al}$; in case $k(\la) = \infty$, $N_\al(n)=n+O(1)$.
\end{conjecture}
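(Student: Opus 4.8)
\emph{Reformulation.} A set of $N$ equiangular lines in $\R^n$ at angle $\arccos\al$ is a set of unit vectors $v_1,\dots,v_N$ with $\ip{v_i}{v_j}\in\{+\al,-\al\}$ for $i\neq j$; modulo flipping the signs of individual vectors (Seidel switching) this datum is a switching class of a graph $G$ on $N$ vertices, and, with the convention that the edges of $G$ record the value $-\al$, the Gram matrix equals $2\al(\la I+\tfrac12 J-A(G))$, where $A(G)$ is the adjacency matrix and $J$ the all-ones matrix. Since this matrix is positive semidefinite of rank at most $n$, two things follow: restricted to the hyperplane orthogonal to the all-ones vector, $A(G)-\la I$ is negative semidefinite, i.e.\ the second largest eigenvalue satisfies $\la_2(G)\le\la$; and the nullity of the Gram matrix is at least $N-n$, while that nullity is at most $1+\operatorname{mult}_{A(G)}(\la)$, the multiplicity of $\la$ as an eigenvalue of $A(G)$ plus one. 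Hence $N-n\le 1+\operatorname{mult}_{A(G)}(\la)$, and the theorem reduces to showing, for graphs $G$ with $\la_2(G)\le\la$ arising from equiangular lines, that $\operatorname{mult}_{A(G)}(\la)$ equals $\tfrac{1}{k(\la)}N+O(1)$ both from above and from below.

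\emph{Lower bound.} Fix a connected graph $H$ on $k:=k(\la)$ vertices with $\la_1(H)=\la$ (when $k(\la)=\infty$ take the one-vertex graph instead). For $m\ge1$ and $0\le r\le k-2$, let $G$ be the disjoint union of $m$ copies of $H$ and $r$ isolated vertices, so $N=mk+r$. Weyl's inequality gives $\la_{\min}(J-I-2A(G))\ge -1-2\la_1(A(G))=-1-2\la=-1/\al$, so the Gram matrix $I+\al(J-I-2A(G))$ is positive semidefinite; since $\la$ is a simple eigenvalue of each copy of $H$ with positive Perron eigenvector, one checks that its kernel is exactly the $(m-1)$-dimensional space of vectors which are a scalar multiple of the Perron eigenvector on each copy, zero on the remaining vertices, and whose $m$ scalars sum to zero. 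Thus $N=mk+r$ equiangular lines are realized in dimension $n=m(k-1)+r+1$; letting $m$ grow while $r$ runs over all residues modulo $k-1$ gives $N_\al(n)\ge\tfrac{k(\la)}{k(\la)-1}n-O(1)$ for every $n$, and when $k(\la)=\infty$ the same construction with $H$ a single vertex realizes $n$ lines in $\R^n$, so $N_\al(n)\ge n-O(1)$.

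\emph{Upper bound.} It remains to prove $\operatorname{mult}_{A(G)}(\la)\le\tfrac{1}{k(\la)}N+O(1)$ with the $O(1)$ depending only on $\al$. Here I would invoke the pillar framework of Balla, Dr\"axler, Keevash and Sudakov~\cite{MR3742757}: after an appropriate switching, the vertices of $G$ split into a large base together with a bounded number of pillars. Combining this with the structural information that $\la_2(G)\le\la\le\la^*$ --- precisely the point where Theorem~\ref{forb_main} enters, or, at the exceptional values $\la\in\sset{\al_2,\al_3,\dots}\cup\sset{\la^*}$, the explicit description of $\F(\la^*)$ by Brouwer and Neumaier --- one shows that after deleting a set $W$ of $O(1)$ vertices, $G-W$ is a disjoint union of connected graphs $H_1,\dots,H_t$ each lying in $\F(\la)$, i.e.\ with $\la_1(H_i)\le\la$. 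On such a graph the $\la$-eigenspace is transparent: a component with $\la_1(H_i)<\la$ has no eigenvalue equal to $\la$, while one with $\la_1(H_i)=\la$ contributes a single (simple, Perron) $\la$-eigenvector and has at least $k(\la)$ vertices by the definition of the spectral radius order, so at most $N/k(\la)$ components are of this kind; Cauchy interlacing between $A(G)$ and $A(G-W)$ then gives $\operatorname{mult}_{A(G)}(\la)\le N/k(\la)+O(|W|)=N/k(\la)+O(1)$. Plugging this into $N-n\le1+\operatorname{mult}_{A(G)}(\la)$ and solving for $N$ yields $N\le\tfrac{k(\la)}{k(\la)-1}n+O(1)$; and when $k(\la)=\infty$ no $H_i$ can have spectral radius $\la$, so $\operatorname{mult}_{A(G)}(\la)=O(1)$ and $N\le n+O(1)$.

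\emph{Main obstacle.} The one substantial step is the structural claim in the previous paragraph --- that an equiangular-line graph with $\la_2(G)\le\la$ is, after switching and deletion of $O_\al(1)$ vertices, a disjoint union of members of $\F(\la)$, in particular controlling the base and excluding a large ``dense'' component that survives every switching. This is where the finite forbidden-subgraph characterization of $\F(\la)$ (or, at the boundary values, the Brouwer--Neumaier list) is indispensable, and where the hypothesis $\la\le\la^*$ cannot be dropped: for $\la>\la^*$ the family $\F(\la)$ has infinitely many minimal obstructions, the pillars need not assemble into pieces of spectral radius $\le\la$, and no asymptotic of the form $c_\al n+O_\al(1)$ is known.
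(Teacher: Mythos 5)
The statement you are proving is Conjecture~\ref{main_conj}, which the paper leaves open: the paper only establishes it when $\la=\frac{1-\al}{2\al}\le\la^*$ (Theorem~\ref{equi_main}), plus the degenerate cases where $\la$ is not totally real or not the largest of its conjugates. Your proposal does not close this gap. The entire upper-bound step hinges on the hypothesis $\la\le\la^*$ --- you need either the finite forbidden-subgraph characterization of $\F(\la)$ from Theorem~\ref{forb_main} or the Brouwer--Neumaier description of $\F(\la^*)$ to get the decomposition of $G$ (after switching and deleting $O_\al(1)$ vertices) into components of spectral radius $\le\la$ --- and you concede in your final paragraph that for $\la>\la^*$ no such control, and indeed no asymptotic $c_\al n+O_\al(1)$, is available. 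So at best you have sketched a proof of Theorem~\ref{equi_main}, not of the conjecture. Moreover, even in the range $\la\le\la^*$ the pivotal claim of your third paragraph is asserted rather than proved: the route you take --- bound $\operatorname{mult}_{A(G)}(\la)$ for graphs with $\la_2(G)\le\la$ avoiding finitely many subgraphs by $N/k(\la)+O(1)$ --- is precisely the connection that the paper itself isolates as an \emph{open} conjecture in its concluding section; invoking ``the pillar framework'' does not supply the missing argument that a bounded set $W$ of vertices can be deleted so that every component of $G-W$ lies in $\F(\la)$, nor that the base/pillar structure interacts correctly with the $\la$-eigenspace.

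It is also worth noting that the paper's actual proof of the $\la\le\la^*$ case takes a different and more robust route than the one you sketch: rather than working with the second eigenvalue and the multiplicity of $\la$ in the switching-class graph $G$, it first passes from the $\{\pm\al\}$-code to a spherical $L(\al,t)$-code by projecting off a large independent set (Lemma~\ref{balla}), shows via Weyl's inequality that the underlying graph of that code contains no member of a chosen finite family of spectral radius $>\la$ (Lemma~\ref{balla_cor} with Theorem~\ref{forb_main}), and then counts dimensions component-by-component through $\rank{I-A/\la}$ using Perron--Frobenius (Corollary~\ref{pf_cor}, Proposition~\ref{rank}); the exceptional values $\la\in\{\al_2,\al_3,\dots,\la^*\}$ are handled not by the Brouwer--Neumaier list but by showing these numbers are not totally real, so $k(\la)=\infty$ and a direct rank argument applies (Propositions~\ref{equi_not_real} and~\ref{not_real}). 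Your reformulation via $\la_2(G)\le\la$ and the nullity bound $N-n\le 1+\operatorname{mult}_{A(G)}(\la)$, as well as your lower-bound construction, are correct and match the paper's Lemma~\ref{lower_bound} and Proposition~\ref{alg}; but the upper bound as you present it rests on an unproved structural statement and, in any case, cannot cover the full range of $\al$ that the conjecture addresses.
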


One of the unknown cases that are not addressed by Theorem~\ref{equi_main} is $\al = 1/7$. Conjecture~\ref{conj_odd} asserts that $N_{1/7}(n) = \frac{4}{3}n+O(1)$. For such cases, we derive an asymptotic upper bound on $N_{\al}(n)$ (see Theorem~\ref{upper_bound}), from which it follows that $N_{1/7}(n) \le \left(\frac{4}{3}+\frac{1}{36}+o(1)\right)n$, and $N_\al(n) \le 1.49n+O(1)$ for $\al \neq 1/3, 1/5, 1/(1+2\sqrt{2})$.

The rest of the paper is organized as follows. In Section~\ref{forb}, we prove Theorem~\ref{forb_main}. In Section~\ref{framework}, we adapt the framework developed by Balla et al. In Section~\ref{3.2}, we apply Theorem~\ref{forb_main} to estimate $N_\al(n)$ when $\la = \frac{1-\al}{2\al} \in (0, \la^*)\setminus\sset{\al_2,\al_3,\dots}$. In Section~\ref{3.3} we complete the proof of Theorem~\ref{equi_main}. In Section~\ref{3.4}, we extrapolate our method to obtain an upper bound on $N_\al(n)$ when $\la > \la^*$. In the concluding section we discuss evidences supporting Conjecture~\ref{main_conj} and a possible extension of our method.

\section{Forbidden subgraphs of $\F(\la)$} \label{forb}

Suppose there is a finite forbidden subgraphs characterization, say $G_1, G_2, \dots, G_n$, for $\F(\la)$. The monotonicity of spectral radius implies that no connected graph has spectral radius in the open interval $(\la, \min\dset{\la_1(G_i)}{i\in[n]})$. Let $\Lambda_1$ consist of the spectral radii of all connected graphs of all orders. The contrapositive of the above observation says the following.

\begin{proposition} \label{right-sided-limit}
  Let $\lim_+\Lambda_1 := \dset{\la \in \R}{(\la, \la + \eps)\cap \Lambda_1 \neq \emptyset \text{ for all }\eps > 0}$ be the set of right-sided limit points of $\Lambda_1$. The family $\F(\la)$ does not have a finite forbidden subgraphs characterization for all $\la \in \lim_+\Lambda_1$.
\end{proposition}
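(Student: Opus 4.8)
The plan is to argue by contradiction, extracting the full strength of the monotonicity remark that precedes the proposition. Suppose that $\la \in \lim_+\Lambda_1$ but, contrary to the claim, $\F(\la)$ does admit a finite forbidden subgraphs characterization $G_1, G_2, \dots, G_n$, so that $\F(\la)$ consists precisely of the connected graphs containing none of $G_1, \dots, G_n$ as a subgraph. I would first note that the family cannot be empty: the stars $K_{1,k}$ have spectral radius $\sqrt{k}\to\infty$, so there are connected graphs of spectral radius exceeding the (finite) number $\la$, whence $\F(\la)$ is a proper subfamily of the connected graphs and $n \ge 1$. Set $\mu := \min_{i\in[n]}\la_1(G_i)$. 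Since each $G_i$ is a forbidden graph, $G_i \notin \F(\la)$, i.e.\ $\la_1(G_i) > \la$ for every $i$, and therefore $\mu > \la$.

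Next I would apply the hypothesis $\la \in \lim_+\Lambda_1$ with the choice $\eps := \mu - \la > 0$: by definition of the set of right-sided limit points, $(\la, \mu)\cap\Lambda_1 \neq \emptyset$, so there exists a connected graph $H$ with $\la_1(H) \in (\la, \mu)$. Because $\la_1(H) > \la$, the graph $H$ is not in $\F(\la)$, and hence by the assumed characterization $H$ must contain some $G_i$ as a subgraph. Monotonicity of the spectral radius under taking subgraphs then gives $\la_1(H) \ge \la_1(G_i) \ge \mu$, which contradicts $\la_1(H) < \mu$. This contradiction shows no finite forbidden subgraphs characterization can exist, proving the proposition.

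I do not anticipate a genuine obstacle here; the statement is essentially the contrapositive of the observation recorded just before it, and the only points demanding a little care are bookkeeping ones: (i) ruling out the degenerate possibility $n = 0$ so that $\mu = \min_{i\in[n]}\la_1(G_i)$ is well defined (handled by the unboundedness of spectral radii and the finiteness of $\la$); and (ii) feeding the definition of $\lim_+\Lambda_1$ exactly the gap $\eps = \mu - \la$, so that the produced graph $H$ satisfies $\la < \la_1(H) < \mu$ with both inequalities strict — the left strict inequality is what forces $H \notin \F(\la)$, and the right strict inequality is what clashes with $\la_1(H) \ge \mu$.
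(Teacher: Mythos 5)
Your argument is correct and is essentially the paper's own: the paper records exactly this observation (a finite characterization $G_1,\dots,G_n$ forces the interval $(\la,\min_i\la_1(G_i))$ to be disjoint from $\Lambda_1$ by monotonicity) in the paragraph preceding the proposition and then states the proposition as its contrapositive. You have merely recast it as a proof by contradiction and added the harmless bookkeeping about $n\ge 1$; there is nothing further to fix.
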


Hoffman was interested in a related set $R$ consisting of the largest eigenvalues of all symmetric matrices of all orders with non-negative integer entries, and he proved the following theorem on the limit points of $R$.

\begin{theorem}[Hoffman~\cite{MR0347860}] \label{hoffman}
  Let $\al_m$ be defined as in Theorem~\ref{forb_main}. Then $2 = \al_1 < \al_2 < \dots$ are all the limit points of $R$ smaller than $\lim_m \al_m = \lambda^*$.
\end{theorem}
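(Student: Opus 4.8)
Theorem~\ref{hoffman} is due to Hoffman, so one route is simply to quote \cite{MR0347860}; to reprove it I would argue in two halves: (i) each $\al_m$ is a limit point of $R$, and (ii) $R$ has no other limit point below $\la^*$. A routine reduction (see \cite{MR0347860}) shows that loops and multiple edges add no limit point of $R$ below $\la^*$ beyond $\al_1 = 2$, so one may work with connected simple graphs throughout.

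For (i), fix $m$ and let $T_m^{(n)}$ be the tree with a single vertex of degree three whose three pendant paths have $1$, $m$ and $n$ edges (so $T_1^{(n)}$ is a path with one pendant edge, $T_2^{(4)} = E_8$ and $T_2^{(5)} = \widetilde{E}_8$). Since $T_m^{(n)}$ is a proper connected subgraph of $T_m^{(n+1)}$, the numbers $\la_1(T_m^{(n)})$ are strictly increasing in $n$ and bounded above by $3$ (the trees are subcubic), so they converge to a limit $L_m \ne \la_1(T_m^{(n)})$; hence $L_m$ is a limit point of $R$. To evaluate $L_m$, expand the characteristic polynomial $\phi_n$ of $T_m^{(n)}$ along the long leg: $\phi_n = x\phi_{n-1} - \phi_{n-2}$. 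Writing $x = t + t^{-1}$ with $t \ge 1$ gives $\phi_n = A(x)\,t^n + B(x)\,t^{-n}$, and the largest root of $\phi_n$ tends, as $n \to \infty$, to the largest root of $A$. Computing $A$ from the two seed polynomials (contributed by the legs of lengths $1$ and $m$) and substituting $\beta = t^2$ turns the equation $A(x) = 0$ into $\beta^{m+1} = 1 + \beta + \dots + \beta^{m-1}$, so $L_m = \beta_m^{1/2} + \beta_m^{-1/2} = \al_m$. Finally, $2 = \al_1 < \al_2 < \cdots$ follows by comparing the polynomials $p_m(x) := x^{m+1} - (1 + x + \dots + x^{m-1})$ on $(1,\infty)$, and $\al_m \to \la^*$ because $\beta_m \to \phi$, the positive root of $x^2 = x + 1$, while $\phi^{1/2} + \phi^{-1/2} = \sqrt{2 + \sqrt 5} = \la^*$ (using $\phi + \phi^{-1} = \sqrt 5$).

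For (ii), let $G_1, G_2, \dots$ be distinct connected graphs with $\la_1(G_k) \to L < \la^*$; we must show $L \in \sset{\al_1, \al_2, \dots}$. Since there are finitely many graphs of each order, $\abs{V(G_k)} \to \infty$. The crux is a structural dichotomy: a connected graph of spectral radius $< \la^*$ on sufficiently many vertices is a \emph{doubly capped path}, obtained from a long path by attaching decorations of bounded size near its two ends, and there are only finitely many such shapes $D$. Granting this, a subsequence of the $G_k$ has a common shape $D$, so $G_k = D(n_k)$ with $n_k \to \infty$, and the pendant-path computation of half (i) --- another $x = t + t^{-1}$ substitution and another characteristic-equation calculation --- shows $\la_1(D(n_k))$ converges to one of the $\al_m$; since the $G_k$ are distinct and tend to $L$, we get $L = \al_m$. (Here $\al_1 = 2$ arises as $\lim_n \la_1(P_n)$.)

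The structural dichotomy is the only genuinely hard step, and it is exactly where $\la^*$ is the threshold: at $\la^*$ one can iterate a decoration along the path (the $E_8 \to \widetilde{E}_8 \to \cdots$ phenomenon), producing honest two-parameter families that defeat the ``bounded caps'' conclusion. I would establish it by the usual subgraph-monotonicity arguments --- a connected graph of spectral radius $< \la^*$ has maximum degree $\le 3$, at most two vertices of degree $3$, no cycle, and is a path between any two vertices of degree $3$ --- each step being certified by a small explicit graph (a subdivided $K_{1,4}$, a pair of degree-$3$ vertices armed by long paths, the Cvetkovi\'c--Doob--Gutman graphs of spectral radius $\la^*$, and so on) whose spectral radius already meets or exceeds $\la^*$, together with the Hoffman--Smith lemma on how $\la_1$ changes when an internal edge is subdivided. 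Once the finite list of admissible shapes is fixed, what remains is the routine recursion of half (i).
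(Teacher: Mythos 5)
The paper does not actually prove this theorem---it is quoted from Hoffman \cite{MR0347860}---so there is no internal proof to compare against; the closest material is Appendix~\ref{app}, where the part-(i)-style computations are carried out by exactly your method (the substitution $x=\theta+\theta^{-1}$, Lemma~\ref{lim0}/Remark~\ref{lim1}, and the reduction of $\lim_n\la_1(E_{m,n})=\al_m$ to $\beta^{m+1}=1+\beta+\dots+\beta^{m-1}$). Your half (i) is correct and matches that appendix, modulo the small caveat that the ``largest root of $A$'' argument needs the limit to exceed $2$ so that $t>1$ (the hypothesis in Lemma~\ref{lim0}); your identification $\al_m\to\la^*$ via $\beta_m\to\phi$ is also right.

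Half (ii) contains a genuine error: the claim that a connected graph of spectral radius $<\la^*$ on many vertices is a path with \emph{bounded} caps near its two ends, of which there are \emph{finitely many} shapes. This cannot be true, and it contradicts your own half (i): finitely many one-parameter families produce finitely many limit points, whereas the theorem asserts the infinite set $\al_2<\al_3<\cdots$ of limit points below $\la^*$, each realized by a distinct family $E_{m,\cdot}$ whose decoration (a single pendant vertex) sits at distance $m$ from an end with $m$ unbounded. The correct structure theorem (Brouwer--Neumaier) yields genuinely \emph{multi}-parameter families such as $E_{m,n}$ and $B_{m_1,n,m_2}$, so after passing to a subsequence with a common combinatorial type you must still argue about which of the several parameters tend to infinity: if only one does, the limit is some $\al_m$ (or $2$); if two or more do, the limit is at least $\la^*$ (this is the content of Lemma~\ref{abcdef}(b\textsubscript{2}),(e),(f) and Hoffman's Proposition~4.2, quoted as Lemma~\ref{lim2} in the paper). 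That extra diagonal step is not optional---it is where the accumulation of the $\al_m$ at $\la^*$ is actually controlled. Two smaller slips: a connected graph of spectral radius $<\la^*$ may be a cycle (so ``no cycle'' should be ``a cycle or a tree''), and the admissible two-branch-vertex trees are not arbitrary doubly capped paths but those in which each degree-$3$ vertex is adjacent to a leaf.
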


In fact, Hoffman proved the above theorem by first showing \cite[Proposition~2.1]{MR0347860} that $\Lambda_1 = R$. He also computed the limit of spectral radii for several families of graphs. We compile some of his computation and other relevant results in the following lemma, the proof of which is presented in Appendix~\ref{app}. We use the notation $\al_n \upto \al$ if $\al_1 < \al_2 < \dots$ and $\lim_n \al_n = \al$, and $\al_n \downto \al$ if $\al_1 > \al_2 > \dots$ and $\lim_n \al_n = \al$.

\begin{lemma} \label{abcdef}
  Let $\al_m$ be defined as in Theorem~\ref{forb_main}. Denote by $C_n$ the cycle with $n$ vertices, $P_n$ the path with $n$ vertices and $S_n$ the star with $n$ leaves. Define $A_n, B_{m_1,n,m_2}, D_n, E_{m,n}, F_n$ as below. The spectral radii of these graphs satisfy:
  (a) $\la_1(A_n)\upto 3/\sqrt{2}$\footnote{It was mistakenly asserted that $\la_1(A_n)\upto 4/\sqrt{3}$ in \cite{MR0347860}. However it will not affect the main result of \cite{MR0347860} as the correct limit $3/\sqrt{2}$ is still $> \la^*$.},
  (b\textsubscript{1}) $\la_1(B_{1,n,1}) = 2$,
  (b\textsubscript{2}) $\la_1(B_{m_1,n,m_2}) \downto \max(\al_{m_1}, \al_{m_2})$ for fixed $(m_1, m_2) \neq (1,1)$,
  (c) $\la_1(C_n) = 2$,
  (d) $\la_1(D_n) \downto \lambda^*$,
  (e) $\la_1(E_{m,n}) \upto \al_m$ for fixed $m$,
  (f) $\la_1(F_n) \upto \la^*$,
  (p) $\la_1(P_n) = 2\cos(\tfrac{\pi}{n+1}) \upto 2$,
  (s) $\la_1(S_n) = \sqrt{n}$.
  \begin{figure}[h]
    \centering
    \begin{tikzpicture}[thick, scale=0.25, baseline=(v.base)]
      \draw[darkgray] (-3,0) -- +(120:2) node[vertex]{};
      \draw[darkgray] (-3,0) -- +(-120:2) node[vertex]{};
      \draw[darkgray] (-5,0) node[vertex]{} -- (-3,0);
      \draw[darkgray] (-3,0) node[vertex]{} -- (-1,0);
      \draw[darkgray] (-1,0) node[vertex]{} -- (1,0);
      \draw[darkgray] (1,0) node[vertex]{} -- (3,0);
      \draw[darkgray] (3,0) node[vertex]{} -- (5,0) node[vertex]{};
      \draw[darkgray] (5,0) node[vertex]{} -- (7,0) node[vertex]{};
      \draw (-6, 0) node[left] (v) {$A_n = $};
      \draw [braket] (7,0) -- (-1,0) node [black,midway,yshift=-15pt] {\footnotesize $n$};
    \end{tikzpicture}\quad
    \begin{tikzpicture}[thick, scale=0.25, baseline=(v.base)]
      \draw[darkgray] (3,0) -- +(0,2) node[vertex]{};
      \draw[darkgray] (-5,0) node[vertex]{} -- (-3,0);
      \draw[darkgray] (-3,0) node[vertex]{} -- (-1,0);
      \draw[darkgray] (-1,0) node[vertex]{} -- (1,0);
      \draw[darkgray] (13,0) -- +(0,2) node[vertex]{};
      \draw[darkgray] (1,0) node[vertex]{} -- (3,0) node[vertex]{} -- (5,0) node[vertex]{} -- (7,0) node[vertex]{} -- (9,0) node[vertex]{} -- (11,0) node[vertex]{} -- (13,0) node[vertex]{} -- (15,0) node[vertex]{} -- (17,0) node[vertex]{} -- (19,0) node[vertex]{} -- (21,0) node[vertex]{};
      \draw (-6, 0) node[left] (v) {$B_{m_1,n,m_2} = $};
      \draw [braket] (1,0) -- (-5,0) node [black,midway,yshift=-15pt] {\footnotesize $m_1$};
      \draw [braket] (11,0) -- (5,0) node [black,midway,yshift=-15pt] {\footnotesize $n$};
      \draw [braket] (21,0) -- (15,0) node [black,midway,yshift=-15pt] {\footnotesize $m_2$};
    \end{tikzpicture}\quad
    \begin{tikzpicture}[thick, scale=0.3, baseline=(v.base)]
      \draw[darkgray] (-5,0) node[vertex]{} -- (-3,0) node[vertex]{} -- (-1,2) node[vertex]{} -- (0.3,1.3) node[vertex]{} -- (1,0) node[vertex]{} -- (0.3,-1.3) node[vertex]{} -- (-1,-2) node[vertex]{} -- (-3,0);
      \draw (-6, 0) node[left] (v) {$D_n = $};
      \draw [decorate,decoration={brace,amplitude=10pt},xshift=10pt,yshift=0pt] (1,2) -- (1,-2) node [black,midway,xshift=15pt] {\footnotesize $n$};
    \end{tikzpicture}\quad
    \begin{tikzpicture}[thick, scale=0.25, baseline=(v.base)]
      \draw[darkgray] (-1,0) node[vertex]{} -- (1,0);
      \draw[darkgray] (7,0) -- +(0,2) node[vertex]{};
      \draw[darkgray] (1,0) node[vertex]{} -- (3,0) node[vertex]{} -- (5,0) node[vertex]{} -- (7,0) node[vertex]{} -- (9,0) node[vertex]{} -- (11,0) node[vertex]{} -- (13,0) node[vertex]{} -- (15,0) node[vertex]{};
      \draw (-2, 0) node[left] (v) {$E_{m,n} = $};
      \draw [braket] (5,0) -- (-1,0) node [black,midway,yshift=-15pt] {\footnotesize $m$};
      \draw [braket] (15,0) -- (9,0) node [black,midway,yshift=-15pt] {\footnotesize $n$};
    \end{tikzpicture}\quad
    \begin{tikzpicture}[thick, scale=0.25, baseline=(v.base)]
      \coordinate (a) at (135:2);
      \coordinate (b) at (-135:2);
      \draw[darkgray] (0,0) -- (a) node[vertex]{} -- ($(a)-(2,0)$) node[vertex]{};
      \draw[darkgray] (0,0) -- (b) node[vertex]{} -- ($(b)-(2,0)$) node[vertex]{};
      \draw[darkgray] (0,0) node[vertex]{} -- (2,0) node[vertex]{} -- (4,0) node[vertex]{} -- (6,0) node[vertex]{} -- (8,0) node[vertex]{};
      \draw (-4.5, 0) node[left] (v) {$F_n = $};
      \draw [braket] (8,0) -- (2,0) node [black,midway,yshift=-15pt] {\footnotesize $n$};
    \end{tikzpicture}
  \end{figure}
\end{lemma}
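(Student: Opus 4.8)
The plan is to dispatch the eight families with two classical devices: explicit spectra for the ``homogeneous'' families, and the representation of the characteristic polynomial of a graph carrying a long induced path as a combination of $t^{n}$ and $t^{-n}$ for the families whose spectral radius depends on a length parameter~$n$. The families $P_n$, $C_n$, $S_n$ are immediate: $P_n$ and $C_n$ have the well-known eigenvalues $2\cos\tfrac{k\pi}{n+1}$ and $2\cos\tfrac{2\pi k}{n}$, giving (p) and (c); and $S_n=K_{1,n}$ has a rank-two adjacency matrix with non-zero eigenvalues $\pm\sqrt n$, giving (s). For (b$_1$) one recognises $B_{1,n,1}$ as an extended Dynkin diagram $\widetilde D_\ell$ and appeals to Smith's classification. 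Throughout the remaining cases I will use the sharp form of that classification repeatedly: a connected graph has $\la_1\le 2$ if and only if it is a simply-laced Dynkin or extended Dynkin diagram; hence a connected graph \emph{not} on that short list has $\la_1>2$.

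For each of $A_n,B_{m_1,n,m_2},D_n,E_{m,n},F_n$ the graph $H_n$ is obtained from a fixed core by inserting or appending a path on $n$ vertices, all of degree $2$ in $H_n$. Expanding $\det\bigl(xI-\text{(adjacency matrix of }H_n)\bigr)$ along that path (a Schwenk-type reduction) and substituting $x=t+t^{-1}$ yields
\[
  \phi(H_n;x)\;=\;g(t)\,t^{n}\;+\;g(t^{-1})\,t^{-n},
\]
with $g$ a fixed function of $t$ independent of $n$ (a Laurent polynomial after clearing the factor $t-t^{-1}$); when the inserted path is \emph{internal} --- the cases $B_{m_1,n,m_2}$ and $D_n$ --- a short computation with path polynomials shows $g$ factors, up to harmless factors, as $g=g_{L}\cdot g_{R}$, the two ``boundary'' polynomials $t\,\phi(G_i;x)-\phi(G_i-u_i;x)$ of the one-sided cores, whose roots in $t\ge 1$ are exactly the eigenvalues above $2$ of the one-sided core with a semi-infinite ray attached. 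From this representation both monotonicity and the limit fall out. Monotonicity of $\la_1(H_n)$: for the pendant-path families $A_n,E_{m,n},F_n$ one has $H_n\subsetneq H_{n+1}$, so $\la_1$ strictly increases; for the internal-path families $D_n$ and $B_{m_1,n,m_2}$ with $(m_1,m_2)\neq(1,1)$ one first checks via the Dynkin/Smith dichotomy that every member has $\la_1>2$ --- neither graph is on the list, the decisive point being that $B_{m_1,n,m_2}$ is an extended Dynkin $\widetilde D$ exactly when $m_1=m_2=1$ --- and then the Hoffman--Smith edge-subdivision lemma (or a direct sign analysis of $g$) shows $\la_1$ strictly decreases. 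The limit: for $x>2$ one has $t>1$, the term $g(t)t^{n}$ dominates, and a routine argument gives that $\lim_n\la_1(H_n)$ is the largest value of $x=t+t^{-1}$ with $t\ge 1$ and $g(t)=0$, or $2$ if $g$ has no such root.

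It then remains, family by family, to compute $g$ and locate its largest root $\ge 1$; equivalently, to solve for the positive $\ell^2$ eigenvector of the limiting infinite graph by positing a geometrically decaying eigenvector $y^{-k}$ along the semi-infinite ray(s), with $x=y+y^{-1}$ and $y\ge 1$, and imposing the eigenvalue equations at the finitely many branch vertices, the finite legs being handled by Chebyshev polynomials of the second kind. For $A_n$ this reduces to $xy=3$ together with $x=y+y^{-1}$, i.e.\ $x^{2}=9/2$, giving the limit $3/\sqrt2$. For $D_n$ and $F_n$ it reduces (after eliminating $y$) to $x^{2}=2+\sqrt5=(\la^*)^{2}$. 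For $E_{m,n}$ it reduces, after setting $\beta=y^{2}$, to $\beta^{m+2}=\beta^{m+1}+\beta^{m}-1$, which is precisely $\beta^{m+1}=1+\beta+\dots+\beta^{m-1}$ --- the defining equation of $\beta_m$ --- so the limit is $\beta_m^{1/2}+\beta_m^{-1/2}=\al_m$. Finally, for $B_{m_1,n,m_2}$ the factorisation $g=g_Lg_R$ makes the limit the larger of the two one-sided limits, namely $\max(\al_{m_1},\al_{m_2})$, with $(m_1,m_2)=(1,1)$ the degenerate boundary case $\la_1\equiv 2$ recorded as (b$_1$).

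I expect the only real obstacle to be bookkeeping rather than ideas: setting up the internal-path factorisation $g=g_Lg_R$ cleanly for $B_{m_1,n,m_2}$ and $D_n$ (so that the two ends of $B$ genuinely decouple in the limit), getting the direction of monotonicity and the ``$\la_1>2$'' checks right in each case, and verifying that the ray ansatz for $E_{m,n}$ really produces $x^{m+1}=1+x+\dots+x^{m-1}$ after the substitution --- everything else is routine manipulation with Chebyshev identities and the classical eigenvalue formulas.
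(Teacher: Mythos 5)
Your proposal is correct and follows essentially the same route as the paper's proof in the appendix: explicit eigenvectors/known spectra for $P_n$, $C_n$, $S_n$, $B_{1,n,1}$; Hoffman's limit lemma (your $g(t)t^n+g(t^{-1})t^{-n}$ representation and ray ansatz are exactly its content) for the pendant-path limits; the decoupling of the two ends for internal paths; and subgraph monotonicity plus the Hoffman--Smith subdivision lemma for the monotonicity claims, with matching computed limits $3/\sqrt2$, $\alpha_m$, $\lambda^*$. The only cosmetic differences are that the paper cites Hoffman's lemmas as black boxes rather than rederiving them, and verifies $\lambda_1(B_{1,n,1})=2$ by an explicit eigenvector instead of Smith's classification.
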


The work of finding all the limit points of $\Lambda_1$ was completed by Shearer.

\begin{theorem}[Shearer~\cite{MR986863}] \label{shearer}
  For any $\la \ge \lambda^*$, there exists a sequence of distinct graphs $G_1, G_2, \dots$ such that $\lim\la_1(G_i) = \la$.
\end{theorem}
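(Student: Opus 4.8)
The plan is to reduce Shearer's statement to a density assertion about limit points. First, it suffices to show that every $\la \geq \la^*$ is a \emph{limit point} of $\Lambda_1$: a limit point of $\Lambda_1$ is the limit of infinitely many pairwise distinct values $\la_1(G)$ with $G$ connected, and distinct spectral radii come from non-isomorphic graphs, so this already produces the required sequence. Second, since the set $\Lambda_1'$ of limit points of $\Lambda_1$ is closed, it is enough to exhibit a subset $S \subseteq \Lambda_1'$ that is \emph{dense} in $[\la^*, \infty)$; then $[\la^*,\infty) \subseteq \overline{S} \subseteq \overline{\Lambda_1'} = \Lambda_1'$ and we are done.

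To populate $\Lambda_1'$ one uses nested families: if $G_1 \subsetneq G_2 \subsetneq \cdots$ are connected, then $\la_1(G_N)$ strictly increases to $\sup_N \la_1(G_N)$, which — when finite — is a limit point of $\Lambda_1$. The natural candidates are ``path‑with‑attachments'': fix rooted gadgets and string copies of them along a long path, letting the path length (and, later, the choice and spacing of gadgets) be the parameters. For such families the limiting spectral radius is computable: eliminating a pendant leaf $u$ at $w$ via $x_u = x_w/\la$ turns the spine equation into that of a Jacobi‑type operator, so the limit is the largest $\la$ for which the product of the per‑period transfer matrices (each sending $(x_{j-1},x_j)$ to $(x_j,\,(\la - t_j/\la)x_j - x_{j-1})$) has spectral radius $\le 1$; equivalently, for a single rooted gadget $H$ with root $v$, attaching an infinite path at $v$ yields the limit point $\mu(H,v) =$ the largest root of $p_H(\la) = r(\la)\,p_{H-v}(\la)$, where $p_G$ is the characteristic polynomial of $G$ and $r(\la)\le 1$ solves $r + 1/r = \la$. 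The task then is to choose gadgets so that the attainable limit values are dense in $[\la^*,\infty)$.

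This density step is where the real difficulty — and the number $\la^*$ — lies. Naive one‑ or two‑parameter families are \emph{not} dense. For instance, the caterpillars with $q$ pendant leaves attached to every $p$‑th spine vertex have limiting spectral radius $\nu(q,p)$ decreasing, as $p\to\infty$, to $\sqrt{2+\sqrt{4+q^2}}$ — so $\nu(1,p)\downarrow\sqrt{2+\sqrt5}=\la^*$ — while for small $p$ the values $\nu(q,p)$ jump by $\Omega(1)$ (a lone pendant acts as a localized defect and shifts the spectral radius by a constant independent of the spine length), so $\{\nu(q,p)\}$ leaves gaps and is in fact Cantor‑like. One must instead exploit the full flexibility of the gadgets — arbitrary finite rooted trees, i.e.\ pendant \emph{paths} of varying length, subdivisions and branchings — so that the pairs of integer polynomials $(p_H,\,p_{H-v})$ occurring, and hence the roots of $p_H = r(\la)\,p_{H-v}$, sweep out $[\la^*,\infty)$ densely; the Hoffman--Smith subdivision theorem (subdividing an internal‑path edge strictly decreases $\la_1$ once $\la_1>2$) supplies the continuity needed to interpolate. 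Proving that this flexibility genuinely produces density, with no gaps remaining above $\la^*$ — matching Lemma~\ref{abcdef}, where below $\la^*$ the limit points $\al_m$ are isolated — is the crux; morally it is a careful study of which spectral radii are attainable, in the spirit of Brouwer--Neumaier's description of $\F(\la^*)$ but tracking minimal configurations.

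To wrap up: granting a dense $S\subseteq\Lambda_1'$, every $\la\ge\la^*$ is a limit point of $\Lambda_1$ by the two reductions above, and the sequence of distinct graphs follows. I expect the reductions and the transfer‑matrix/characteristic‑polynomial bookkeeping to be routine, and the hard part to be the density step — specifically, engineering families of graphs whose limiting spectral radii do \emph{not} collapse into a Cantor set but genuinely fill $[\la^*,\infty)$, with $\la^*$ as the sharp lower endpoint.
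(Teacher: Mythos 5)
First, note that the paper does not prove this statement at all: it is quoted verbatim as Shearer's theorem and used as a black box, so there is no internal proof to compare against. Judged on its own terms, your write-up is an outline rather than a proof, and the gap is exactly where you say it is. The reductions in your first paragraph are fine (a limit point of $\Lambda_1$ is automatically the limit of spectral radii of infinitely many pairwise non-isomorphic graphs, and nested connected families produce limit points), and your transfer-matrix/characteristic-polynomial bookkeeping for ``path-with-attachments'' families is the right machinery; your formula $\sqrt{2+\sqrt{4+q^2}}$ for the sparse-pendant limit is even correct. But the theorem \emph{is} the density statement with sharp endpoint $\la^*$, and you never establish it --- the final paragraph begins ``granting a dense $S\subseteq\Lambda_1'$,'' which is assuming the conclusion. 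Exhibiting a few one-parameter families whose limits accumulate only at isolated points (as you correctly observe happens) shows why the claim is delicate, not why it is true.

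What is missing is a construction that, for an \emph{arbitrary} prescribed $\la\ge\la^*$, produces graphs with spectral radius converging to $\la$. Shearer's actual argument does this greedily: writing $\la=\beta^{1/2}+\beta^{-1/2}$, he builds caterpillars $T(a_1,\dots,a_k)$ --- a path with pendant vertices attached at positions $a_1<\dots<a_k$ --- choosing each $a_{k+1}$ as small as possible subject to $\la_1(T(a_1,\dots,a_{k+1}))<\la$, and then proves two quantitative facts: (i) the greedy choice never gets stuck, precisely because $\la\ge\la^*$ (equivalently $\beta$ is at least the golden ratio, the threshold at which a single distant pendant can always be absorbed without overshooting), and (ii) the resulting increasing sequence $\la_1(T(a_1,\dots,a_k))$ converges to $\la$ and not to something strictly smaller, because if it stalled below $\la$ the greedy rule would have permitted an earlier attachment. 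Both steps are estimates on the recursions you wrote down, not soft topological facts; without them (or a substitute, e.g.\ an explicit dense family with controlled limits), the proposal does not prove the theorem. If you want to complete it along your lines, the concrete task is: given $\la\ge\la^*$ and $\eps>0$, exhibit a specific infinite family whose limiting spectral radius lies in $(\la-\eps,\la]$, uniformly in $\la$.
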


Equipped with these facts from spectral graph theory, we are in the position to prove Theorem~\ref{forb_main}.

\begin{proof}[Proof of Theorem~\ref{forb_main}]
  Note that Lemma~\ref{abcdef}(b\textsubscript{2}) and Theorem~\ref{shearer} imply that $\sset{\al_2, \al_3, \dots}\cup[\lambda^*, \infty) \subseteq \lim_+\Lambda_1$. Thus by Proposition~\ref{right-sided-limit}, the second half of Theorem~\ref{forb_main} is proved. For the first half of Theorem~\ref{forb_main}, we break the proof into two cases.

  \noindent\textbf{Case 1: $\la < 2$.} By Lemma~\ref{abcdef}, we have $S_4 \notin \F(\la)$ and $P_n \notin \F(\la)$ for some $n$. Clearly, a connected graph $G$ that contains neither $S_4$ nor $P_n$ has maximum degree $\le 3$ and diameter $< n$. Therefore \[
    \sset{S_4, P_n} \cup \sset{\text{connected graph }G\notin\F(\la)\text{ with maximum degree} \le 3 \text{ and diameter}<n}
  \] is a finite forbidden subgraphs characterization for $\F(\la)$.

  \noindent\textbf{Case 2: $\la \in [2, \la^*)\setminus\sset{\al_2, \al_3, \dots}$.} Choose the smallest $m$ such that $\al_m > \la$. Observe that $\la \in [2, \al_2)$ when $m = 2$, and $\la \in (\al_{m-1},\al_m)$ when $m \ge 3$. Then choose, in view of Lemma~\ref{abcdef}, $n > m$ such that $A_n, E_{m, n}, F_n \notin \F(\la)$ and $B_{m_1, n', m_2} \in \F(\la)$ for all $m_1, m_2 < m$ and $n' > n$. In fact, when $m = 2$, the requirement that $B_{m_1,n',m_2} \in \F(\la)$ for all $m_1,m_2 < m$ and $n' > n$ is redundant because of Lemma~\ref{abcdef}(b\textsubscript{1}). By Lemma~\ref{abcdef}, we also have $S_5 \notin \F(\la)$, $B_{m, 0, 1}, B_{m, 1, 1}, \dots, B_{m, n, 1}\notin \F(\la)$ and $D_2, D_3, \dots, D_{m + n} \notin \F(\la)$.

  We claim that if a connected graph $G$ contains none of
  \[
    \G_0 := \sset{S_5, A_n, B_{m,0,1}, B_{m,1,1}, \dots, B_{m,n,1}, D_2, D_3, \dots, D_{m+n}, E_{m,n}, F_n},
  \]
  then $G$ is a path, or a cycle, or $E_{m_0,n_0}$ for some $m_0 < m$ and some $n_0$, or $B_{m_1, n', m_2}$ for some $m_1, m_2 < m$ and some $n' > n$, or $G$ has maximum degree $\le 4$ and radius $< m+n$. Recall that the eccentricity of a vertex $v$ in $G$ is the maximum distance between $v$ and any other vertex of $G$, and the radius of $G$ is the minimum eccentricity of any vertex in $G$.

  Notice that, by Lemma~\ref{abcdef}, the spectral radii of paths and cycles are at most $2$, the spectral radius of $E_{m_0,n_0}$ is $< \al_{m_0} \le \la$ for all $m_0 < m$ and $n_0$ by the choice of $m$, and the spectral radius of $B_{m_1,n',m_2}$ is $\le \lambda$ for all $m_1, m_2 < m$ and $n' > n$ by the choice of $n$. The claim thus implies that
  \[
    \G_0 \cup \sset{\text{connected graph }G\notin\F(\la)\text{ with maximum degree}\le 4\text{ and radius}< m+n}
  \] is a finite forbidden subgraphs characterization for $\F(\la)$.

  Finally, it suffices to prove the claim. Because $G$ does not contain $D_2, \dots, D_{m+n}, E_{m,n}$ and $D_{m+n+1}, D_{m+n+2}, \dots$ contain $E_{m,n}$, the graph $G$ does not contain $D_2, D_3, \dots$, and so $G$ must be a tree or a cycle. Suppose that $G$ is not a path or a cycle. As $G$ does not contain $S_5$, $G$ is a tree of maximum degree $3$ or $4$. If the degree of a vertex $v$ is $4$, then as $G$ does not contain $A_n$, the eccentricity of $v$ is $< n$ and so the radius of $G$ is $< n$. If a vertex $v$ of degree $3$ is not adjacent to any leaf, then as $G$ does not contain $F_n$, the eccentricity of $v$ is $< n$ and so the radius of $G$ is $< n$.

  Hereafter, we may assume that the maximum degree of $G$ is $3$ and that every vertex of degree $3$ is adjacent to a leaf, in other words, $G$ is a caterpillar\footnote{A caterpillar is a tree in which all the vertices are within distance $1$ of a central path.} of maximum degree $3$. Let $v_1,\dots, v_\ell$ be the central path of $G$. Because $G$ does not contain $B_{m,0,1}, B_{m,1,1},\dots, B_{m,n,1}, E_{m,n}$ and $B_{m,n+1,1}, B_{m,n+2,1},\dots$ contain $E_{m,n}$, the graph $G$ does not contain $B_{m,0,1}, B_{m,1,1}, B_{m,2,1}, \dots$, and so for every pair of vertices $v_i, v_j$ of degree $3$ with $i < j$ the distances between $v_1, v_i$ and between $v_j, v_\ell$ are $< m$. We finish the proof of the claim by checking the following three cases.
  \begin{enumerate}[nosep]
    \item If $G$ has only one vertex of degree $3$, then as $G$ does not contain $E_{m,n}$, the graph $G$ must be $E_{m_0,n_0}$ for some $m_0 < m$ and some $n_0$.
    \item If $G$ has exactly two vertices of degree $3$, then the graph $G$ must be $B_{m_1, n', m_2}$ for some $m_1, m_2 < m$ and some $n'$. In this case, either $n' > n$ or the radius of $G$ is $\le \frac{1}{2}(2m+n) < m+n$.
    \item If $G$ has at least $3$ vertices, say $v_i, v_j, v_k$ with $i < j < k$, of degree $3$, then the distances from $v_j$ to $v_1$ and to $v_\ell$ are $< m$, hence the radius of $G$ is $< m$. \qedhere
  \end{enumerate}
\end{proof}

\section{Equiangular lines} \label{equi}

\subsection{The framework to estimate $N_\al(n)$} \label{framework}

We shall set the stage for the proof of Theorem~\ref{equi_main} by adapting the framework developed in Section~2.1 of \cite{MR3742757} to estimate $N_\al(n)$.

\begin{definition}
  Let $L$ be a subset of the interval $[-1, 1)$. A finite set $C$ of unit vectors in $\R^n$ is called a \emph{spherical $L$-code} if $\ip{v_1}{v_2}\in L$ for any pair of distinct vectors $v_1, v_2$ in $C$. The \emph{Gram matrix} $M$ is given by $M_{ij} = \ip{v_i}{v_j}$. The \emph{underlying graph} $G$ is defined as follows: let $C$ be its vertex set, and for any distinct $v_i, v_j\in C$, we put the edge $(v_i, v_j)$ if and only if $\ip{v_i}{v_j} < 0$.
\end{definition}

By choosing a unit vector in the direction of each line in a family of equiangular lines with angle $\arccos\al$ in $\R^n$, we obtain a spherical $\sset{\pm \al}$-code $C_\al$ in $\R^n$. The key insight of Lemma~2.8 of Balla et al.~\cite{MR3742757} is that one can obtain a spherical $L(\al, t)$-code $C$ in $\R^n$, where
\[
  L(\al, t) := \sset{-\frac{1}{\la}\cdot\left(1-\frac{1}{t+\al^{-1}}\right)+\frac{1}{t+\al^{-1}}, \frac{1}{t+\al^{-1}}}, \la := \frac{1-\al}{2\al} \text{ and } t = \floor{\log\log n},
\]
from a spherical $\sset{\pm \al}$-code $C_\al$ such that $\abs{C_\al} \le \abs{C} + o(n)$, and moreover the size of $C$ is much easier to bound from above than $C_\al$. However, to estimate $N_\al(n)$ up to a constant error relative to $\al$, we need instead $\abs{C_\al} \le \abs{C} + O(1)$, which requires $t$ be independent from $n$. Although Lemma~\ref{balla} is stated differently from Lemma~2.8 of Balla et al.~\cite{MR3742757}, the same proof goes through without alternation. For the convenience of the readers, we include here a streamlined proof.

\begin{lemma} \label{balla}
  Let $\al\in(0,1)$ and $t\in\N$ be fixed such that $t > \left(\frac{1-\al}{2\al}\right)^2+1$. For any $n\in \N$ and any spherical $\sset{\pm\al}$-code $C_\al$ in $\R^n$, there exists a spherical $L(\al, t)$-code $C$ in $\R^n$ such that $\abs{C_\al} \le \abs{C} + O_{\al,t}(1)$.
\end{lemma}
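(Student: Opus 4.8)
The plan is to pass to the Gram matrix of $C_\al$ together with its underlying graph, reformulate the existence of the desired $L(\al,t)$-code as a spectral condition on an auxiliary graph, and then build that auxiliary graph by light surgery on the underlying graph of $C_\al$.

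\smallskip\noindent\emph{Setup and spectral facts.} Rescale so that $C_\al=\sset{v_1,\dots,v_N}\subseteq\R^n$ consists of unit vectors with $\ip{v_i}{v_j}\in\sset{\pm\al}$ for $i\ne j$, and let $A$ be the adjacency matrix of the underlying graph $G$. Then the Gram matrix $M=[\ip{v_i}{v_j}]$ equals $2\al\,Q$ with $Q:=\la I+\tfrac12 J-A$, where $J$ is the all-ones matrix; since $M$ is a Gram matrix of $N$ vectors in $\R^n$, the matrix $Q$ is positive semidefinite with $\rank{Q}\le n$. Two consequences are recorded for later use: from $Q\succeq0$ we get $x^\top A x\le\la\normsq{x}$ for every $x\perp\mathbf 1$, hence $\la_2(A)\le\la$ by Cauchy interlacing, so $G$ has at most one connected component $G_0$ of spectral radius $>\la$ (two such would have orthogonal Perron vectors and force $\la_2(A)>\la$); and from $\rank{Q}\le n$ we get $\rank{(\la I-A)}=\rank{(Q-\tfrac12 J)}\le n+1$, so $\la$ occurs as an eigenvalue of $A$ with multiplicity at least $N-n-1$.

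\smallskip\noindent\emph{The reformulation.} Put $\beta_+:=\tfrac1{t+\al^{-1}}\in(0,1)$. I claim it suffices to produce a graph $G'$ on a vertex set of size $\ge N-O_{\al,t}(1)$ such that every connected component of $G'$ has spectral radius $\le\la$ and such that $\la$ occurs as an eigenvalue of $A(G')$ with multiplicity $\ge N-n-O_{\al,t}(1)$. Indeed, the symmetric matrix $M':=\tfrac{1-\beta_+}{\la}(\la I-A(G'))+\beta_+J$ then has all diagonal entries $1$, entry $\beta_+$ at each non-edge of $G'$, and entry $-\tfrac1\la(1-\beta_+)+\beta_+$ — the negative element of $L(\al,t)$ — at each edge of $G'$ (this is exactly what the exotic value in the definition of $L(\al,t)$ is designed to produce). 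The hypothesis on the components gives $\la I-A(G')\succeq0$, so $M'\succeq0$ as a sum of two positive semidefinite matrices (using $\beta_+\in(0,1)$); and $\rank{M'}\le\rank{(\la I-A(G'))}+1=\lvert V(G')\rvert-(\text{mult.\ of }\la\text{ in }A(G'))+1\le n+O_{\al,t}(1)$, which is brought down to $n$ by discarding $O_{\al,t}(1)$ further vectors. The unit vectors realizing $M'$ are then the required spherical $L(\al,t)$-code, of size $\lvert V(G')\rvert\ge N-O_{\al,t}(1)$. (When $G'=G$, this $M'$ is, up to a positive scalar, the Gram matrix obtained by orthogonally projecting the $v_i$ off the unit vector $e\in\R^n$ with $\ip{v_i}{e}$ constant equal to $\bigl(\tfrac{\al-\beta_+}{1-\beta_+}\bigr)^{1/2}$ — when such an $e$ exists — which is the geometric intuition behind the whole construction.)

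\smallskip\noindent\emph{Building $G'$, and the obstacle.} First invoke the hypothesis $t>\la^2+1$, equivalently $\la<\sqrt{t-1}$, the spectral radius of the star with $t-1$ leaves: since $\la_2(A)\le\la<\sqrt{t-1}$, the graph $G$ contains no two vertex-disjoint such stars, so some set of at most $t$ vertices meets all of them and $G$ minus that set has maximum degree $\le t-2$. Now deal with the unique (if any) remaining component $G_0$ of spectral radius $>\la$. If $\la$ is not an eigenvalue of $A(G_0)$ — automatic unless $\la$ is an algebraic integer — erase every edge inside $G_0$: its spectral radius drops to $0$, and since $\la$ was not an eigenvalue of $A(G_0)$ this does not lower the overall multiplicity of $\la$. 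Otherwise $\la_2(A(G_0))=\la$, and here one argues that, having bounded degree, such a $G_0$ can be taken to have only $O_{\al,t}(1)$ vertices, so it may simply be deleted. Either way the surviving graph $G'$ has all components of spectral radius $\le\la$ and still carries $\la$ with multiplicity $\ge N-n-O_{\al,t}(1)$, and the previous paragraph finishes the proof. I expect the main difficulty to be precisely this last structural step — controlling the exceptional component $G_0$, i.e.\ either bounding its order or pruning it by a bounded vertex deletion without collapsing its $\la$-eigenspace — together with the verification (in the projection picture) that the projecting vector $e$ can be chosen of unit norm; everything else is linear algebra and bookkeeping of the $O_{\al,t}(1)$ discarded vectors.
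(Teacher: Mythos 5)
Your reduction (build a graph $G'$ on $\ge \abs{C_\al}-O_{\al,t}(1)$ vertices, all of whose components have spectral radius $\le\la$ and in which $\la$ has multiplicity $\ge \abs{C_\al}-n-O_{\al,t}(1)$, then synthesize the code from $\frac{1-\beta_+}{\la}(\la I-A(G'))+\beta_+J$) is a genuinely different route from the paper's, and its forward direction is essentially sound except that it places the code in $\R^{n+1}$ rather than $\R^n$: discarding $O_{\al,t}(1)$ vectors does \emph{not} lower the dimension of the span, so that patch fails (the paper avoids this by literally projecting the vectors inside $\R^n$). The fatal problems, however, are in the construction of $G'$. First, the inference ``$\la_2(A)\le\la<\sqrt{t-1}$, hence $G$ has no two vertex-disjoint copies of $S_{t-1}$'' is false: $\la_2$ is not monotone under taking subgraphs that are not unions of components, and e.g.\ $K_{t,t}$ has $\la_2=0$ yet contains $t$ pairwise disjoint copies of $S_{t-1}$. (Your argument for ``at most one \emph{component} of spectral radius $>\la$'' is fine precisely because components contribute orthogonal Perron vectors with no cross terms; disjoint stars inside one component do not.) Underlying graphs of $\sset{\pm\al}$-codes really can contain huge complete bipartite subgraphs — the underlying graph is only canonical up to switching — so the degree-reduction step collapses. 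This is exactly why the paper's proof begins by \emph{switching} signs of vectors relative to an independent set $I$ found via Ramsey's theorem, rather than working with the graph $G$ as given.

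Second, the treatment of the exceptional component $G_0$ in the case $\la\in\operatorname{spec}(A(G_0))$ is not a proof: you assert that a bounded-degree connected graph with $\la_1>\la\ge\la_2$ and $\la$ in its spectrum has $O_{\al,t}(1)$ vertices, but such graphs can be arbitrarily large (e.g.\ for $\la=2$, a long cycle with a pendant vertex has $\la_1>2$, $\la_2\le 2$ by interlacing with the cycle, and bounded degree), and you yourself flag this as ``the main difficulty.'' Deleting an unbounded $G_0$ loses too many vertices, so the argument does not close. Finally, note that the hypothesis $t>\la^2+1$ plays no role resembling your star-radius comparison in the actual proof: the paper uses it to make a $3\times 3$ Gram determinant (of the sums of vectors over $I_0$, $I_1$ and a putative independent set $J$) negative unless $\abs{J}=O_{\al,t}(1)$, which — combined with Ramsey and the bounded clique number — shows that all vectors nontrivially attached to $I$ number only $O_{\al,t}(1)$; the surviving vectors are then projected off $\mathrm{span}(I)$ and normalized to land exactly in $L(\al,t)$. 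If you want to salvage your approach, the missing ingredients are precisely a switching step and a quantitative bound on vertices ``attached'' to a large independent set; without them the surgery on $G$ cannot be carried out.
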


\begin{proof}
  Denote by $R(\cdot, \cdot)$ the Ramsey number. Let $G$ be the underlying graph of $C_\al$. The clique number of the underlying graph $G$ is $\le 1+\floor{1/\al}$. In fact, if $K$ is a clique in $G$, then
  \[
    0 \le \normsq{\sum_{v\in K}v} = \sum_{v\in K}\normsq{v} + \sum_{\substack{v_1, v_2\in K\\v_1\neq v_2}}\ip{v_1}{v_2} = \abs{K} - \abs{K}(\abs{K}-1)\al \implies \abs{K} \le 1 + 1/\al.
  \]

  If $\abs{C_\al} < R(2+\floor{1/\al}, t)$, then we are done by taking $C = \emptyset$. Otherwise, by Ramsey's theorem, $G$ contains an independent set $I$ of size $t$. For every $v \in C_\al\setminus I$, we may opt to switch $v$ to $-v$ to ensure that the degree of $v$ to $I$ is at most $t/2$. We partition $C_\al\setminus I$ by how the vertices are connected to $I$. In particular, each part in the partition is indexed by some $I_1 \subset I$ with $\abs{I_1}\le t/2$, and is defined by
  \[
    C_\al(I_1) := \dset{v\in C_\al\setminus I}{\forall u\in I\setminus I_1\ u\not\sim v \text{ and } \forall u\in I_1\ u\sim v}.
  \]

  For every $I_1 \subset I$ such that $I_1\neq \emptyset$ and $\abs{I_1} \le t/2$, we bound the independent number of the subgraph $G[C_\al(I_1)]$ induced on $C_\al(I_1)$ as follows. Suppose that $J\subseteq C_\al(I_1)$ is an independent set of $G[C_\al(I_1)]$. Let $I_0 := I \setminus I_1$ and set \[
    u_0 := \sum_{v\in I_0}v,\quad u_1 := \sum_{v\in I_1}v,\quad u_2 := \sum_{v\in J}v.
  \]
  One can work out the Gram matrix of $u_0, u_1, u_2$: \[
    \begin{pmatrix}
      \abs{I_0}+\abs{I_0}(\abs{I_0}-1)\al & \abs{I_0}\abs{I_1}\al & \abs{I_0}\abs{J}\al\\
      \abs{I_0}\abs{I_1}\al & \abs{I_1}+\abs{I_1}(\abs{I_1}-1)\al & -\abs{I_1}\abs{J}\al\\
      \abs{I_0}\abs{J}\al & -\abs{I_1}\abs{J}\al & \abs{J}+\abs{J}(\abs{J}-1)\al\\
    \end{pmatrix}.
  \] Because the Gram matrix is positive semidefinite, its determinant \[
    \abs{I_0}\abs{I_1}\abs{J}\left((1-\al)^2(1-\al+\al(\abs{I_0}+\abs{I_1}))-4\al^3\left(\abs{I_0}\abs{I_1}-\left(\frac{1-\al}{2\al}\right)^2\right)\abs{J}\right)
  \] is non-negative. Together with $\abs{I_0} + \abs{I_1} = \abs{I} =t$ and $\abs{I_0}\abs{I_1} \ge t-1 > \left(\frac{1-\al}{2\al}\right)^2$, we obtain a bound on the size of the independent set $J$: \[
    \abs{J} \le \frac{(1-\al)^2(1-\al+\al(\abs{I_0}+\abs{I_1}))}{4\al^3\left(\abs{I_0}\abs{I_1}-\left(\frac{1-\al}{2\al}\right)^2\right)} = O_{\al,t}(1).
  \]

  Again, since the clique number of $G[C_\al(I_1)]$ is $\le 1+\floor{1/\al}$, by Ramsey's theorem, $\abs{C_\al(I_1)} = O_{\al,t}(1)$. We thus have the estimation
  \[
    \abs{C_\al} = \abs{I} + \abs{C_\al(\emptyset)} + \sum_{\substack{\emptyset \neq I_1 \subset I\\\abs{I_1}\le t/2}}\abs{C_\al(I_1)} = \abs{C_\al(\emptyset)} + O_{\al,t}(1).
  \]

  Pick any $v \in C_\al(\emptyset)$ and consider the projection $p(v)$ of $v$ onto the orthogonal complement of $\mathrm{span}(I)$. The vector $p(v)$ is given by $v - \sum_{u\in I}c_u u$ for some $\bm{c} = \dset{c_u}{u\in I}$ that minimizes $m(\bm{c}) := \normsq{v-\sum_{u\in I}c_u u}$.
  Since $\sset{v} \cup I$ is an independent set, the quadratic function $m(\bm{c})$ is symmetric with respect to $\dset{c_u}{u\in I}$, and so it is minimized at $\bm{c} = c\bm{1}$ for some $c\in \R$.
  Note that
  \[
    m(c\bm{1}) = \normsq{v - c\sum_{u\in I}u} = \normsq{v} - 2c\sum_{u\in I}\ip{v}{u} + c^2\normsq{\sum_{u\in I}u} = 1 - 2ct\al + c^2(t + t(t-1)\al),
  \]
  which is minimized at
  \begin{equation} \label{def_c}
    c := \frac{\al}{1+(t-1)\al}.
  \end{equation}
  Therefore $p(v) = v - c\sum_{u\in I}u$.

  Denote by $\hat{p}(v) := p(v)/\norm{p(v)}$ the normalized vector of $p(v)$. Lastly we show that $$C := \dset{\hat{p}(v)}{v\in C_{\al}(\emptyset)}$$ is a spherical $L(\al, t)$-code. For any $v_1, v_2\in C_\al(\emptyset)$, we compute $\ip{p(v_1)}{p(v_2)}$ in terms of $\ip{v_1}{v_2}$:
  \begin{multline} \label{ippv}
    \ip{p(v_1)}{p(v_2)} = \ip{v_1 - c\sum_{u\in I}u}{v_2 - c\sum_{u\in I}u} = \ip{v_1}{v_2} - c\sum_{u\in I}\ip{v_1}{u} - c\sum_{u\in I}\ip{v_2}{u} + c^2\normsq{\sum_{u\in I}u} \\ = \ip{v_1}{v_2} - ct\al - ct\al + c^2(t+t(t-1)\al) \stackrel{\eqref{def_c}}{=} \ip{v_1}{v_2} - 2ct\al + ct\al = \ip{v_1}{v_2} - ct\al.
  \end{multline}
  For any $v_1, v_2 \in C_{\al}(\emptyset)$ such that $v_1 \neq v_2$, we get
  \[
    \ip{\hat{p}(v_1)}{\hat{p}(v_2)} = \frac{\ip{p(v_1)}{p(v_2)}}{\norm{p(v_1)}\norm{p(v_2)}} \stackrel{\eqref{ippv}}{=} \frac{\ip{v_1}{v_2}-ct\al}{\sqrt{\ip{v_1}{v_1}-ct\al}\sqrt{\ip{v_2}{v_2}-ct\al}} = \frac{\ip{v_1}{v_2}-ct\al}{1-ct\al},
  \]
  which takes its value in $\sset{\frac{\pm \al - ct\al}{1-ct\al}} \stackrel{\eqref{def_c}}{=} L(\al, t)$.
\end{proof}


The following lemma, which will be applied in conjunction with Lemma~\ref{balla}, imposes structural restriction on the underlying graph of a spherical $L(\al, t)$-code.

\begin{lemma} \label{balla_cor}
  Given $\al \in (0,1)$ and a finite family $\G$ of graphs with spectral radius $>\la := \frac{1-\al}{2\al}$, there exists $t \in \N$ such that the underlying graph of any spherical $L(\al, t)$-code does not contain any graph in $\G$ as a subgraph.
\end{lemma}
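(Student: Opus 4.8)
The plan is to convert the statement into a bound on the spectral radii of the \emph{induced} subgraphs of the underlying graph, and then to let $t \to \infty$. Since $\G$ is finite, put $\la' := \min_{H\in\G}\la_1(H)$, so that $\la' > \la$, and $k_0 := \max_{H\in\G}\abs{V(H)}$. By monotonicity of the spectral radius under taking subgraphs, it is enough to find $t \in \N$ such that, for the underlying graph $G$ of every spherical $L(\al,t)$-code, one has $\la_1(G[W]) < \la'$ for every set $W$ of at most $k_0$ vertices of $G$: for then $G$ cannot contain any $H\in\G$ as a subgraph, because $W = V(H)$ would give $\la_1(G[W]) \ge \la_1(H) \ge \la'$.

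First I would record how the Gram matrices of subsets of the code look. Write $\gamma := \tfrac{1}{t+\al^{-1}}$ for the positive element of $L(\al,t)$; a short computation shows the other element equals $\gamma - \tfrac{1-\gamma}{\la}$, which is negative, so that edges of $G$ correspond exactly to the inner product $\gamma - \tfrac{1-\gamma}{\la}$ and non-edges to $\gamma$. Consequently, if $W\subseteq V(G)$ and $A$ denotes the adjacency matrix of $G[W]$, then the Gram matrix of the unit vectors indexed by $W$ is
\[
  M = (1-\gamma)\Bigl(I - \tfrac{1}{\la}A\Bigr) + \gamma J,
\]
where $I$ and $J$ are the identity and all-ones matrices. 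Since $M$ is positive semidefinite, I would test it against a nonnegative unit-norm eigenvector $u$ of $A$ for the eigenvalue $\la_1(G[W])$ (which exists by Perron--Frobenius): using $\ip{u}{\bm 1}^2 \le \abs{W}$ by Cauchy--Schwarz, one gets $0 \le \ip{u}{Mu} = (1-\gamma)\bigl(1 - \tfrac{1}{\la}\la_1(G[W])\bigr) + \gamma\ip{u}{\bm 1}^2 \le (1-\gamma)\bigl(1-\tfrac{1}{\la}\la_1(G[W])\bigr) + \gamma\abs{W}$, and hence
\[
  \la_1(G[W]) \le \la\Bigl(1 + \tfrac{\gamma\,\abs{W}}{1-\gamma}\Bigr).
\]

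To finish, observe that $\gamma \to 0$ as $t \to \infty$, so the right-hand side above with $\abs{W}$ replaced by $k_0$ tends to $\la$; choosing $t$ large enough that it is below $\la'$ completes the argument, since then $\la_1(G[W]) \le \la\bigl(1 + \tfrac{\gamma k_0}{1-\gamma}\bigr) < \la'$ for every $W$ with $\abs{W}\le k_0$. The crux of the argument — and the only place where finiteness of $\G$ is truly needed — is this last step: the bound on $\la_1(G[W])$ only converges to $\la$ and is always strictly larger than $\la$, so to push it below the spectral radii of the graphs in $\G$ one needs both a uniform bound $k_0$ on their orders and a strictly positive gap $\la' - \la$, both supplied by $\G$ being finite. (If $\G$ were infinite with $\la_1(H) \downto \la$, no $t$ could work, consistent with the dichotomy of Section~\ref{forb}.) The remaining ingredients — the sign of the negative entry of $L(\al,t)$, the identity for $M$, and the limit as $t\to\infty$ — are routine.
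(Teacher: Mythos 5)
Your proof is correct and is essentially the paper's argument: the same Gram-matrix identity $M=(1-\gamma)(I-A/\la)+\gamma J$ on a small vertex subset, the same use of positive semidefiniteness, and a choice of $t$ that is literally equivalent to the paper's condition \eqref{choose_t}. The only cosmetic difference is that you bound $\la_1(G[W])$ directly via the Rayleigh quotient with the Perron eigenvector and Cauchy--Schwarz, whereas the paper derives a contradiction by applying Weyl's inequality to show the least eigenvalue of $M$ would be negative.
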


We recall a necessary condition on eigenvalues of the sum of two matrices.

\begin{theorem}[Weyl's inequality~\cite{MR1511670}]
  Given two $n\times n$ Hermitian matrices $A$ and $B$. Denote the eigenvalues of $A$ as $\la_1(A)\ge \la_2(A) \ge \dots \ge \la_n(A)$, and similarly denote the eigenvalues of $B$ and $A+B$. Whenever $0 \le i, j, i+j < n$, $\la_{i+j+1}(A+B) \le \la_{i+1}(A) + \la_{j+1}(B)$.
\end{theorem}

We shall denote by $I_n$ the identity matrix of order $n$ and $J_n$ the all-ones matrix of order $n$, and we suppress the subscripts when the order of the matrices could be inferred from the context.

\begin{proof}[Proof of Lemma~\ref{balla_cor}]
  We first choose $t\in\N$ depending only on $\al$ and $\G$. Because $\la_1(G_0) > \la$ for every $G_0\in\G$, we thus choose $t \in \N$ large enough so that
  \begin{equation} \label{choose_t}
    1 - \frac{\la_1(G_0)}{\la} + \frac{v(G_0)}{t + \al^{-1} - 1} < 0
    \text{ for all } G_0\in\G,
  \end{equation}
  where $v(G_0)$ is the order in $G_0$.

  We claim that the underlying graph $G$ of any spherical $L(\al, t)$-code does not contain any graph in $\G$ as a subgraph. Suppose on the contrary that $G$ contains some graph $G_0\in\G$. Let $G_0'$ be the minimal induced subgraph of $G$ that contains $G_0$ as a subgraph, and $C_0'$ the vertex set of $G_0'$. Note that $v(G_0') = v(G_0)$. By the monotonicity of $\la_1$ and the choice of $t$ in \eqref{choose_t}, we obtain
  \begin{equation} \label{g1p}
    1 - \frac{\la_1(G_0')}{\la} + \frac{v(G_0')}{t + \al^{-1} - 1} < 0.
  \end{equation}
  Let $M_0'$ be the Gram matrix of $C_0'$ and let $A_0'$ be the adjacency matrix of $G_0'$. Since $C_0'$ is still a spherical $L(\al, t)$-code, the two matrices $M_0'$ and $A_0'$ are related by the equation \[
    \left(1+\frac{1}{t + \al^{-1}-1}\right)M_0' = I - \frac{A_0'}{\la} + \frac{J}{t + \al^{-1} - 1}.
  \]
  Using the fact that $\la_1(J) = v(G_0')$ and \eqref{g1p}, we know from Weyl's inequality that the least eigenvalue of $M_0'$ is negative. This contradicts with the fact that a Gram matrix is positive semidefinite.
\end{proof}

\subsection{Application of Theorem~\ref{forb_main}} \label{3.2}

Our application of Theorem~\ref{forb_main} addresses the upper bound on $N_\al(n)$ in Theorem~\ref{equi_main} when $\F(\la)$ has a finite forbidden subgraphs characterization.

\begin{corollary} \label{forb_main_cor}
  Define $\la^*, \al_2, \al_3, \dots$ as in Theorem~\ref{forb_main}. Given $\al \in (0,1)$ such that $\la := \frac{1-\al}{2\al} < \la^*$ and $\la \not\in \sset{\al_2, \al_3, \dots}$, the maximum number $N_\al(n)$ of equiangular lines in $\R^n$ with angle $\arccos\al$ is at most $\frac{k(\la)}{k(\la)-1}\cdot n + O(1)$, where $k(\la)$ is the spectral radius order of $\la$; in case $k(\la) = \infty$, $N_\al(n) \le n+O(1)$.
\end{corollary}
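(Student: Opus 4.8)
The plan is a three‑stage pipeline: reduce an extremal $\sset{\pm\al}$‑code to an $L(\al,t)$‑code via Lemma~\ref{balla}, use Theorem~\ref{forb_main} together with Lemma~\ref{balla_cor} to force every connected component of its underlying graph to lie in $\F(\la)$, and then convert this structural fact into the linear bound through an eigenvalue‑multiplicity count.

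Setting up the constants: since $\la<\la^*$ and $\la\notin\sset{\al_2,\al_3,\dots}$, the (constructive) proof of Theorem~\ref{forb_main} supplies a finite family $\G$ of connected graphs, each of spectral radius $>\la$, such that $\F(\la)$ consists exactly of the connected graphs containing no member of $\G$ as a subgraph. Apply Lemma~\ref{balla_cor} to $\G$ and fix $t\in\N$ large enough that both it and Lemma~\ref{balla} (which requires $t>\la^2+1$) are in force. Let $C_\al\subseteq\R^n$ be a spherical $\sset{\pm\al}$‑code with $\abs{C_\al}=N_\al(n)$; Lemma~\ref{balla} yields a spherical $L(\al,t)$‑code $C\subseteq\R^n$ with $N_\al(n)\le\abs{C}+O(1)$. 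Write $N:=\abs{C}$, let $G$ be the underlying graph of $C$ and $A$ its adjacency matrix. By Lemma~\ref{balla_cor}, $G$ contains no member of $\G$; since those are connected, no component of $G$ contains one either, so every component of $G$ lies in $\F(\la)$ and hence has spectral radius $\le\la$.

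The heart of the argument is the multiplicity count. Put $s:=t+\al^{-1}$, so the inner products occurring in $C$ are $q:=1/s>0$ and $p:=-\tfrac{1}{\la}\bigl(1-\tfrac{1}{s}\bigr)+\tfrac{1}{s}<0$, and the Gram matrix of $C$ equals $M=(1-q)I+(p-q)A+qJ$. Rearranging, $A=\tfrac{1-q}{q-p}I+\tfrac{q}{q-p}J-B$ with $B:=\tfrac{1}{q-p}M$, and the one elementary computation that matters is $\tfrac{1-q}{q-p}=\la$ (one also finds $\tfrac{q}{q-p}=\tfrac{\la}{s-1}$). Since $M\succeq0$ has rank $\le n$, so does $B$, hence $\dim\ker B\ge N-n$; and for $v\in\ker B$ with $\mathbf{1}^{\top}v=0$ one has $Av=\la v$, so $\ker B\cap\mathbf{1}^{\perp}$ has dimension at least $N-n-1$ and lies in the $\la$‑eigenspace of $A$. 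Thus the multiplicity of $\la$ as an eigenvalue of $A$ is at least $N-n-1$. On the other hand $A$ is the block‑diagonal sum of the adjacency matrices of the components of $G$: a component of spectral radius $<\la$ does not have $\la$ as an eigenvalue, while a component of spectral radius exactly $\la$ is connected and, by Perron--Frobenius, has $\la$ as a simple eigenvalue. Hence that multiplicity equals the number $c$ of components of $G$ of spectral radius exactly $\la$, and since each such component has at least $k(\la)$ vertices, $N\ge c\cdot k(\la)$ (when $k(\la)=\infty$ this forces $c=0$).

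Combining gives the result: if $k(\la)<\infty$ (note $k(\la)\ge2$ since $\la>0$) then $N-n-1\le c\le N/k(\la)$, so $N\le\tfrac{k(\la)}{k(\la)-1}(n+1)$; if $k(\la)=\infty$ then $N\le n+1$; and $N_\al(n)\le N+O(1)$ then yields the claimed bound. I expect the only genuinely non‑routine point to be the identity $\tfrac{1-q}{q-p}=\la$, which is precisely what makes the target eigenvalue of $A$ equal to $\la$ regardless of $t$ and thereby brings the spectral‑radius order $k(\la)$ into the count; the rest is an assembly of Lemmas~\ref{balla} and~\ref{balla_cor} with the Perron--Frobenius bound on eigenvalue multiplicity.
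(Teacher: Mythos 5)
Your proof is correct and is essentially the paper's argument in a different dress: the same pipeline (Theorem~\ref{forb_main} $\to$ Lemma~\ref{balla} $\to$ Lemma~\ref{balla_cor}, then per-component spectral radius $\le\la$), with your multiplicity count of $\la$ in $A$ being exactly the paper's rank count, since $\rank{I-A/\la}=\abs{C}-\mathrm{mult}_A(\la)$. Concretely, your identity $\tfrac{1-q}{q-p}=\la$ and the bound $\mathrm{mult}_A(\la)\ge N-n-1$ re-derive Proposition~\ref{rank}, and your Perron--Frobenius count $c\le N/k(\la)$ of components with spectral radius exactly $\la$ is Corollary~\ref{pf_cor} summed over components.
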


We recall a classical fact about the spectral radius of a connected graph and prove an immediate consequence.

\begin{theorem}[Corollary of the Perron--Frobenius theorem \cite{zbMATH02624926,MR1511438}]
  For every connected graph $G$, the largest eigenvalue $\la_1(G)$ has multiplicity $1$, with an eigenvector whose components are all positive.
\end{theorem}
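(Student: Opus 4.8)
The plan is to reduce the statement to the fact that the adjacency matrix $A = A(G)$ is an entrywise non-negative symmetric matrix which, because $G$ is connected, is irreducible: for every ordered pair of vertices $u,v$ some power $A^k$ has a positive $(u,v)$-entry. One could quote the Perron--Frobenius theorem for irreducible non-negative matrices verbatim, but I would instead give the following short self-contained argument, which uses only the variational characterization of $\la_1$ together with connectedness.

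First I would produce a positive eigenvector for $\la_1 := \la_1(G)$. Let $x$ be a real eigenvector with $Ax = \la_1 x$, and let $|x|$ denote its entrywise absolute value. Since $A$ has non-negative entries,
\[
  \la_1 = \frac{x^\top A x}{x^\top x} \le \frac{|x|^\top A |x|}{|x|^\top |x|} \le \la_1,
\]
where the last inequality holds because $\la_1$ is the maximum of the Rayleigh quotient over non-zero vectors; hence $|x|$ also attains this maximum and is therefore an eigenvector, $A|x| = \la_1 |x|$ (in passing, this shows $\la_1$ is the spectral radius of $A$). If $|x|_v = 0$ for some vertex $v$, then $0 = \la_1 |x|_v = (A|x|)_v = \sum_{w \sim v} |x|_w$ forces $|x|_w = 0$ for every neighbour $w$ of $v$, and iterating along paths together with connectedness gives $|x| = 0$, absurd; so $|x| > 0$ entrywise. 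Moreover, equality in the display forces $x_v x_w = |x_v|\,|x_w|$ across each edge $vw$, i.e. adjacent entries of $x$ share a sign, and connectedness then makes all entries of $x$ of one sign, so after negating $x$ if necessary we obtain the desired positive eigenvector $x > 0$.

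Then I would establish simplicity of $\la_1$. Suppose $y$ is an eigenvector for $\la_1$ that is not a scalar multiple of $x$; we may take $y$ real. Since $x > 0$ and $y \neq 0$, we can choose a scalar $c \in \R$ so that $z := x - cy$ is non-negative and has at least one zero coordinate, while $z \neq 0$ by the linear independence of $x$ and $y$. But $z$ is again an eigenvector for $\la_1$, so the argument of the previous paragraph applied to $z$ shows $z = |z| > 0$ entrywise, contradicting that $z$ has a zero coordinate. Hence the $\la_1$-eigenspace is one-dimensional, and it is spanned by the positive vector $x$.

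The only point requiring a little care is this last step: one must check that the combination $z = x - cy$ can be arranged to be non-negative, non-zero, and to vanish in some coordinate all at once, which is precisely where the strict positivity of $x$ and the linear independence of $x$ and $y$ are both used. Everything else is a routine consequence of $A$ being symmetric, entrywise non-negative, and irreducible.
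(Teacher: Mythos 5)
Your argument is correct. Note, however, that the paper does not prove this statement at all: it is quoted as a black-box classical fact (the Perron--Frobenius theorem, with citations to Perron and Frobenius), so there is no ``paper proof'' to compare against. What you give is the standard self-contained variational proof: the Rayleigh-quotient computation showing $\abs{x}$ is again a maximizer, the connectedness argument propagating nonvanishing of coordinates along edges, the equality analysis forcing a constant sign, and the ``subtract off a multiple of the positive eigenvector until a coordinate vanishes'' trick for simplicity. All four steps are sound, and you correctly flag the one place needing care (choosing $c$ so that $z = x - cy$ is non-negative, non-zero, and vanishes somewhere, e.g.\ $c = \min\dset{x_v/y_v}{y_v > 0}$ after replacing $y$ by $-y$ if needed). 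The only cosmetic quibble is the parenthetical claim that the display ``shows $\la_1$ is the spectral radius of $A$'': as written it only shows $\abs{x}$ is a maximizer of the Rayleigh quotient; identifying $\la_1$ with the spectral radius requires running the same comparison starting from an eigenvector of the eigenvalue of largest modulus. Since that aside is not used anywhere, the proof stands.
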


\begin{corollary} \label{pf_cor}
  For every $\la > 0$ and every connected graph $G$ with spectral radius $\le \la$, $$v(G) \le \frac{k(\la)}{k(\la)-1}\cdot \rank{I - \frac{A}{\la}},$$ where $v(G)$ is the order of $G$, $k(\la)$ is the spectral radius order of $\la$, and $A$ is the adjacency matrix of $G$; in case $k(\la) = \infty$, $v(G) \le \rank{I-A/\la}$.
\end{corollary}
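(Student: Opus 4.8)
The plan is to analyse the matrix $M := I - \tfrac{A}{\la}$ directly. Its eigenvalues are $1 - \mu/\la$ as $\mu$ runs over the eigenvalues of $A$ (with multiplicity). Since $\la > 0$, $0$ is an eigenvalue of $M$ precisely when $\la$ is an eigenvalue of $A$; because $G$ is connected, $\la_1(G)$ is the largest eigenvalue of $A$ and, by the Perron--Frobenius corollary quoted above, it is simple, while $\la_1(G) \le \la$ by hypothesis. Hence $0$ is an eigenvalue of $M$ if and only if $\la_1(G) = \la$, and in that case it has multiplicity exactly $1$. Writing $n := v(G)$, this gives $\rank{I - \tfrac{A}{\la}} = n$ when $\la_1(G) < \la$, and $\rank{I - \tfrac{A}{\la}} = n - 1$ when $\la_1(G) = \la$. (Note that positive semidefiniteness of $M$ is not even needed here, only the count of its zero eigenvalues.)

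Next I would split into these two cases. If $\la_1(G) < \la$, then $v(G) = n = \rank{I - \tfrac{A}{\la}}$, which is trivially at most $\tfrac{k(\la)}{k(\la)-1}\cdot\rank{I - \tfrac{A}{\la}}$ since $\tfrac{k(\la)}{k(\la)-1} \ge 1$ (and is exactly $\rank{I - \tfrac{A}{\la}}$ in the convention $k(\la) = \infty$), so the claimed inequality holds regardless of the value of $k(\la)$. If instead $\la_1(G) = \la$, then $G$ is itself a graph of order $n$ with spectral radius exactly $\la$, so by definition of the spectral radius order we get $k(\la) \le n < \infty$. A one-line rearrangement shows $k(\la) \le n$ is equivalent to $n \le \tfrac{k(\la)}{k(\la)-1}(n-1)$, and since $\rank{I - \tfrac{A}{\la}} = n-1$ in this case, we conclude $v(G) = n \le \tfrac{k(\la)}{k(\la)-1}\cdot\rank{I-\tfrac{A}{\la}}$, as desired.

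There is essentially no hard step: the only things to be careful about are (i) invoking Perron--Frobenius to pin down the multiplicity of $0$ as an eigenvalue of $M$ (so that the rank drops by at most one, and by exactly one in the tight case), and (ii) noticing that in the tight case $\la_1(G) = \la$ the graph $G$ itself certifies $k(\la) \le v(G)$, which both rules out $k(\la) = \infty$ and supplies exactly the inequality needed. The remaining content is the elementary manipulation $k(\la) \le n \iff n(k(\la)-1) \le k(\la)(n-1)$.
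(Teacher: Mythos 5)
Your proposal is correct and follows essentially the same route as the paper: use Perron--Frobenius to see that $\la$ can be an eigenvalue of $A$ only as the simple top eigenvalue $\la_1(G)$, so $\rank{I-A/\la}$ is $v(G)$ or $v(G)-1$ according to whether $\la_1(G)<\la$ or $\la_1(G)=\la$, and in the latter case $G$ itself witnesses $k(\la)\le v(G)$, which rearranges to the stated bound. No gaps.
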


\begin{proof}
  If $\la_1(G) = \la$, the definition of $k(\la)$ implies that $v(G) \ge k(\la)$, and so the Perron--Frobenius theorem implies that $\rank{I - A/\la} = v(G) - 1 \ge (1 - 1/k(\la))v(G)$; otherwise $\la_1(G) \neq \la$, or equivalently $\la_1(G) < \la$, and so $\rank{I - A/\la} = v(G)$. Therefore when $k(\la) < \infty$ we always have $\rank{I - A/\la} \ge (1 - 1/k(\la))v(G)$; in case $k(\la) = \infty$, the definition of $k(\la)$ implies that $\la_1(G)\neq \la$, and so $\rank{I - A/\la} \ge v(G)$.
\end{proof}

The last preparation for the proof of Corollary~\ref{forb_main_cor} links $\rank{I - A / \la}$ with the dimension $n$.

\begin{proposition} \label{rank}
  Given $\al \in (0,1)$, $t\in \N$, for any spherical $L(\al, t)$-code $C$ in $\R^n$, the adjacency matrix $A$ of the underlying graph of $C$ satisfies $\rank{I - {A}/{\la}} \le n + 1$, where $\la := \frac{1-\al}{2\al}$.
\end{proposition}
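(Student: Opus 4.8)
The plan is to write $I-\frac{A}{\la}$ as a scalar multiple of $M-\frac1s J$, where $M$ is the Gram matrix of the code $C$, $J$ is the all-ones matrix, and $s:=t+\al^{-1}$, and then read off the rank bound from the fact that a Gram matrix of vectors in $\R^n$ has rank at most $n$ while $J$ has rank $1$.

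First I would record the defining identity of a spherical $L(\al,t)$-code. Write $M$ for the Gram matrix of $C$ and $A$ for the adjacency matrix of its underlying graph $G$. By definition the diagonal entries of $M$ are $1$; an off-diagonal entry corresponding to an edge of $G$ equals $-\frac1\la\bigl(1-\frac1s\bigr)+\frac1s$; and an off-diagonal entry corresponding to a non-edge equals $\frac1s$. Splitting $M$ into its diagonal, edge, and non-edge parts, i.e. writing $M=I+\bigl(-\frac1\la(1-\frac1s)+\frac1s\bigr)A+\frac1s(J-I-A)$, and collecting terms --- this is exactly the computation already carried out in the proof of Lemma~\ref{balla_cor} --- yields
\[
  \left(1+\frac{1}{s-1}\right)M = I-\frac{A}{\la}+\frac{J}{s-1},
  \qquad\text{equivalently}\qquad
  I-\frac{A}{\la} = \frac{s}{s-1}\left(M-\frac1s J\right).
\]
(The displayed relation in the proof of Lemma~\ref{balla_cor} was stated for the induced subgraph $G_0'$, but it holds verbatim for the whole code $C$, since it only uses that $C$ is a spherical $L(\al,t)$-code; and $s-1=t+\al^{-1}-1>0$ for $t\in\N$, so the scalar $\frac{s}{s-1}$ is well-defined.)

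The conclusion is then immediate. The matrix $M$ is the Gram matrix of the $\abs{C}$ unit vectors of $C$, all lying in $\R^n$, so $\rank{M}\le n$; the matrix $\frac1s J$ has rank $1$; hence by subadditivity of rank $\rank{M-\frac1s J}\le n+1$; and multiplying by the nonzero scalar $\frac{s}{s-1}$ does not change the rank, so $\rank{I-A/\la}\le n+1$, as claimed. I do not expect any genuine obstacle here: the only point requiring a little care is the algebraic identity relating $M$ and $A$, and that is already in hand from the proof of Lemma~\ref{balla_cor}.
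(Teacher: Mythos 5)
Your proposal is correct and follows essentially the same route as the paper: both establish the identity $\bigl(1+\tfrac{1}{t+\al^{-1}-1}\bigr)M = I - \tfrac{A}{\la} + \tfrac{J}{t+\al^{-1}-1}$ and conclude by rank subadditivity, $\rank{I-A/\la}\le\rank{M}+\rank{J}\le n+1$. Your version merely spells out the derivation of the identity (which the paper cites from the proof of Lemma~\ref{balla_cor}) and rearranges it slightly before taking ranks; there is no substantive difference.
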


\begin{proof}
  Let $M$ be the Gram matrix of $C$. Since $C$ is a spherical $L(\al, t)$-code, the matrices $M$ and $A$ are related by
  \[
    \left(1+\frac{1}{t+\al^{-1}-1}\right)M = I - \frac{A}{\la} + \frac{J}{t+\al^{-1}-1}.
  \]
  Therefore $\rank{I - A/\la} \le \rank{M} + \rank{J} \le n + 1$.
\end{proof}

\begin{proof}[Proof of Corollary~\ref{forb_main_cor}]
  Since $\la = \frac{1-\al}{2\al} \in (0, \la^*) \setminus \sset{\al_2, \al_3, \dots}$, by Theorem~\ref{forb_main}, there exists a finite family $\G$ of graphs such that $\F(\lambda)$ consists exactly of the connected graphs which do not contain any graph in $\G$ as a subgraph.

  Suppose $C_\al$ is a spherical $\sset{\pm\al}$-code in $\R^n$. By Lemma~\ref{balla} and Lemma~\ref{balla_cor}, we can choose $t\in \N$ depending only on $\al$ such that there exists a spherical $L(\al, t)$-code $C$ in $\R^n$ with $\abs{C_\al} \le \abs{C} + O(1)$ whose underlying graph $G$ does not contain any graph in $\G$.

  Let $G_1, \dots, G_m$ be the connected components of $G$. By the claim that $G$ does not contain any graph in $\G$, neither does $G_i$ for every $i\in[m]$. Because $\G$ is a forbidden subgraphs characterization for $\F(\la)$, the connected graph $G_i\in \F(\la)$, or in other words, $\la_1(G_i)\le\la$. Let $v(G_i)$ be the order of $G_i$ and $A_i$ the adjacency matrix of $G_i$. By Corollary~\ref{pf_cor} and Proposition~\ref{rank}, we have \[
    \abs{C} = \sum_{i=1}^m v(G_i) \le \frac{k(\la)}{k(\la)-1} \sum_{i=1}^m\rank{I - \frac{A_i}{\la}} = \frac{k(\la)}{k(\la)-1}\cdot \rank{I - \frac{A}{\la}} \le \frac{k(\la)}{k(\la)-1}\cdot (n+1).
  \]
  In case $k(\la) = \infty$, by Corollary~\ref{pf_cor} and Proposition~\ref{rank}, we have \[
    \abs{C} = \sum_{i=1}^m v(G_i) \le \sum_{i=1}^m\rank{I - \frac{A_i}{\la}} = \rank{I - \frac{A}{\la}} \le n+1. \qedhere
  \]
\end{proof}

\subsection{Proof of Theorem~\ref{equi_main}} \label{3.3}

The following lemma connects a spherical $\sset{\pm\al}$-code with a spherical $\sset{-1/\la, 0}$-code, and it provides the lower bound on $N_\al(n)$ in Theorem~\ref{equi_main}.

\begin{lemma} \label{lower_bound}
  Given $\al\in(0,1)$, let $\la := \frac{1-\al}{2\al}$. For any spherical $\sset{-1/\la, 0}$-code $C_0$ in $\R^k$, there exists a spherical $\sset{\pm\al}$-code in $\R^n$ of size $\lfloor\frac{n-1}{k}\rfloor\abs{C_0}$. In particular, the maximum number $N_\al(n)$ of equiangular lines in $\R^n$ with angle $\arccos\al$ is at least $\lfloor\frac{n-1}{k(\la)-1}\rfloor k(\la)$, where $k(\la)$ is the spectral radius order of $\la$; in case $k(\la) = \infty$, we have $N_\al(n) \ge n$.
\end{lemma}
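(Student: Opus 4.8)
The plan is to establish the lemma in two stages: first construct, from a spherical $\sset{-1/\la, 0}$-code $C_0$ of size $s$ in $\R^k$, a large spherical $\sset{\pm\al}$-code in $\R^n$; then translate this into the stated lower bounds on $N_\al(n)$ by exhibiting a small spherical $\sset{-1/\la, 0}$-code, namely one coming from an extremal graph for the spectral radius order $k(\la)$.

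\medskip
\noindent\textbf{Stage 1: the tensor/blow-up construction.} Given a unit vector $w \in C_0$ and a unit vector $e \in \R^{n}$ in a direction ``orthogonal'' to the copy of $\R^k$ we use, I would form vectors of the shape $\sqrt{\tfrac{1}{1+2\al}}\,e_i \pm \sqrt{\tfrac{2\al}{1+2\al}}\,w$, or more precisely work with the substitution that turns an inner product $-1/\la$ into $-\al$ and an inner product $0$ into $+\al$. Concretely, note $\la = \frac{1-\al}{2\al}$ so that $\frac{1}{1+2\la} = \al$; thus if $x, y$ are unit vectors with $\ip{x}{y} \in \sset{-1/\la, 0}$ then $\beta x \pm \gamma z$ for suitable scalars $\beta, \gamma$ and an auxiliary unit vector $z$ orthogonal to everything has the property that the two sign choices make a $\sset{\pm\al}$-pair, while two vectors built from distinct $w_1, w_2 \in C_0$ with the \emph{same} auxiliary vector have inner product a positive combination of $\ip{w_1}{w_2}$ and $1$, which one arranges to equal $\pm\al$. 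Taking $\lfloor\frac{n-1}{k}\rfloor$ mutually orthogonal copies of $\R^k$ inside $\R^n$ (this uses $k\lfloor\frac{n-1}{k}\rfloor \le n-1$ dimensions, leaving one extra dimension for the common auxiliary vector $z$), and within each copy taking all of $C_0$ together with its negation appropriately, yields $\lfloor\frac{n-1}{k}\rfloor \cdot \abs{C_0}$ unit vectors pairwise at inner product $\pm\al$; passing to the spanned lines gives that many equiangular lines. The bookkeeping here — verifying that \emph{all} cross inner products, both within a block and across blocks, land in $\sset{\pm\al}$ — is the one genuinely computational point, but it is a short linear-algebra check once the scalars are pinned down by the two equations ``$-1/\la \mapsto -\al$'' and ``$0 \mapsto +\al$''.

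\medskip
\noindent\textbf{Stage 2: exhibiting a small $\sset{-1/\la,0}$-code.} Let $H$ be a connected graph on $k(\la)$ vertices with $\la_1(H) = \la$ (which exists precisely when $k(\la) < \infty$), with adjacency matrix $A$. Then $I - A/\la$ is positive semidefinite, its kernel is $1$-dimensional by Perron--Frobenius, so it is the Gram matrix of a set $C_0$ of $k(\la)$ unit vectors spanning a space of dimension $k(\la)-1$; by construction $\ip{u}{v} = -1/\la$ when $uv \in E(H)$ and $0$ otherwise, i.e.\ $C_0$ is a spherical $\sset{-1/\la, 0}$-code in $\R^{k(\la)-1}$ of size $k(\la)$. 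Feeding this into Stage 1 with $k = k(\la)-1$ and $\abs{C_0} = k(\la)$ gives $N_\al(n) \ge \lfloor\frac{n-1}{k(\la)-1}\rfloor k(\la)$. In the case $k(\la) = \infty$, there is no graph realizing $\la$, but one still has the trivial spherical $\sset{-1/\la, 0}$-code of size $1$ in $\R^1$ (a single unit vector, with no pairs to constrain), so Stage 1 gives a $\sset{\pm\al}$-code of size $\lfloor\frac{n-1}{1}\rfloor = n-1$; a slightly more careful accounting — e.g.\ using a single edge, which realizes $\la_1 = 1 \le \la$, or just adding the auxiliary vector itself as a line — upgrades this to $N_\al(n) \ge n$.

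\medskip
\noindent\textbf{Main obstacle.} The only delicate point is Stage 1: one must choose the orthogonal-complement dimension and the normalizing scalars so that simultaneously (i) the two sign variants of a single $C_0$-vector form a $\sset{\pm\al}$-pair, (ii) distinct $C_0$-vectors in the same block still give inner product in $\sset{\pm\al}$, and (iii) vectors from different blocks give inner product in $\sset{\pm\al}$ — and this must be consistent with a single global auxiliary vector. I expect the clean way is \emph{not} to introduce per-vector sign flips at all, but rather to observe that an inner product of $0$ or $-1/\la$ can be shifted to $\al$ or $-\al$ uniformly by adding a fixed rank-one piece $\al\, zz^{\mathsf T}$ (equivalently, appending the same coordinate $\sqrt{\al}$ to every vector and rescaling), which is exactly the reverse of the projection step in Lemma~\ref{balla}; then the block structure is just an orthogonal direct sum and cross-block inner products are automatically $\al > 0 \in \sset{\pm\al}$ after the shift. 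Making that shift land all values in $\sset{\pm\al}$ with the correct normalization, and checking the dimension count $k\lfloor\frac{n-1}{k}\rfloor + 1 \le n$, is where the care is needed; everything else is formal.
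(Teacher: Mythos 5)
Your proposal is essentially the paper's proof: the ``clean way'' you settle on in your final paragraph --- rescale the code by $\sqrt{1-\al}$, take $m=\lfloor\frac{n-1}{k}\rfloor$ mutually orthogonal copies, and shift all inner products by the rank-one piece $\al\,zz^{\mathsf T}$ via a common coordinate $\sqrt{\al}$ --- is exactly the Gram matrix $(1-\al)M_0\otimes I_m+\al J$ used in the paper (indeed $(1-\al)(-1/\la)+\al=-\al$ and $(1-\al)\cdot 0+\al=\al$, with rank at most $mk+1\le n$), and your Stage 2, realizing $I-A/\la$ for a minimal graph of spectral radius $\la$ as a $\sset{-1/\la,0}$-code of size $k(\la)$ in $\R^{k(\la)-1}$, is identical to the paper's.

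The one detail that does not survive scrutiny is the $k(\la)=\infty$ endgame. A single edge gives $I-A/\la$ of rank $2$ (eigenvalues $1\pm 1/\la$, both nonzero since $\la\ne 1$ here), hence a code of size $2$ in $\R^2$ and only $2\lfloor\frac{n-1}{2}\rfloor\le n-1$ lines; and the auxiliary vector $z$ has inner product $\sqrt{\al}\ne\pm\al$ with each constructed vector $\sqrt{1-\al}\,v\oplus\sqrt{\al}\,z$, so it cannot be adjoined as an extra line. Neither of your suggested upgrades works as stated. The correct (and trivial) fix is the paper's: $(1-\al)I_n+\al J_n$ is positive definite, hence of rank exactly $n$, so it is directly the Gram matrix of $n$ unit vectors in $\R^n$ with pairwise inner product $\al$; equivalently, for the size-one base code the generic rank bound $mk+1$ is not tight and one may take $n$ blocks rather than $n-1$.
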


\begin{proof}
  Given a spherical $\sset{-1/\la, 0}$-code $C_0$ in $\R^k$. Let $m := \lfloor\frac{n-1}{k}\rfloor$ and $M_{0}$ be the Gram matrix of $C_0$. Consider the matrix $M := (1-\al) M_0 \otimes I_m + \al J$ of order $m\abs{C_0}$. For both $M_0$ and $J$ are positive semidefinite, so is $M$. The rank of $M$ is at most $m\cdot \rank{M_0} + 1 \le mk+1 \le n$. Moreover, the diagonal entries of $M$ are ones and its off-diagonal entries are either $-\al$ or $\al$. Therefore $M$ can be realized as the Gram matrix of a spherical $\sset{\pm\al}$-code of size $m\abs{C_0}$ in $\R^n$.

  It suffices to construct a spherical $\sset{-1/\la, 0}$-code $C_0$ in $\R^{k(\la)-1}$ of size $k(\la)$. Because $\la$ is the spectral radius of a graph $G$ on $k(\la)$ vertices, $I - A/\la$ is positive semidefinite, where $A$ is the adjacency matrix of $G$, and $\rank{I - A/\la} \le k(\la)-1$. Clearly, $I-A/\la$ can be realized as the Gram matrix of a spherical $\sset{-1/\la, 0}$-code of size $k(\la)$ in $\R^{k(\la)-1}$.

  In case $k(\la) = \infty$, $(1-\al)I_n + \al J_n$ can be realized as the Gram matrix of a spherical $\sset{\pm\al}$-code of size $n$ in $\R^n$.
\end{proof}

In view of Corollary~\ref{forb_main_cor} and Lemma~\ref{lower_bound}, we are left to prove Theorem~\ref{equi_main} for the exceptional $\la \in \sset{\al_2,\al_3,\dots} \cup \sset{\la^*}$. Clearly, if $\la$ is an eigenvalue of the adjacency matrix of a graph, then
\begin{enumerate}[nosep]
  \item $\la$ is an algebraic integer --- it is a root of some monic polynomial with coefficients in $\Z$,
  \item $\la$ is totally real --- its conjugate elements are in $\R$.
\end{enumerate}
On the converse, it follows from Estes~\cite[Theorem~1]{MR1189507} that any totally real algebraic integer occurs as an eigenvalue of the adjacency matrix of a graph.

When $\la$ is not a totally real algebraic integer, the spectral radius order $k(\la) = \infty$, and Conjecture~\ref{main_conj} predicts that $N_\al(n) = n + O(1)$, which we prove in the affirmative.

\begin{proposition} \label{equi_not_real}
  Given $\al\in(0,1)$, let $\la := \frac{1-\al}{2\al}$. If $\la$ is not a totally real algebraic integer, then $n \le N_\al(n) \le n+1$.
\end{proposition}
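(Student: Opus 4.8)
The plan is to handle the two inequalities separately; the lower bound $N_\al(n) \ge n$ is already immediate from the last sentence of the proof of Lemma~\ref{lower_bound} (the code realized by $(1-\al)I_n + \al J_n$), since when $\la$ is not a totally real algebraic integer it is in particular not an eigenvalue of any adjacency matrix, hence $k(\la) = \infty$. So the real content is the upper bound $N_\al(n) \le n+1$.

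For the upper bound I would run the same pipeline as in Corollary~\ref{forb_main_cor}, but bypass Theorem~\ref{forb_main} entirely --- which is good, because here $\la$ may well lie outside $(0,\la^*) \setminus \sset{\al_2, \al_3, \dots}$ and no finite forbidden-subgraph characterization need exist. The key observation is that $k(\la) = \infty$ means \emph{every} connected graph $G$ has $\la_1(G) \neq \la$: indeed if some connected $G$ had $\la_1(G) = \la$ then $\la$ would be a totally real algebraic integer, contradiction. So I do not need to restrict to graphs in $\F(\la)$ at all. Starting from a spherical $\sset{\pm\al}$-code $C_\al$ in $\R^n$, pick $t \in \N$ (depending only on $\al$, large enough for Lemma~\ref{balla} to apply, e.g. $t > \la^2 + 1$) and invoke Lemma~\ref{balla} to get a spherical $L(\al,t)$-code $C$ with $\abs{C_\al} \le \abs{C} + O(1)$. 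Let $G$ be the underlying graph of $C$, with connected components $G_1, \dots, G_m$ and adjacency matrices $A_1, \dots, A_m$. Since $\la$ is not the spectral radius of any graph, for each component we have $\la_1(G_i) \neq \la$, so $I - A_i/\la$ is \emph{nonsingular} and $\rank{I - A_i/\la} = v(G_i)$ --- this is exactly the "$k(\la) = \infty$" branch of Corollary~\ref{pf_cor}, which applies unconditionally here. Summing and applying Proposition~\ref{rank},
\[
  \abs{C} = \sum_{i=1}^m v(G_i) = \sum_{i=1}^m \rank{I - \frac{A_i}{\la}} = \rank{I - \frac{A}{\la}} \le n+1,
\]
so $N_\al(n) = \abs{C_\al} \le \abs{C} + O(1) \le n + O(1)$.

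The remaining issue is that the argument above only gives $n + O(1)$, whereas the proposition claims the sharp $n+1$. To upgrade, I would avoid Lemma~\ref{balla} (which costs $O(1)$ in the passage $C_\al \rightsquigarrow C$) and instead work directly with the Gram matrix $M_\al$ of $C_\al$ itself. Writing $A$ for the adjacency matrix of the underlying graph $G$ of $C_\al$, the $\sset{\pm\al}$ condition gives the exact identity $M_\al = (1-\al) I - \al A + \al (J - \text{something})$; more precisely $M_\al = I - \al(2A - (J - I))$ when we account for signs, so $\frac{1}{2\al} M_\al - \frac{1}{2\al} I + \frac{1}{2}(J-I)$ is a scalar multiple of something — the cleanest version is that $I - A/\la$ and $M_\al$ and $J$ satisfy a linear relation with $\rank{I - A/\la} \le \rank{M_\al} + \rank{J} \le n + 1$ directly on $C_\al$. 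Then, decomposing $G$ into connected components and using $\la_1(G_i) \neq \la$ for each (so $I - A_i/\la$ is full rank $v(G_i)$), we get $\abs{C_\al} = \sum v(G_i) = \rank{I - A/\la} \le n+1$ with no error term at all. The main obstacle is bookkeeping the precise linear relation among $M_\al$, $I$, $A$, and $J$ with the correct constants so that the rank bound comes out as exactly $n+1$ rather than $n + O(1)$; once that identity is pinned down, everything else is a one-line rank inequality plus the Perron--Frobenius observation that each component contributes its full order to the rank.
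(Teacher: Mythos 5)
Your second, ``upgraded'' argument is exactly the paper's proof, and the ``main obstacle'' you worry about is not one: the identity you wrote, $M_\al = I - \al\left(2A - (J-I)\right)$, is correct and rearranges to $M_\al = (1-\al)\left(I - A/\la\right) + \al J$, whence $n \ge \rank{M_\al} \ge \rank{I - A/\la} - \rank{J} = \abs{C_\al} - 1$. One point to fix in your justification of $\rank{I - A/\la} = \abs{C_\al}$: arguing componentwise from $\la_1(G_i) \neq \la$ is not enough, since $\la$ could a priori be a \emph{non-maximal} eigenvalue of some $A_i$ (Corollary~\ref{pf_cor} rules this out only under its hypothesis $\la_1(G_i) \le \la$, which you do not have here); the correct and simpler justification, which you already state in your first paragraph, is that every eigenvalue of a symmetric integer matrix is a totally real algebraic integer, so $\la$ is not an eigenvalue of $A$ at all and $I - A/\la$ is nonsingular globally, with no component decomposition needed. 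Your first argument via Lemma~\ref{balla} is a correct but unnecessary detour that, as you note, only yields $n + O(1)$.
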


\begin{proof}
  Let $C$ be a spherical $\sset{\pm\al}$-code in $\R^n$. Let $M$ be its Gram matrix, and $A$ the adjacency matrix of the underlying graph $G$. We know that $M = (1-\al)(I - A/\la) + \al J$. Since $\la$ is not an eigenvalue of $A$, $\rank{I-A/\la} = \abs{C}$ and so $n \ge \rank{M} \ge \rank{I - A/\la} - \rank{J} = \abs{C}-1$. Together with Lemma~\ref{lower_bound}, we have $n \le N_\al(n) \le n+1$.
\end{proof}

Observe that one of the conjugate elements $\sqrt{2-\sqrt{5}}$ of $\la^* = \sqrt{2 + \sqrt{5}}$ is not real. Proposition~\ref{equi_not_real} readily gives the proof of Theorem~\ref{equi_main} for $\la = \la^*$. The rest of the cases $\la \in \sset{\al_2, \al_3, \dots}$ in Theorem~\ref{equi_main} follow similarly from the following proposition, the proof of which is due in Appendix~\ref{prop_not_real}.

\begin{proposition} \label{not_real}
  The algebraic integers $\al_2, \al_3, \dots$, defined as in Theorem~\ref{forb_main}, are not totally real.
\end{proposition}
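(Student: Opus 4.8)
The plan is to show that each $\al_m$ ($m \ge 2$) has a conjugate that is not real by working directly with the defining equation. Recall $\al_m = \beta_m^{1/2} + \beta_m^{-1/2}$ where $\beta_m > 1$ is the largest root of $p_m(x) := x^{m+1} - (1 + x + \dots + x^{m-1})$. First I would record the relation between the minimal polynomial of $\al_m$ and that of $\beta_m$: if $\gamma$ is any root of the minimal polynomial of $\beta_m$ over $\Q$, then $\gamma^{1/2} + \gamma^{-1/2}$ (for either choice of square root) is a root of the minimal polynomial of $\al_m$, via the substitution $x \mapsto y^2$ followed by $y \mapsto y + 1/y$ — concretely, $\al_m$ and its conjugates are the numbers $y + y^{-1}$ as $y^2$ ranges over the conjugates of $\beta_m$. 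Hence it suffices to produce a conjugate $\gamma$ of $\beta_m$ such that $\gamma^{1/2} + \gamma^{-1/2} \notin \R$ for \emph{both} square roots; a clean sufficient condition is that $\gamma$ is a non-real complex number, or that $\gamma$ is a negative real (so $\gamma^{1/2}$ is purely imaginary and $\gamma^{1/2} + \gamma^{-1/2}$ is purely imaginary and nonzero).

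The heart of the matter is therefore a statement about the roots of $p_m$: I must show $p_m$ (or rather the minimal polynomial of $\beta_m$, which is $p_m$ up to the cyclotomic-type factor coming from $p_m(x) = \frac{x^{m+2} - 2x^{m+1} + 1}{x - 1}$) has a root that is either non-real or negative real. Rewriting, $\beta_m$ and its conjugates are among the roots of $q_m(x) := x^{m+2} - 2x^{m+1} + 1$ other than $x = 1$. I would analyze $q_m$: it has $\beta_m$ as its unique root $>1$, it has one root in $(0,1)$ (since $q_m(0) = 1 > 0$ and $q_m(1) = 0$, and a sign/derivative analysis pins down the behaviour near $1$), and the remaining $m$ roots must account for the rest. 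Counting real roots of $q_m$ via Descartes' rule or by examining $q_m$ on $(-\infty, 0)$ and $(0,1)$ and $(1,\infty)$, one finds $q_m$ cannot have all roots real and positive; in fact for $m \ge 2$ it has either a pair of non-real roots or a negative root. I expect the cleanest route is: show $q_m$ has at most a bounded number of real roots (say, at most $3$ or so, independent of $m$, by bounding the number of sign changes of $q_m$ and $q_m'$), so that for $m$ large the polynomial $q_m$ of degree $m+2$ must have non-real roots; then handle the finitely many small $m$ by hand. Once a non-real or negative conjugate $\gamma \ne 1$ of $\beta_m$ is located, the corresponding conjugate of $\al_m$ is non-real, completing the argument.

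A subtlety I would be careful about: I need $\gamma$ to be an actual Galois conjugate of $\beta_m$, i.e. a root of the \emph{minimal} polynomial of $\beta_m$, not merely a root of $q_m$. So I should either prove $q_m/(x-1)$ is irreducible (plausibly via a Perron-type irreducibility criterion, since $q_m$ is close to $x^{m+2} - 2x^{m+1}$ which has a dominant coefficient), or, more robustly, argue that the bad root $\gamma$ lies in the same irreducible factor as $\beta_m$ — for instance by showing every irreducible factor of $q_m/(x-1)$ other than the one containing $\beta_m$ would have all its roots of absolute value $\le 1$ with product of absolute values equal to a reciprocal-of-integer, forcing a contradiction, or simply by verifying irreducibility for small $m$ and invoking a uniform criterion for large $m$.

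The main obstacle will be the irreducibility/conjugacy bookkeeping in the previous paragraph, together with pinning down precisely \emph{where} the non-real or negative root of $q_m$ sits as a function of $m$ — the real-root count of $q_m$ needs to be controlled uniformly in $m$, which is the step most likely to require a genuine (if short) argument rather than a routine estimate. Everything else — the $x \mapsto y + 1/y$ transfer between the minimal polynomials of $\beta_m$ and $\al_m$, and the observation that a negative or non-real $\gamma$ yields a non-real $\gamma^{1/2} + \gamma^{-1/2}$ — is elementary.
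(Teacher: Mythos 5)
There is a genuine gap, and also a concrete algebra error. First the error: your factorization identity is wrong. Since $p_m(x) = x^{m+1} - \frac{x^m-1}{x-1}$, clearing denominators gives $(x-1)p_m(x) = x^{m+2} - x^{m+1} - x^m + 1$, not $x^{m+2}-2x^{m+1}+1$. Your Descartes/sign analysis is therefore carried out on the wrong polynomial. (A corrected count still bounds the number of real roots of the right polynomial uniformly, so this part is repairable.) A second, smaller slip: the negative real root that actually occurs (for odd $m$, $p_m(-1)=0$) is exactly $\gamma=-1$, for which $\gamma^{1/2}+\gamma^{-1/2} = i + (-i) = 0$ is \emph{not} ``purely imaginary and nonzero''; so the negative-root branch of your sufficient condition is precisely the degenerate case and gives you nothing.

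The substantive gap is the one you yourself flag and then leave unresolved: you must show that a non-real root of $p_m$ is a genuine Galois conjugate of $\beta_m$, i.e.\ lies in the same irreducible factor. This is not routine bookkeeping: $p_m$ is honestly reducible for every odd $m$ (it is divisible by $x+1$), so ``prove $p_m$ is irreducible'' fails outright, and the Kronecker-type argument you sketch (all other factors have roots on the unit circle, hence are cyclotomic) would itself require proving that every root of $p_m$ other than $\beta_m$ has modulus $\le 1$, which you do not attempt. The paper's proof sidesteps the entire issue by arguing in the opposite direction. Using a symmetric-function lemma, every conjugate $\al'$ of $\al_m$ is of the form $\pm(\gamma_i+\gamma_i^{-1})$ with $\gamma_i^2=\beta_i$ a root of $p_m$ --- one never needs to know which of these values actually occur as conjugates. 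If such an $\al'$ with $i\ne 0$ is real, then since $\gamma_i\notin\R$ one gets $|\beta_i|=1$; feeding this into the rearranged defining relation $\beta_i^{-(m+1)} = 1-(\beta_i-\beta_i^{-1}) = 1-2i\sin\theta$ forces $\sin\theta=0$, hence $\beta_i=-1$ and $\al'=0$, which cannot be a conjugate of $\al_m$. So every \emph{real} conjugate of $\al_m$ is $\pm\al_m$, and if $\al_m$ were totally real its degree would be at most $2$, contradicting $2<\al_m<\la^*<\sqrt5$. To complete your write-up you would either need to supply the missing irreducibility/conjugacy argument or switch to this contrapositive structure.
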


\subsection{An improved upper bound on $N_\al(n)$} \label{3.4}

In this section, we prove $N_\al(n) \le 1.49n+O(1)$ for every $\al \neq 1/3, 1/5, 1/(1+2\sqrt{2})$. We recall the following spectral tool bounding the rank of a symmetric matrix in terms of its trace and the trace of its square.

\begin{lemma} \label{balla1}
  The rank of a symmetric matrix $A$ is greater than or equal to ${(\tr{A})^2}/{\tr{A^2}}$.
\end{lemma}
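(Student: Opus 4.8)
The statement to prove is Lemma~\ref{balla1}: for a symmetric matrix $A$, $\rank{A} \ge (\tr{A})^2/\tr{A^2}$. The natural approach is to diagonalize $A$ and apply the Cauchy--Schwarz inequality. Since $A$ is real symmetric, let its nonzero eigenvalues be $\mu_1, \dots, \mu_r$ where $r = \rank{A}$ (counted with multiplicity), and note that the zero eigenvalues contribute nothing to either trace. Then $\tr{A} = \sum_{i=1}^r \mu_i$ and $\tr{A^2} = \sum_{i=1}^r \mu_i^2$.

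First I would observe that if $\tr{A^2} = 0$ then all $\mu_i = 0$, so $A = 0$, $\tr{A} = 0$, and the inequality holds trivially (interpreting $0/0$ appropriately, or simply excluding this degenerate case). Assuming $\tr{A^2} > 0$, apply Cauchy--Schwarz to the vectors $(\mu_1, \dots, \mu_r)$ and $(1, \dots, 1) \in \R^r$:
\[
  (\tr{A})^2 = \left(\sum_{i=1}^r \mu_i \cdot 1\right)^2 \le \left(\sum_{i=1}^r \mu_i^2\right)\left(\sum_{i=1}^r 1^2\right) = \tr{A^2}\cdot r.
\]
Rearranging gives $r \ge (\tr{A})^2/\tr{A^2}$, which is exactly the claim.

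There is no real obstacle here; the only point requiring a word of care is the degenerate case $A = 0$ and the bookkeeping that the rank equals the number of nonzero eigenvalues with multiplicity (which is where real symmetry, hence orthogonal diagonalizability, is used). Everything else is a one-line application of Cauchy--Schwarz.
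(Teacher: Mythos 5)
Your proof is correct and is essentially the same short argument the paper cites (Lemma~2.9 of Balla et al.): diagonalize the symmetric matrix, note that the rank is the number of nonzero eigenvalues, and apply Cauchy--Schwarz to the vector of nonzero eigenvalues against the all-ones vector. The degenerate case $A=0$ is handled appropriately, so nothing further is needed.
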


This lemma appeared as Inequality 14 of Bellman~\cite[page 137]{MR1455129} in the form of an exercise. We refer the readers to \cite[Lemma~2.9]{MR3742757} for a short proof of Lemma~\ref{balla1}. Additionally, we develop a spectral result, which can be seen as an averaged version of the following lemma.

\begin{lemma}[Lemma~2.13 of Balla et al.~\cite{MR3742757}] \label{balla2}
  Let $G$ be a graph with minimum degree $\delta \ge 2$. Let $v_0$ be a vertex of $G$ and let $H$ be the subgraph consisting of all vertices within distance $k$ of $v_0$. Then $\la_1(H)\ge \frac{2k}{k+1}\sqrt{\delta-1}$.
\end{lemma}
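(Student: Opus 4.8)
The plan is to bound $\la_1(H)$ from below by the spectral radius of a ball in an infinite branching tree, where the estimate becomes elementary. Let $T$ be the rooted tree in which the root $r$ has $\delta$ children and every other vertex has exactly $\delta-1$ children, so that every vertex of $T$ has degree $\delta$; let $B_k$ denote the finite subtree of $T$ induced by the vertices at distance at most $k$ from $r$. The argument splits into two independent parts: $\la_1(H)\ge\la_1(B_k)$, and $\la_1(B_k)\ge\frac{2k}{k+1}\sqrt{\delta-1}$.

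For the first part I would build a homomorphism $\iota\colon B_k\to H$ with $\iota(r)=v_0$ that is injective on the closed neighbourhood of every vertex of $B_k$. Construct it greedily along the levels of $B_k$: send $r$ to $v_0$; once a non-root vertex $w$ has been sent to $\iota(w)$ and its parent to some neighbour $\iota(p)$ of $\iota(w)$, send the (at most $\delta-1$) children of $w$ to pairwise distinct neighbours of $\iota(w)$ other than $\iota(p)$, which is possible since $\iota(w)$ has at least $\delta$ neighbours in $G$; as $B_k$ is a tree, no conflict ever arises. Such an $\iota$ takes edges to edges, hence cannot increase graph distance, so $\iota(B_k)\subseteq H$. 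Now $\la_1$ is monotone under such maps: for a fixed vertex of $B_k$, the induced map on walks of a given length issuing from it is injective, because at each step the next vertex is recovered from its image by local injectivity of $\iota$; hence $\tr{A_{B_k}^{2\ell}}\le\bigl(\max_{w}\abs{\iota^{-1}(w)}\bigr)\cdot\tr{A_{H}^{2\ell}}$ for every $\ell\in\N$, and letting $\ell\to\infty$ in $\la_1(\Gamma)=\lim_{\ell}\tr{A_{\Gamma}^{2\ell}}^{1/(2\ell)}$ gives $\la_1(B_k)\le\la_1(H)$.

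For the second part I would use a test vector on $B_k$ that is constant on levels. Let $u=(u_0,\dots,u_k)$ be the positive Perron eigenvector of the path $P_{k+1}$, and set $x_w:=(\delta-1)^{-i/2}u_i$ for each vertex $w$ at distance $i$ from $r$. Since level $i\ge1$ of $B_k$ consists of $\delta(\delta-1)^{i-1}$ vertices, each joined by a single edge to level $i-1$, one computes
\[
  x^{\top}x = u_0^2+\frac{\delta}{\delta-1}\sum_{i=1}^{k}u_i^2,\qquad
  x^{\top}A_{B_k}x = \frac{2\delta}{\sqrt{\delta-1}}\sum_{i=0}^{k-1}u_iu_{i+1},
\]
and therefore, using $\tfrac{\delta-1}{\delta}<1$, $u>0$, and $A_{P_{k+1}}u=2\cos\tfrac{\pi}{k+2}\,u$ from Lemma~\ref{abcdef}(p),
\begin{align*}
  \frac{x^{\top}A_{B_k}x}{x^{\top}x}
  &= \frac{2\sqrt{\delta-1}\,\sum_{i=0}^{k-1}u_iu_{i+1}}{\tfrac{\delta-1}{\delta}u_0^2+\sum_{i=1}^{k}u_i^2}\\
  &\ge \frac{2\sqrt{\delta-1}\,\sum_{i=0}^{k-1}u_iu_{i+1}}{\sum_{i=0}^{k}u_i^2}
  = 2\sqrt{\delta-1}\,\cos\tfrac{\pi}{k+2}.
\end{align*}
Thus $\la_1(B_k)\ge2\sqrt{\delta-1}\cos\tfrac{\pi}{k+2}$, and it remains only to verify the elementary inequality $2\cos\tfrac{\pi}{k+2}\ge\tfrac{2k}{k+1}$ for every integer $k\ge1$: equality holds at $k=1$, and for $k\ge2$ it follows from $\cos\theta\ge1-\theta^2/2$ together with $2(k+2)^2\ge\pi^2(k+1)$.

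The one genuine obstacle is the comparison $\la_1(H)\ge\la_1(B_k)$. Because $G$ carries no girth hypothesis, $H$ need not contain $B_k$ as a subgraph, so the test vector $x$ cannot be transported directly to $H$ — summing $x$ over the fibres of a would-be map enlarges the norm in the wrong direction. Routing the comparison through closed-walk (trace) counts for the immersion $\iota$ is precisely what makes it robust to short cycles; everything else is routine.
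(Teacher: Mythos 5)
Your proposal is correct, but note that the paper never proves this lemma at all: it is quoted from Balla, Dr\"axler, Keevash and Sudakov, and what the paper actually proves is the averaged strengthening (Lemma~\ref{local}), where only the average degree $d$ is available. Compared with that proof, your argument is genuinely different in mechanism though similar in spirit. You exploit the minimum-degree hypothesis to immerse the radius-$k$ ball $B_k$ of the $\delta$-regular tree into $H$ via a locally injective homomorphism, transfer spectral radii through closed-walk counts (which, as you rightly stress, is what makes the comparison immune to short cycles -- the same role played in the paper by the forest $T_e$ of non-backtracking walks), and then lower-bound $\la_1(B_k)$ with a level-constant test vector $(\delta-1)^{-i/2}u_i$ built from the Perron vector of $P_{k+1}$. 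The paper cannot embed or immerse a regular tree ball, since average degree gives no control at individual vertices, so it instead works on the non-backtracking-walk forest of all of $G$ and replaces your exact level counts by importance sampling plus Jensen's inequality applied to $x\mapsto x\sqrt{x-1}$; that machinery is exactly the price of averaging. What your route buys is a short, self-contained proof of the minimum-degree statement that in fact yields the stronger constant $2\cos\bigl(\tfrac{\pi}{k+2}\bigr)\sqrt{\delta-1}$ (the coefficient of Lemma~\ref{local}) rather than $\tfrac{2k}{k+1}\sqrt{\delta-1}$; your final calibration is also handled correctly, with equality at $k=1$ and the quadratic estimate $2(k+2)^2\ge\pi^2(k+1)$ valid precisely for $k\ge 2$. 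The only steps worth writing out in full detail are the ones you already flag: that images of vertices at tree-distance $i\le k$ from the root lie within distance $k$ of $v_0$, hence in $H$, and that local injectivity on closed neighbourhoods lets you reconstruct a walk in $B_k$ from its starting vertex and its image walk, giving $\tr{A_{B_k}^{2\ell}}\le\max_u\abs{\iota^{-1}(u)}\cdot\tr{A_H^{2\ell}}$ and hence $\la_1(B_k)\le\la_1(H)$ in the limit.
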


\begin{lemma} \label{local}
  Let $G$ be a graph with average degree $d \ge 2$. There exists a vertex $v_0$ of $G$ such that $\la_1(H) \ge 2\cos(\tfrac{\pi}{k+2})\sqrt{d-1}$, where $H$ is the subgraph consisting of all vertices within distance $k$ of $v_0$.
\end{lemma}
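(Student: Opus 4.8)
The plan is to mimic the proof of Lemma~\ref{balla2}, but to extract the starting vertex $v_0$ by an averaging argument over the whole graph rather than taking any fixed vertex, and to replace the crude lower bound $\frac{2k}{k+1}$ on the Perron eigenvalue of a path-like structure by the sharp value $2\cos(\frac{\pi}{k+2})$, which is exactly the spectral radius $\la_1(P_{k+1})$ recorded in Lemma~\ref{abcdef}(p). First I would recall the standard lower bound on $\la_1$ via the Rayleigh quotient: for any nonnegative vector $\bm{x}$ supported on $V(H)$, $\la_1(H) \ge \bm{x}^\top A_H \bm{x} / (\bm{x}^\top \bm{x})$. The idea is to build $\bm{x}$ as a superposition of ``path eigenvector'' weights: if $f_1, \dots, f_{k+1}$ denote the entries of the Perron eigenvector of the path $P_{k+1}$ (so $f_j = \sin(\frac{j\pi}{k+2})$ up to scaling, with $\sum_j f_j f_{j'} [\,j\sim j'\,] = 2\cos(\frac{\pi}{k+2}) \sum_j f_j^2$), then assigning to each vertex $u$ of $H$ a weight built from $f_{1+\mathrm{dist}(v_0,u)}$ (or a sum of such contributions coming from the neighbours at the previous level) produces a test vector whose Rayleigh quotient is governed by how many edges go ``downward'' from each level — which is what the average degree controls.

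The key steps, in order, would be: (1) Reduce to the case where $G$ has minimum degree $\ge 2$ after iteratively deleting vertices of degree $\le 1$; since the average degree is $d \ge 2$, a leaf-stripping argument shows the remaining graph is nonempty and still has average degree $\ge d$ (deleting a vertex of degree $\le 1$ does not decrease the average degree when $d \ge 2$), and its spectral radius is a lower bound for that of $G$; so we may assume $\delta(G) \ge 2$ and $d(G) \ge d$. (2) Consider the rooted breadth-first layering from a vertex $v$: let $N_i(v)$ be the set of vertices at distance exactly $i$ from $v$, and let $e_i(v)$ count edges between layer $i$ and layer $i+1$ together with edges inside layer $i$. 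The total edge count is $\ge \frac{d}{2}|V|$, so on average (over $v$, or over a cleverly chosen $v$) the first few layers capture enough edges. (3) For the chosen $v_0$, with $H$ the ball of radius $k$, take the test vector $x_u = f_{1+\mathrm{dist}(v_0,u)} / \sqrt{|N_{\mathrm{dist}(v_0,u)}(v_0)|}$ or a similar normalization, and estimate $\bm{x}^\top A_H \bm{x}$ from below by counting, layer by layer, the contribution of down-edges and using that on average each vertex has $\ge d-1$ neighbours not in the previous layer (because $\delta \ge 2$ lets us ``pay'' for the up-edge). (4) Conclude $\la_1(H) \ge 2\cos(\frac{\pi}{k+2})\sqrt{d-1}$ by comparing the resulting quotient with the corresponding quantity for the infinite $(d-1)$-ary-ish tree truncated at depth $k$, whose relevant eigenvalue is exactly $2\cos(\frac{\pi}{k+2})\sqrt{d-1}$.

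The main obstacle is step (2)–(3): choosing the root $v_0$ so that the layers $N_1(v_0), \dots, N_k(v_0)$ grow at least geometrically at rate $\ge d-1$ in the relevant weighted sense. Unlike the minimum-degree hypothesis in Lemma~\ref{balla2}, an average-degree hypothesis allows local sparse spots, so a single bad choice of root can fail; the fix is a counting/averaging argument showing that for a positive fraction of roots (or for the root maximizing a suitable potential) the down-degree sequence dominates what one needs, and then to absorb the deficiency using the weights $f_j$, which are decreasing near the boundary layer $j = k+1$ and hence forgive a shortfall of edges far from the root. Handling the boundary layer carefully — where the test vector must be truncated — is exactly where the constant $2\cos(\frac{\pi}{k+2})$ rather than $\frac{2k}{k+1}$ appears, since $2\cos(\frac{\pi}{k+2}) = \la_1(P_{k+1})$ is the optimal Rayleigh quotient for a length-$(k+1)$ chain of weights vanishing at one end. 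Once the weighted layer-growth estimate is in hand, the rest is a direct Rayleigh-quotient computation.
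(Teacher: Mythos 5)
Your overall plan --- strip leaves to get minimum degree $\ge 2$, use the Perron eigenvector $\bigl(\sin\tfrac{j\pi}{k+2}\bigr)_j$ of $P_{k+1}$ as level weights, and average over roots --- is the right intuition, and the leaf-stripping reduction is exactly the paper's first step. But the step you yourself flag as ``the main obstacle'' is in fact the entire content of the lemma, and the fix you sketch does not close it. Running the Rayleigh quotient directly on the BFS ball $H \subseteq G$ with a test vector $x_u = f_{1+\mathrm{dist}(v_0,u)}/\sqrt{\abs{N_i}}$ fails for two reasons: first, edges of $G$ inside a layer or going back to the previous layer are not controlled by the average degree, so the level-by-level bookkeeping does not reduce to a clean ``$d-1$ downward edges per vertex''; second, under an average-degree hypothesis no single root need have geometrically growing layers, and the claim that ``a positive fraction of roots'' works is not something one can extract by a simple counting argument --- low-degree bottlenecks near a root can kill the growth for many roots simultaneously, and the deficit cannot in general be absorbed by the decay of $f_j$ near $j=k+1$.

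The paper's proof resolves this with a mechanism absent from your proposal: it lifts $G$ to the forest $T$ of non-backtracking walks of length $\le k+1$, one component $T_e$ per directed edge $e=(v_{-1},v_0)$, shows $\la_1(T_e)\le\la_1(G_{v_0})$ by an injection of closed walks (not by a subgraph or Rayleigh comparison), and then bounds $\la_1(T)$ from below by a \emph{single global} Rayleigh quotient over all components at once. The test vector is $f(w)=x_i\sqrt{p(w)}$, where $p(w)$ is the probability of the walk $w$ under the non-backtracking random walk started uniformly on directed edges; importance sampling converts sums over $W_i$ into expectations, and the crucial fact that the directed-edge marginal stays uniform at every step means each level contributes $\sum_v \tfrac{d(v)}{d\abs{V(G)}}\sqrt{d(v)-1}\ge\sqrt{d-1}$ by Jensen applied to the convex function $x\mapsto x\sqrt{x-1}$. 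The good root $v_0$ then falls out of $\la_1(T)=\max_e\la_1(T_e)$. Without this tree lift and the stationarity/convexity argument (or an equivalent device), your outline remains a plausible plan with its central step unproved.
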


By Lemma~\ref{abcdef}(p), the coefficient $2\cos(\tfrac{\pi}{k+2})$ in Lemma~\ref{local} equals $\la_1(P_{k+1})$, where $P_{k+1}$ is the path with $k$ edges. Notice that $\la_1(P_{k+1})$ is greater than the average degree $\frac{2k}{k+1}$ of $P_{k+1}$, which is the coefficient in Lemma~\ref{balla2}. This comparison between Lemma~\ref{balla2} and Lemma~\ref{local} is reminiscent of that between Nilli's~\cite{MR1124768} and Friedman's~\cite[Corollary~3.7]{MR1208809} (see also \cite{MR2056091}) proofs of the Alon--Boppana bound on the second largest eigenvalue of a regular graph. In fact, Lemma~\ref{local} was recently applied~\cite[Theorem~8]{MR3926279} to improve Hoory's bound~\cite[Theorem~3]{MR2102266} on the second largest eigenvalue for a class of graphs that are not necessarily regular.

\begin{proof}
  Since removing leaf vertices from a graph of average degree $d \ge 2$ cannot decrease its average degree, without loss of generality, we may assume that the minimum degree of $G$ is $\ge 2$. A walk $(v_{-1}, v_0, v_1, \dots)$ on $G$ is non-backtracking if $v_i \neq v_{i+2}$ for all $i$. For all $i \ge 0$, define $W_i$ to be the set of all non-backtracking walks $(v_{-1}, v_0, v_1, \dots, v_i)$ on $G$ of length $i + 1$. Define the forest $T$ as follows: the vertex set is $\cup_{i=0}^k W_i$ and two vertices are adjacent if and only if one is a simple extension of the other. For every $e = (v_{-1}, v_0) \in W_0$, denote by $T_{e}$ the connected component of $T$ containing $e$. We also denote by $G_{v_0}$ the subgraph of $G$ consisting of all vertices within distance $k$ of $v_0$.

  We claim that $\la_1(T_{e}) \le \la_1(G_{v_0})$ for every $e = (v_{-1}, v_0)\in W_0$. Let $s_i$ and $t_i$ be the number of closed walks of length $i$ starting respectively at $e$ in $T_{e}$ and $v_0$ in $G_{v_0}$. It is well known that $\la_1(T_{e}) = \limsup \sqrt[i]{s_i}$ and $\la_1(G_{v_0}) = \limsup \sqrt[i]{t_i}$. We naturally map a closed walk $e = e_0, e_1, \dots, e_i = e$ in $T_e$ to a closed walk $v_0, v_1, \dots, v_i = v_0$ in $G_{v_0}$, where $v_j$ is the terminal vertex of the non-backtracking walk $e_j$ for $j = 0, 1, \dots, i$. One can show that this map is injective, and so $s_i \le t_i$ for all $i$, from which the claim follows.

  Because $\la_1(T) = \max\dset{\la_1(T_{e})}{e\in W_0}$, it suffices to prove $\la_1(T) \ge \la\sqrt{d-1}$, where $\la := \la_1(P_{k+1}) = 2\cos(\tfrac{\pi}{k+2})$. Consider the non-backtracking random walk on $T$, where the start vertex $w_0 = (v_{-1}, v_0)$ is chosen uniformly at random from $W_0$ and, for $i\in[k]$, at $i$th step the next vertex $w_i = (v_{-1}, v_0, \dots, v_i)$ is chosen uniformly at random among the available choices in $W_i$. The transition matrix of this walk is
  \[
    P_{(v_{-1}, v_0, \dots, v_i), (v_{-1}, v_0, \dots, v_{i+1})} = \frac{1}{d(v_i)-1},
  \]
  where $d(v)$ denotes the degree of $v$ in $G$. Clearly $\abs{W_0} = d\abs{V(G)}$. Since $W_i$ is a finite set and $w_i = (v_{-1}, v_0, \dots, v_i)$ is a random element of $W_i$ with distribution \[
    p(w_i) := \frac{1}{d\abs{V(G)}}\prod_{j=0}^{i-1}\frac{1}{d(v_j)-1},
  \] for any $c\from W_i \to \R$, the basic identity of importance sampling allows us to represent $\sum_{w\in W_i}c(w)$ as follows:
  \begin{equation} \label{sample}
    \sum_{w\in W_i}c(w) = \sum_{w\in W_i}p(w)\frac{c(w)}{p(w)} = \E{{c(w_i)}/{p(w_i)}}.
  \end{equation}

  Let $(x_0, x_1, \dots, x_k) \in \R^{k+1}$ be an eigenvector of $P_{k+1}$ such that $x_0, x_1, \dots x_k > 0$. Define the vector $f\from V(T) \to \R$ by $f(w) = x_i\sqrt{p(w)}$ for $w\in W_i$, and define the matrix $A$ to be the adjacency matrix of the forest $T$. For $w = (u_{-1}, u_0, \dots, u_i)$, denote by $w^- = (u_{-1}, u_0, \dots, u_{i-1})$. By the importance sampling identity~\eqref{sample}, we observe that
  \begin{align*}
    \ip{f}{f} & = \sum_{i=0}^{k}\sum_{w\in W_i}f(w)^2 = \sum_{i=0}^k\E{x_i^2} = \sum_{i=0}^kx_i^2,\\
    \tfrac{1}{2}\ip{f}{Af} & = \sum_{i=1}^{k}\sum_{w\in W_i}f(w^-)f(w) = \sum_{i=1}^k\E{x_{i-1}x_i\sqrt{p(w^-)/p(w)}} = \sum_{i=1}^kx_{i-1}x_i\E{\sqrt{d(v_{i-1})-1}}.
  \end{align*}

  It can be verified by induction that $(v_{i-1}, v_i)$ is uniformly distributed on $W_0$ for all $i=0,1,\dots, k$. Thus $\pr{v_{i-1} = v} = \tfrac{d(v)}{d\abs{V(G)}} =: \pi(v)$ for all $v\in V(G)$ and $i = 1, 2, \dots, k$. Since each $v_{i-1}$ has distribution $\pi$ and the function $x\mapsto x\sqrt{x-1}$ is convex on $[2, \infty]$, Jensen's inequality gives \[
    \tfrac{1}{2}\ip{f}{Af} = \sum_{i=1}^k x_{i-1}x_i\sum_{v\in V(G)}\frac{d(v)}{d\abs{V(G)}}\sqrt{d(v)-1} \ge \sqrt{d-1}\sum_{i=1}^kx_{i-1}x_i.
  \]
  Finally we invoke the Rayleigh principle $2\sum_{i=1}^k{x_{i-1}x_i} = \la\sum_{i=0}^k x_i^2$ and $\la_1(T) \ge {\ip{f}{Af}}/{\ip{f}{f}}$.
\end{proof}

\begin{remark}
  Lemma~\ref{local} is asymptotically tight when $d$ is a prime plus one due to the existence of regular graphs of high girth. The Ramanujan graphs constructed independently by Margulis~\cite{MR671147} and Lubotzky, Phillips and Sarnak~\cite{MR963118} are $d$-regular graphs on $n$ vertices of girth $\Omega_d(\log n)$. In these Ramanujan graphs, every subgraph induced on the vertices within a bounded distance of a given vertex looks like a $d$-regular tree, whose spectral radius is bounded from above by the spectral radius $2\sqrt{d-1}$ of the infinite $d$-regular tree.
\end{remark}

\begin{remark}
  See the expository note by Levin and Peres~\cite{MR3681593} for other applications of Markov chains and importance sampling.
\end{remark}

\begin{theorem} \label{upper_bound}
  Given $\al\in(0,1)$, let $\la := \frac{1-\al}{2\al}$. If $\la \ge 2$, then the maximum number $N_\al(n)$ of equiangular lines in $\R^n$ is at most $\left(1+\frac{1}{4}+\frac{1}{\la^2}+o_\al(1)\right)n$. In particular, $N_{1/7}(n) \le \left(\frac{4}{3}+\frac{1}{36}+o(1)\right)n$.
\end{theorem}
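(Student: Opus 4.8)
The plan is to pass from a spherical $\sset{\pm\al}$-code to the underlying graph $G$ of an associated spherical $L(\al,t)$-code, and then bound $v(G)$ by controlling the \emph{average} degree of $G$ through Lemma~\ref{local} and the forbidden-subgraph mechanism of Lemma~\ref{balla_cor}, combined with the rank estimate of Lemma~\ref{balla1}. Fix a spherical $\sset{\pm\al}$-code $C_\al$ in $\R^n$ of maximum size $N_\al(n)$ and a large integer $k$, to be sent to infinity at the end. I first want a bound on the maximum degree of the underlying graph of any spherical $L(\al,t)$-code that is uniform in $t$: if $v$ is a vertex with neighbors $u_1,\dots,u_d$, then $\ip{v}{u_j}$ equals the negative element $\beta$ of $L(\al,t)$ for all $j$, while $\ip{u_i}{u_j}\le\gamma:=\tfrac1{t+\al^{-1}}$, so expanding $0\le\normsq{v+s\sum_j u_j}$ and optimizing over $s$ gives $d^2\beta^2\le d+d(d-1)\gamma$, hence $d\le\tfrac{1-\gamma}{\beta^2-\gamma}$; since $\beta\to -1/\la$ and $\gamma\to 0$ as $t\to\infty$, there are $t_0=t_0(\al)$ and $\Delta_0=\Delta_0(\al)$ with $d\le\Delta_0$ whenever $t\ge t_0$. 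Therefore, for $t\ge t_0$, every ball of radius $k$ in such an underlying graph has at most $N_0:=(k+1)\Delta_0^k$ vertices, a bound depending only on $\al$ and $k$. Put $\G:=\dset{H}{v(H)\le N_0,\ \la_1(H)>\la}$, a finite family of graphs all of spectral radius $>\la$. By Lemma~\ref{balla} and Lemma~\ref{balla_cor}, with $t$ chosen large enough (depending on $\al$ and $k$, and at least $t_0$), there is a spherical $L(\al,t)$-code $C$ in $\R^n$ with $\abs{C_\al}\le\abs{C}+O(1)$ whose underlying graph $G$ contains no member of $\G$ as a subgraph; write $A$ for the adjacency matrix of $G$ and recall $\rank{I-A/\la}\le n+1$ from Proposition~\ref{rank}.

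Next I bound the average degree $\bar d(G):=2\abs{E(G)}/v(G)$ by bounding $\bar d(G_i)$ for each connected component $G_i$ of $G$, so that $\bar d(G)$, being the vertex-weighted average of the $\bar d(G_i)$, inherits the bound. If $\bar d(G_i)<2$ then, since $\la\ge 2$, we have $\bar d(G_i)<2\le 1+\la^2/4$. If $\bar d(G_i)\ge 2$, then $G_i$ has a cycle, and iteratively deleting leaves produces a nonempty graph $G_i'$ of minimum degree $\ge 2$ with (by the remark in the proof of Lemma~\ref{local}) $\bar d(G_i')\ge\bar d(G_i)$. Applying Lemma~\ref{local} to $G_i'$ yields a vertex whose ball $H$ of radius $k$ in $G_i'$ satisfies $\la_1(H)\ge 2\cos(\tfrac\pi{k+2})\sqrt{\bar d(G_i')-1}\ge 2\cos(\tfrac\pi{k+2})\sqrt{\bar d(G_i)-1}$. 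But $H$ is an induced subgraph of $G$ with at most $N_0$ vertices and $G$ avoids $\G$, so $\la_1(H)\le\la$, hence $\bar d(G_i)\le 1+\tfrac{\la^2}{4\cos^2(\pi/(k+2))}$. Consequently $\bar d(G)\le 1+\tfrac{\la^2}{4\cos^2(\pi/(k+2))}$.

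Combining this with Lemma~\ref{balla1} applied to the real symmetric matrix $I-A/\la$, and using $\tr{A}=0$ and $\tr{A^2}=2\abs{E(G)}=\bar d(G)\,v(G)$,
\[
  n+1\ \ge\ \rank{I-\tfrac{A}{\la}}\ \ge\ \frac{\bigl(\tr{I-A/\la}\bigr)^2}{\tr{(I-A/\la)^2}}\ =\ \frac{v(G)^2}{v(G)\bigl(1+\bar d(G)/\la^2\bigr)}\ =\ \frac{v(G)}{1+\bar d(G)/\la^2},
\]
so that $v(G)\le(n+1)\bigl(1+\tfrac1{\la^2}+\tfrac1{4\cos^2(\pi/(k+2))}\bigr)$. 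Given any $\eps>0$, choosing $k$ large makes $\tfrac1{4\cos^2(\pi/(k+2))}\le\tfrac14+\eps$, and then $N_\al(n)=\abs{C_\al}\le\abs{C}+O(1)=v(G)+O(1)\le(1+\tfrac14+\tfrac1{\la^2}+\eps)n+O(1)$; as $\eps>0$ is arbitrary this yields $N_\al(n)\le(1+\tfrac14+\tfrac1{\la^2}+o_\al(1))n$. In particular $\al=1/7$ gives $\la=3$ and $N_{1/7}(n)\le(\tfrac43+\tfrac1{36}+o(1))n$.

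The step I expect to be the main obstacle is arranging the parameters without circularity: the finite family $\G$ must be fixed before Lemma~\ref{balla_cor} supplies a suitable $t$, yet the sizes of radius-$k$ balls — which govern how large $\G$ must be — a priori depend on the maximum degree of $G$ and hence on $t$. The uniform maximum-degree bound established in the first paragraph is precisely what breaks this circle, since it lets $N_0$ (and therefore $\G$) be fixed as a function of $\al$ and $k$ alone. A smaller technicality is that Lemma~\ref{local} requires minimum degree at least $2$, which is why components of average degree $\ge 2$ are first reduced by leaf-deletion — legitimate because such deletions do not lower the average degree once it is at least $2$.
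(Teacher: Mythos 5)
Your proof is correct and reaches the stated bound; the skeleton (reduce to a spherical $L(\al,t)$-code via Lemma~\ref{balla} and Lemma~\ref{balla_cor}, use Lemma~\ref{local} to force a small-radius subgraph of large spectral radius whenever the average degree is too big, then convert an average-degree bound into a rank bound via Lemma~\ref{balla1} and Proposition~\ref{rank}) is the same as the paper's. You deviate in two worthwhile ways. First, where the paper controls the maximum degree combinatorially by putting the star $S_D$ (with $\sqrt{D}>\la$) into the forbidden family and then only needs to forbid connected graphs of radius $\le k$ and bounded degree, you derive a uniform degree bound $\Delta_0(\al)$ directly from positive semidefiniteness of the Gram matrix restricted to a vertex and its neighbourhood, and then forbid \emph{all} graphs on at most $N_0=(k+1)\Delta_0^k$ vertices with spectral radius $>\la$; both routes yield a finite family fixed before $t$ is chosen, so neither is circular, and your extra lemma is correct (the discriminant computation $d(\beta^2-\gamma)\le 1-\gamma$ is right). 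Second, the paper splits the components into those with $\la_1(G_i)\le\la$ (handled by Corollary~\ref{pf_cor} together with $\frac{k(\la)}{k(\la)-1}\le 1+\frac1\la\le 1+\frac14+\frac1{\la^2}$) and those of small average degree (handled by the trace bound), whereas you observe that the ball argument bounds the average degree of \emph{every} component, so you can skip Corollary~\ref{pf_cor} entirely and apply Lemma~\ref{balla1} once to the whole matrix $I-A/\la$, the global average degree being a vertex-weighted mean of the component averages. This is a genuine simplification with no loss. One cosmetic point: Lemma~\ref{local} as stated already only assumes average degree $\ge 2$ (the leaf-deletion reduction is internal to its proof), so your explicit passage to $G_i'$ is harmless but unnecessary.
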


\begin{proof}
  For a fixed $\eps > 0$, we shall prove that the size a spherical $\sset{\pm\al}$-code $C_\al$ in $\R^n$ is at most $\left(1+\frac{1}{4}+\frac{1}{\la^2} + \eps\right)n + O(1)$. Choose $k\in\N$ so that
  \begin{equation} \label{def_lap}
    \la' := 2\cos\left(\frac{\pi}{k+2}\right) > \frac{2}{\sqrt{1+4\eps}}.
  \end{equation}

  We first find a finite family $\G_0$ of graphs of spectral radius $>\la$ that approximates a forbidden subgraphs characterization of $\F(\la)$ in the following sense: if a connected graph $G$ does not contain any graph in $\G_0$, then either $G\in\F(\la)$ or the average degree of $G$ is at most
  \begin{equation} \label{def_d}
    d := \left(\frac{\la}{\la'}\right)^2 + 1.
  \end{equation}

  Choose $D\in\N$ such that the star $S_D\notin\F(\la)$ in view of Lemma~\ref{abcdef}(s). Suppose a connected graph $G$ does not contain $S_D$ and it has average degree $d_G > d$. Because $\la \ge 2$ hence $d_G > d \ge (2/\la')^2+1 > 2$, Lemma~\ref{local} implies that there exists a vertex $v_0$ of $G$ such that $\la_1(H) \ge \la'\sqrt{d_G-1} > \la'\sqrt{d-1} = \la$, where $H$ is the subgraph consisting of all vertices within distance $k$ of $v_0$. This means that $G$ contains a subgraph $H\notin \F(\la)$ with radius $\le k$. Therefore we can approximate a forbidden subgraphs characterization by
  \[
    \G_0 := \sset{S_D}\cup\sset{\text{connected graph }H\notin\F(\la)\text{ with maximum degree}< D\text{ and radius}\le k}.
  \]

  By Lemma~\ref{balla} and Lemma~\ref{balla_cor}, we can choose $t \in \N$ depending only on $\al$ such that there exists a spherical $L(\al, t)$-code $C$ in $\R^n$ with $\abs{C_\al}\le\abs{C}+O(1)$ whose underlying graph $G$ does not contain any graph in $\G_0$. Let $G_1, \dots, G_m$ be the connected components of $G$. As $G$ does not contain any graph of $\G_0$, neither does $G_i$ for all $i\in[m]$. By our choice of $\G_0$, we know that either $\la_1(G_i) \le \la$ or the average degree of $G_i$ is $\le d$.

  In the former case that $\la_1(G_i) \le \lambda$, Corollary~\ref{pf_cor} gives \[
    v(G_i) \le \frac{k(\la)}{k(\la)-1} \cdot \rank{I - \frac{A_i}{\la}},
  \]
  where $v(G_i)$ is the order of $G_i$, $k(\la)$ is the spectral radius order of $\la$, and $A_i$ is the adjacency matrix of $G_i$. Because the complete graph on $k(\la)$ vertices has the largest spectral radius $k(\la)-1$ among all graphs on $k(\la)$ vertices, we know that $\la$, the spectral radius of some graph on $k(\la)$ vertices, is at most $k(\la)-1$. Using the fact that $1/\la \le 1/4 + 1/\la^2$, we obtain \[
    v(G_i) \le \frac{k(\la)}{k(\la)-1}\cdot \rank{I - \frac{A_i}{\la}} \le \left(1+\frac{1}{\la}\right)\rank{I - \frac{A_i}{\la}} \le \left(1+\frac{1}{4}+\frac{1}{\la^2}\right)\rank{I - \frac{A_i}{\la}}.
  \] In case $k(\la) = \infty$, by Corollary~\ref{pf_cor}, $v(G_i) \le \rank{I - A_i/\la}$, so the same estimation holds trivially.

  In the latter case that the average degree $d(G_i)$ of $G_i$ is $\le d$, we can apply Lemma~\ref{balla1} to the matrix $I - A_i/\la$ and get
  \[
    \rank{I - \frac{A_i}{\la}} \ge \frac{(\tr{I-A_i/\la})^2}{\tr{(I-A_i/\la)^2}} = \frac{v(G_i)^2}{v(G_i) + v(G_i)d(G_i)/\la^2} \ge \frac{v(G_i)}{1 + d/\la^2}.
  \]
  Thus we obtain the estimation
  \begin{multline*}
    v(G_i)\le\left(1+\frac{d}{\la^2}\right)\rank{I - \frac{A_i}{\la}} \stackrel{\eqref{def_d}}{=} \left(1 + \frac{1}{\la'^2} + \frac{1}{\la^2}\right)\rank{I - \frac{A_i}{\la}} \\ \stackrel{\eqref{def_lap}}{\le} \left(1 + \frac{1}{4} + \frac{1}{\la^2} + \eps\right)\rank{I - \frac{A_i}{\la}}.
  \end{multline*}

  Summing up these estimations for all the $G_i$'s and using Proposition~\ref{rank}, we get
  \begin{multline*}
    \abs{C} = \sum_{i = 1}^m v(G_i) \le \left(1 + \frac{1}{4} + \frac{1}{\la^2} + \eps\right)\sum_{i=1}^m\rank{I-\frac{A_i}{\la}} \\
    \le \left(1 + \frac{1}{4} + \frac{1}{\la^2} + \eps\right)\rank{I-\frac{A}{\la}} \le \left(1 + \frac{1}{4} + \frac{1}{\la^2} + \eps\right)(n + 1).
  \end{multline*}
  Recalling that $\abs{C_\al} \le \abs{C} + O(1)$, we get $\abs{C_\al} \le \left(1+\frac{1}{4}+\frac{1}{\la^2}+\eps\right)n + O(1)$.
\end{proof}

\begin{corollary}
  Given $\al \in (0,1) \setminus \sset{1/3, 1/5, 1/(1+2\sqrt{2})}$, the maximum number $N_{\al}(n)$ of equiangular lines in $\R^n$ with angle $\arccos\al$ is at most $1.49n + O(1)$.
\end{corollary}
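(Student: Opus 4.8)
The plan is a short case analysis on $\la := \frac{1-\al}{2\al}$, splitting according to whether $\la \le \la^*$ or $\la > \la^*$, and invoking Theorem~\ref{equi_main} in the first case and Theorem~\ref{upper_bound} in the second.

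First I would record two elementary facts. The three excluded angles $\al = \tfrac13,\tfrac15,\tfrac1{1+2\sqrt2}$ correspond precisely to $\la = 1, 2, \sqrt2$. Moreover, inspecting the (at most four) graphs on at most three vertices shows that $k(\la) = 1$ forces $\la = 0$, that $k(\la) = 2$ forces $\la = 1$ (the edge $K_2$), and that $k(\la) = 3$ forces $\la \in \{\sqrt2, 2\}$ (the path $P_3$ and the triangle $K_3$). Hence for every $\al \in (0,1)\setminus\{\tfrac13,\tfrac15,\tfrac1{1+2\sqrt2}\}$ we have $\la > 0$ and $\la \notin \{1,\sqrt2,2\}$, so either $k(\la) = \infty$ or $k(\la) \ge 4$.

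In the case $\la \le \la^*$, Theorem~\ref{equi_main} gives $N_\al(n) = \frac{k(\la)}{k(\la)-1}\,n + O(1)$ when $k(\la) < \infty$ and $N_\al(n) = n + O(1)$ when $k(\la) = \infty$; since $k(\la) \ge 4$ in the former subcase, the linear coefficient is at most $\tfrac43 < 1.49$ and we are done. In the case $\la > \la^*$ we have in particular $\la > 2$, so Theorem~\ref{upper_bound} applies and, using $1/(\la^*)^2 = 1/(2+\sqrt5) = \sqrt5 - 2$,
\[
  N_\al(n) \le \Bigl(1 + \tfrac14 + \tfrac1{\la^2} + o(1)\Bigr) n < \Bigl(1 + \tfrac14 + \tfrac1{(\la^*)^2} + o(1)\Bigr) n = \bigl(\sqrt5 - \tfrac34 + o(1)\bigr) n .
\]
Since $\sqrt5 - \tfrac34 = 1.4860\ldots < 1.49$, the $o(1)$ error is absorbed for all sufficiently large $n$, and the finitely many remaining small values of $n$ are handled by enlarging the additive constant; this yields $N_\al(n) \le 1.49\,n + O(1)$.

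There is no genuine obstacle here: all the difficulty is already contained in Theorems~\ref{equi_main} and~\ref{upper_bound}, and the corollary merely checks that the coefficients they produce fall below $1.49$ once the three exceptional angles (which give coefficient $\tfrac32$ or $2$) are discarded. The only point requiring care is the numerical margin near the threshold $\la = \la^*$: one must confirm that $1 + \tfrac14 + 1/(\la^*)^2 = \sqrt5 - \tfrac34 \approx 1.486$ lies strictly below $1.49$, so that the cushion of roughly $0.004$ suffices to swallow the $o(1)$ term from Theorem~\ref{upper_bound}.
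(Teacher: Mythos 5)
Your proposal is correct and follows essentially the same route as the paper: split at $\la = \la^*$, apply Theorem~\ref{equi_main} below the threshold after noting that excluding $\al\in\{\tfrac13,\tfrac15,\tfrac1{1+2\sqrt2}\}$ forces $k(\la)\ge 4$ or $k(\la)=\infty$ (hence coefficient $\le\tfrac43$), and apply Theorem~\ref{upper_bound} above it with the numerical check $1+\tfrac14+1/(\la^*)^2=\sqrt5-\tfrac34<1.49$. The paper phrases the last step by fixing $\eps:=0.24-1/(\la^*)^2>0$, which is exactly your $0.004$ cushion.
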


\begin{proof}
  Recall that $\la^* = \sqrt{2 + \sqrt{5}} \approx 2.058$. On the one hand, Theorem~\ref{equi_main} implies that $N_\al(n) \le \frac{4}{3}n+O(1)$ for $\la := \frac{1-\al}{2\al} \in (0, \la^*]\setminus\sset{1, \sqrt{2}, 2}$. On the other hand, because $\eps := 0.24 - 1/(\la^*)^2 > 0$, Theorem~\ref{upper_bound} implies that for $\la > \la^*$,
  $N_{\al}(n) \le \left(1+\frac{1}{4}+\frac{1}{\la^2}+\eps\right)n+O(1) \le 1.49n+O(1)$.
\end{proof}

\section{Concluding remarks} \label{open}

Besides Theorem~\ref{equi_main}, Lemma~\ref{lower_bound} and Proposition~\ref{equi_not_real}, we discuss two other evidences supporting Conjecture~\ref{main_conj}. Notice that the spectral radius order of $\la$ is at least the algebraic degree $\deg(\la)$ of $\la$. Conjecture~\ref{main_conj} predicts that $N_\al(n) \le \frac{\deg(\la)}{\deg(\la)-1}\cdot n + O(1)$. This is indeed a cheap upper bound on $N_\al(n)$.

\begin{proposition} \label{alg}
  Given $\al \in (0,1)$, if $\la := \frac{1-\al}{2\al}$ is a totally real algebraic integer, then $N_\al(n) \le \frac{\deg(\la)}{\deg(\la)-1}\cdot (n + 1)$, where $\deg(\la)$ is the algebraic degree of $\la$.
\end{proposition}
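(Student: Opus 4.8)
The plan is to mimic the proof of Corollary~\ref{forb_main_cor}, replacing the Perron--Frobenius estimate of Corollary~\ref{pf_cor} --- which was tailored to connected graphs and the spectral radius order $k(\la)$ --- by a Galois-conjugacy estimate valid for an arbitrary graph and the algebraic degree $\deg(\la)$. Concretely, I would fix a spherical $\sset{\pm\al}$-code $C_\al$ in $\R^n$ of maximum size, let $G$ be its underlying graph, $A$ the adjacency matrix of $G$, and $M$ the Gram matrix. Exactly as in Proposition~\ref{rank}, from $M = (1-\al)(I - A/\la) + \al J$ one gets $\rank{I - A/\la} = \rank{M - \al J} \le \rank{M} + \rank{J} \le n + 1$, so it remains to show $v(G) \le \tfrac{\deg(\la)}{\deg(\la)-1}\rank{I - A/\la}$.

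For that, let $k$ be the multiplicity of $\la$ as an eigenvalue of $A$, so that $\rank{I - A/\la} = v(G) - k$; it suffices to prove $k \le v(G)/\deg(\la)$. If $k = 0$ this is trivial, so assume $\la$ is an eigenvalue of $A$. The characteristic polynomial $\chi_A$ is monic with integer coefficients, hence a product of powers of pairwise distinct monic irreducible polynomials over $\Q$. Since $\la$ is an algebraic integer, its minimal polynomial $p$, of degree $\deg(\la)$, is one of those irreducible factors and occurs in $\chi_A$ with multiplicity exactly $k$. Because $\la$ is totally real, all $\deg(\la)$ roots of $p$ are real, hence eigenvalues of $A$; and distinct irreducible factors being coprime, each of these roots occurs in $\chi_A$ with the same multiplicity $k$. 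Therefore $v(G) = \deg\chi_A \ge k\deg(\la)$, which is the desired bound. Substituting back, $\bigl(1 - 1/\deg(\la)\bigr)v(G) \le \rank{I - A/\la} \le n+1$, i.e. $N_\al(n) = v(G) \le \tfrac{\deg(\la)}{\deg(\la)-1}(n+1)$.

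I do not anticipate any real obstacle: the only nontrivial ingredient is the standard fact that the Galois conjugates of an algebraic integer occur as eigenvalues of an integer matrix with equal multiplicities, and this falls out of the rational factorization of $\chi_A$ as above. Note also that, unlike in Corollary~\ref{forb_main_cor}, no decomposition of $G$ into connected components is needed, since the argument applies to the whole adjacency matrix $A$ at once. (The statement is understood for $\deg(\la)\ge 2$; for $\deg(\la)=1$, i.e. $\la\in\N$, the right-hand side is not finite and there is nothing to prove.)
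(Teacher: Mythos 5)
Your proposal is correct and follows essentially the same route as the paper: both rest on the observation that the multiplicity of $\la$ as an eigenvalue of the integer symmetric matrix $A$ is at most $v(G)/\deg(\la)$ (which the paper asserts in one line and you justify via the factorization of $\chi_A$ into irreducibles), combined with the rank inequality relating $I-A/\la$, the Gram matrix $M$, and $J$. The only cosmetic difference is the direction in which the rank inequality is written; the bounds obtained are identical.
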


\begin{proof}
  Let $C$ be a spherical $\sset{\pm\al}$-code in $\R^n$. Let $M$ be its Gram matrix, and $A$ the adjacency matrix of the underlying graph $G$. We know that $M = (1-\al)(I - A/\la) + \al J$. If $\la$ is a totally real algebraic number, then the multiplicity of $\la$ as an eigenvalue of $A$ is $\le\frac{1}{\deg(\la)}\abs{C}$. Thus $\rank{I-A/\la} \ge \left(1-\frac{1}{\deg(\la)}\right)\abs{C}$ and so $n \ge \rank{M} \ge \rank{I - A/\la} - \rank{J} \ge \left(1-\frac{1}{\deg(\la)}\right)\abs{C} - 1$.
\end{proof}

Lemma~\ref{lower_bound} and Proposition~\ref{alg} would imply Conjecture~\ref{main_conj} in the equality case $k(\la) = \deg(\la)$. Note that $k(\la) = \deg(\la)$ if and only if $\la$ is the spectral radius of a graph with irreducible characteristic polynomial. A result of Mowshowitz~\cite{MR0337682} (see \cite[Theorem~3.8]{MR624545} for a generalization) states that a graph with irreducible characteristic polynomial has trivial automorphism group. Such graphs are known as asymmetric graphs. Erd\H{o}s and R\'enyi~\cite{MR0156334} showed that asymmetric graphs have at least $6$ vertices and there are $8$ asymmetric graphs on $6$ vertices. Interestingly, all these 8 graphs indeed have irreducible characteristic polynomial, and their spectral radii are $> \la^*$.

Clearly, if $\la$ is the spectral radius of a graph, then $\la$ is a totally real algebraic integer, and $\la$ is the largest among its conjugate elements. It would be interesting to study a complete set of necessary conditions for the spectral radius of a graph.

When $\la$ is a totally real algebraic integer but not the largest among its conjugate elements, the spectral radius order $k(\la) = \infty$, and Conjecture~\ref{main_conj} asserts that $N_\al(n) = n + O(1)$. This is indeed the case.

\begin{proposition}
  Given $\al \in (0,1)$, if $\lambda := \frac{1-\al}{2\al}$ is a totally real algebraic integer, but $\la$ is not the largest among its conjugate elements, then $n \le N_\al(n) \le n+2$.
\end{proposition}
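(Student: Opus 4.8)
The plan is to follow the proof of Proposition~\ref{equi_not_real} closely, adding one observation that handles the possibility that $\la$ is genuinely an eigenvalue of the underlying graph. For the lower bound, I would first note that $k(\la) = \infty$: if $\la$ were the spectral radius of a graph $H$, then the minimal polynomial of $\la$ over $\Q$, being an irreducible factor of the integer polynomial $\chi_H$, would divide $\chi_H$, so every conjugate of $\la$ — in particular the largest one, which is strictly bigger than $\la$ since the conjugates of an algebraic number are pairwise distinct — would be an eigenvalue of $H$, contradicting $\la = \la_1(H)$. Hence $k(\la) = \infty$, and the last clause of Lemma~\ref{lower_bound} gives $N_\al(n) \ge n$.

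For the upper bound, let $C$ be a spherical $\sset{\pm\al}$-code in $\R^n$, let $M$ be its Gram matrix and $A$ the adjacency matrix of its underlying graph, so that $M = (1-\al)(I - A/\la) + \al J$ exactly as in Proposition~\ref{equi_not_real}. Write $\la_{\max}$ for the largest conjugate of $\la$; by hypothesis $\la_{\max} > \la > 0$. The key claim is that the multiplicity $m$ of $\la$ as an eigenvalue of $A$ is at most $1$. Since $A$ has integer entries, the common minimal polynomial of $\la$ and $\la_{\max}$ divides $\chi_A$; factoring $\chi_A = \prod_i p_i^{e_i}$ into distinct monic irreducibles over $\Q$, both $\la$ and $\la_{\max}$ are simple roots of the same factor $p_1$ and roots of no other $p_i$, so each occurs in $\chi_A$ with multiplicity exactly $e_1 = m$. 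If $m \ge 2$, then the $\la_{\max}$-eigenspace of $A$ (whose dimension equals $m$, as $A$ is symmetric) has dimension at least $2$ and therefore contains a nonzero vector $u$ orthogonal to $\mathbf{1}$; then $Mu = (1-\al)(1 - \la_{\max}/\la)\,u$ with $1 - \la_{\max}/\la < 0$, so $u^\top M u < 0$, contradicting that the Gram matrix $M$ is positive semidefinite. Hence $m \le 1$.

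It follows that $\rank{I - A/\la} = \abs{C} - m \ge \abs{C} - 1$, while the identity $I - A/\la = \tfrac{1}{1-\al}(M - \al J)$ gives $\rank{I - A/\la} \le \rank{M} + \rank{J} \le n + 1$; combining the two bounds, $\abs{C} \le n + 2$, so $N_\al(n) \le n+2$, and together with the lower bound we obtain $n \le N_\al(n) \le n + 2$. The only step that is not an immediate repetition of earlier arguments is the equality of the multiplicities of $\la$ and $\la_{\max}$ in $\chi_A$, and even that reduces to the elementary rational-factorization observation above, so I do not expect a substantive obstacle.
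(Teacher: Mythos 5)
Your proof is correct and follows essentially the same route as the paper: both rest on the fact that $\la$ and its larger conjugate $\la'$ have equal multiplicity in $\chi_A$, so multiplicity $\ge 2$ for $\la$ would force a $2$-dimensional negative eigenspace of $I-A/\la$ that survives the rank-one perturbation $\al J$ and contradicts positive semidefiniteness of $M$. The only (cosmetic) differences are that you exhibit an explicit test vector orthogonal to $\mathbf{1}$ where the paper invokes Weyl's inequality, and that you spell out the equal-multiplicity and $k(\la)=\infty$ facts that the paper uses implicitly.
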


\begin{proof}
  We denote by $\la_{-i}(\cdot)$ and $\la_i(\cdot)$ respectively the $i$th smallest eigenvalue and the $i$th largest eigenvalue of a matrix.
  Let $\la' > \la$ be a conjugate element of $\la$. Let $C$ be a spherical $\sset{\pm\al}$-code in $\R^n$. Let $M$ be its Gram matrix, and $A$ the adjacency matrix of the underlying graph $G$. We know that $M = (1-\al)(I - A/\la)+\al J$.

  Assume for the sake of contradiction that $\rank{I - A/\la} \le \abs{C}-2$, that is, $\la$ is an eigenvalue of $A$ with multiplicity $\ge 2$, then $1-\la'/\la < 0$ is an eigenvalue of $I - A/\la$ with multiplicity $\ge 2$, hence $\la_{-2}(I - A/\la) < 0$. By Weyl's inequality, $\la_{-1}(M) \le (1-\al)\la_{-2}(I-A/\la) + \al\la_2(J) < 0$.
  This contradicts with the fact that $M$ is positive semidefinite.

  Therefore $\rank{I - A/\la} \ge \abs{C}-1$ and so $n \ge \rank{M} \ge \rank{I - A/\la} - \rank{J} \ge \abs{C}-2$. Together with Lemma~\ref{lower_bound}, we have $n \le N_\al(n) \le n+2$.
\end{proof}

Lastly, we remark on a possible extension of our method. Our proof strategy would resolve Conjecture~\ref{main_conj} provided the following connection between forbidden subgraphs characterization and multiplicity of the second largest eigenvalue.

\begin{conjecture}
  For every $\lambda > 0$, there exist graphs $G_1, G_2, \dots, G_n$ of spectral radius $> \la$ such that for every connected graph $G$ that does not contain any of $G_1, G_2, \dots, G_n$ as a subgraph, if $\la$ is the second largest eigenvalue of $G$ then the multiplicity of $\la$ is $\le v(G)/k(\la)$, where $v(G)$ is the order of $G$ and $k(\la)$ is the spectral radius order of $\la$.
\end{conjecture}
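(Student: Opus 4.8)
The plan is to follow the architecture of Section~\ref{3.2}. First, note that for $\la < \la^*$ with $\la \notin \sset{\al_2, \al_3, \dots}$ the statement is immediate: Theorem~\ref{forb_main} provides graphs $G_1, \dots, G_n$ of spectral radius $> \la$ whose avoidance forces a connected graph into $\F(\la)$, and for such a graph the Perron--Frobenius theorem prevents $\la$ from being the second largest eigenvalue, so the conclusion holds vacuously. Likewise, if $\la$ is not a totally real algebraic integer --- in particular if $\la \in \sset{\al_2, \al_3, \dots}$ by Proposition~\ref{not_real} --- then $\la$ is never an eigenvalue of any graph and the hypothesis is again vacuous. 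The essential range is therefore $\la \ge \la^*$ with $\la$ a totally real algebraic integer, and I would focus on the main subcase in which $\la$ is actually the spectral radius of a graph, so that $k(\la) < \infty$.

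In that range I would argue as follows. Enlarge the forbidden family to include all of the finitely many connected graphs on fewer than $k(\la)$ vertices whose spectral radius exceeds $\la$; this is legitimate since each such graph has spectral radius $> \la$. Let $G$ be connected, avoid every forbidden graph, and suppose $\la = \la_2(G)$ with multiplicity $\mu$. The key elementary observation is a nodal-domain estimate: if $f$ is a $\la$-eigenvector and $U$ is a connected component of $\sset{v : f(v) > 0}$ or of $\sset{v : f(v) < 0}$, then for $v \in U$ one has $\sum_{u \sim v,\, u \in U} f(u) = \sum_{u \sim v,\, f(u) > 0} f(u) \ge \sum_{u \sim v} f(u) = \la f(v)$, so the adjacency matrix of $G[U]$ satisfies $A(G[U])\, f|_U \ge \la\, f|_U$ entrywise with $f|_U$ strictly positive, which forces $\la_1(G[U]) \ge \la$ via the Rayleigh quotient. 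Since $G$ avoids every small-order graph of spectral radius $> \la$, a subgraph $G[U]$ with $|U| < k(\la)$ would satisfy $\la_1(G[U]) < \la$ --- its spectral radius cannot equal $\la$ by the definition of $k(\la)$ --- so $|U| \ge k(\la)$. Assigning each vertex of $\sset{f = 0}$ to an arbitrary nodal domain turns the strong nodal domains of $f$ into a partition of $V(G)$ into parts each of order $\ge k(\la)$ and each still of spectral radius $\ge \la$ by monotonicity. Hence, if some $\la$-eigenvector has at least $\mu$ strong nodal domains, then $v(G) \ge \mu\, k(\la)$, i.e.\ the multiplicity of $\la$ is at most $v(G)/k(\la)$, as desired.

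It remains to exhibit, after possibly forbidding further subgraphs of spectral radius $> \la$, a $\la$-eigenvector with at least $\mu$ strong nodal domains; this is the crux and the main obstacle. A single eigenvector usually has far fewer nodal domains than $\mu$ --- for instance when $G$ is a small ``core'' with several near-extremal pieces attached, an eigenvector may be supported on just two pieces --- so the argument must exploit the whole $\mu$-dimensional eigenspace $W$. The natural plan is structural: choose a finite forbidden family (in the spirit of the Brouwer--Neumaier description of $\F(\la^*)$) forcing $G$ to consist of a bounded core together with a number of essentially disjoint attached near-extremal pieces realizing $\la$, argue that $W$ then splits according to the pieces, and conclude that a generic $f \in W$ is nonzero on every piece and therefore contributes one strong nodal domain per piece. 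The difficulty is to make this rigorous: lower-bounding the number of nodal domains of a generic element of a high-multiplicity eigenspace is delicate (the Courant--Herrmann conjecture fails in general), and one must simultaneously establish the required structure theorem for graphs avoiding a finite set of spectral-radius-$>\la$ subgraphs when $\la \ge \la^*$ --- exactly the regime in which $\F(\la)$ itself admits no finite forbidden-subgraph characterization (Shearer's theorem). I expect this step to need a genuinely new structural ingredient; an alternative would be an inductive deletion argument, removing one ``good'' vertex set at a time while using the forbidden subgraphs to ensure the pieces are separated enough that each deletion lowers the multiplicity of $\la$ by exactly one.
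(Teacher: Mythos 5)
The statement you are trying to prove is stated in the paper as a \emph{conjecture}; the paper offers no proof of it (it only cites Woo--Neumaier's structure theory for $\la\in(2,3/\sqrt{2}]$ and \emph{asserts}, without proof, a multiplicity bound for quipus and daggers as evidence). So there is no proof in the paper to compare against, and your proposal, by your own admission, does not close the argument either.

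Your preliminary reductions are correct and worth recording: for $\la<\la^*$ with $\la\notin\sset{\al_2,\al_3,\dots}$ the hypothesis is vacuous because Theorem~\ref{forb_main} forces $G\in\F(\la)$ and Perron--Frobenius then forbids $\la_2(G)=\la_1(G)$; for $\la$ not a totally real algebraic integer (including $\la^*$ and, by Proposition~\ref{not_real}, the $\al_m$) the hypothesis is again vacuous. Your nodal-domain computation is also sound: on a strong nodal domain $U$ one gets $A(G[U])f|_U\ge\la f|_U$ entrywise with $f|_U>0$, hence $\la_1(G[U])\ge\la$, and after forbidding the finitely many connected graphs on fewer than $k(\la)$ vertices with spectral radius $>\la$, every strong nodal domain has order $\ge k(\la)$.

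The genuine gap is the step you flag yourself: you need, for \emph{every} $\la>\la^*$ that is the spectral radius of a graph, either a single $\la$-eigenvector with at least $\mu$ strong nodal domains or some other way to produce $\mu$ pairwise disjoint vertex sets each inducing a subgraph of spectral radius $\ge\la$. Neither linear algebra nor Courant-type theorems supply a \emph{lower} bound on nodal counts of an individual eigenvector, and the structural input you would need --- a decomposition of every connected graph avoiding a suitable finite family into a bounded core plus $\ge\mu$ separated near-extremal pieces --- is precisely the regime ($\la\ge\la^*$) where Shearer's theorem shows no finite forbidden-subgraph characterization of $\F(\la)$ exists, so it cannot be extracted from the paper's machinery. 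Until that ingredient is supplied, what you have is a correct reduction of the conjecture to its hard core, not a proof.
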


In this direction, Woo and Neumaier~\cite{MR2365422} investigated the structure of graphs whose spectral radius is in $(2, 3/\sqrt{2}]$. In particular, such a graph is either an open quipu\footnote{An \emph{open quipu} is a tree of maximum degree $3$ such that all vertices of degree 3 lie on a path.}, a closed quipu\footnote{A \emph{closed quipu} is a connected graph of maximum degree $3$ such that all vertices of degree $3$ lie on a unique cycle.} or a dagger\footnote{A \emph{dagger} is $A_n$ defined in Lemma~\ref{abcdef}.}, for which we assert that the multiplicity of any eigenvalue larger than $2$ is at most $2$.

\section*{Acknowledgements}

Thanks to Boris Bukh for introducing equiangular lines to the first author, and to Jun Su and Sebastian Cioab\u{a} for useful correspondence. We wish to express our deep appreciation to the referee for meticulous reading, and for pointing out a mistake in Theorem~\ref{forb_main} and many other inaccuracies in an earlier version of the manuscript. All remaining errors are ours.

\bibliographystyle{alpha}
\bibliography{equiangular_lines}

\appendix

\section{Proof of Lemma~\ref{abcdef}} \label{app}

Given a connected graph $G$, let $A$ be its adjacency matrix. The Perron--Frobenius theorem asserts that the eigenvector $f\from V(G)\to \R$ corresponding to the unique largest eigenvalue of $A$ can be chosen so that all of its components are positive. As an eigenvector $f'$ corresponding to any other eigenvalue is orthogonal to $f$, $f'$ must have at least one negative component. Therefore to compute $\la_1(G)$ it suffices to demonstrate an eigenvector whose components are positive.

\begin{proof}[Proof of Lemma~\ref{abcdef}(b\textsubscript{1})]
  The eigenvector that maps the leaves to $1$ and the rest of the vertices to $2$ gives $\la_1(B_{1,n,1})=2$.
\end{proof}

\begin{proof}[Proof of Lemma~\ref{abcdef}(c)]
  The constant eigenvector that maps every vertex to $1$ gives $\la_1(C_n) = 2$.
\end{proof}

\begin{proof}[Proof of Lemma~\ref{abcdef}(p)]
  Let $v_1, \dots, v_n$ be the path $P_n$. The eigenvector that maps $v_i$ to $\sin(\frac{\pi i}{n+1})$ gives $\la_1(P_n) = 2\cos(\frac{\pi}{n+1})$.
\end{proof}

\begin{proof}[Proof of Lemma~\ref{abcdef}(s)]
  The eigenvector that maps the leaves to $1$ and the vertex of degree $n$ to $\sqrt{n}$ gives $\la_1(S_n) = \sqrt{n}$.
\end{proof}

For the proof of other facts in Lemma~\ref{abcdef}, we shall use the following lemmas due to Hoffman.

\begin{lemma}[Lemma~3.4 of Hoffman~\cite{MR0347860}] \label{lim0}
  Let $A_{-1}$ be a principal submatrix of order $n-1$ of a symmetric matrix $A_0$ of order $n$ with non-negative entries. Define $A_{i+1}$ recursively by \[
    A_{i+1} = \begin{pmatrix}
      A_i & e_i^T \\
      e_i & 0
    \end{pmatrix}, \text{ where }e_i = \begin{pmatrix}
      0 & 0 & \dots & 0 & 1
    \end{pmatrix}.
  \] Assume further that $\lim_{i\to\infty}\la_1(A_i) > 2$. Then $\lim_{i\to\infty}\la_1(A_i)$ is the largest positive root of
  \begin{equation} \label{charpoly}
    \left(\frac{x+\sqrt{x^2-4}}{2}\right)p_0(x) = p_{-1}(x),
  \end{equation}
  where $p_i$ is the characteristic polynomial of $A_i$ for $i=-1, 0$.
\end{lemma}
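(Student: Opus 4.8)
The plan is to turn the recursive construction of the $A_i$ into a three-term recursion for their characteristic polynomials, solve that recursion explicitly, and read off the limit. Write $p_i$ for the characteristic polynomial of $A_i$, so that $p_{-1}$ and $p_0$ are the given data. Since the vertex adjoined in passing from $A_i$ to $A_{i+1}$ is adjacent only to the vertex adjoined at the previous step (or to the last vertex of $A_0$ when $i=0$), expanding $\det(xI-A_{i+1})$ along its last row gives $p_{i+1}(x)=x\,p_i(x)-p_{i-1}(x)$ for every $i\ge 0$; moreover each $p_i$ is monic of degree $n+i$ and its largest real root is $\la_1(A_i)$. By Cauchy interlacing $\la_1(A_0)\le\la_1(A_1)\le\cdots$, so $\mu:=\lim_i\la_1(A_i)$ exists, and by hypothesis it is finite and $>2$. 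Replacing $A_0$ by the connected component that carries the pendant path — which factors $p_0$ and $p_{-1}$ by a common factor and hence changes \eqref{charpoly} only at the roots of that factor, an easy special case — I may assume every $A_i$ is irreducible, so $\la_1(A_i)$ is a \emph{simple} root of $p_i$ with positive Perron eigenvector and $\la_1(A_{i-1})<\la_1(A_i)\upto\mu$.

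\emph{Solving the recursion.} For $x>2$ the equation $t^2-xt+1=0$ has the real roots $t_\pm(x)=\tfrac12\bigl(x\pm\sqrt{x^2-4}\bigr)$, with $0<t_-<1<t_+$ and $t_+t_-=1$; hence $p_i(x)=a(x)\,t_+(x)^i+b(x)\,t_-(x)^i$ for all $i\ge-1$, where matching the $i=0,-1$ cases gives $a(x)=\bigl(t_+(x)p_0(x)-p_{-1}(x)\bigr)/\sqrt{x^2-4}$ and $b(x)=p_0(x)-a(x)$. For $x>2$, equation \eqref{charpoly} is exactly $a(x)=0$, so it suffices to prove $\mu$ is the largest root of $a$ on $(2,\infty)$. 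That $a(\mu)=0$ is immediate: for $i$ large, $p_i(\la_1(A_i))=0$ forces $a(\la_1(A_i))=-\,b(\la_1(A_i))\bigl(t_-(\la_1(A_i))/t_+(\la_1(A_i))\bigr)^i$, whose right-hand side tends to $0$ because $t_-/t_+<1$ near $\mu$ and $b$ is continuous; since $\la_1(A_i)\to\mu$ and $a$ is continuous at $\mu>2$, we get $a(\mu)=0$.

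\emph{No root beyond $\mu$.} For $x>\mu$ we have $x>\la_1(A_i)$ for every $i$, hence $p_i(x)>0$ for all $i$; because $t_+>1>t_->0$, the sign of $p_i(x)$ agrees with that of $a(x)$ once $i$ is large (when $a(x)\ne 0$), so $a(x)\ge 0$. To rule out $a(x)=0$ — which would force $p_i(x)=p_0(x)t_-(x)^i\to 0$ — I would show $p_i(x)\to\infty$ for every $x>\mu$. To this end, delete from $A_i$ the vertex at which the pendant path attaches: the resulting codimension-one principal submatrix is $A_{-1}\sqcup P_i$, with characteristic polynomial $\chi_{A_{-1}}(x)\,\bigl(t_+(x)^{i+1}-t_-(x)^{i+1}\bigr)/\bigl(t_+(x)-t_-(x)\bigr)$. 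A telescoping estimate from Cauchy interlacing then gives $p_i(x)\ge c(x)\,\chi_{A_{-1}}(x)\,\bigl(t_+(x)^{i+1}-t_-(x)^{i+1}\bigr)/\bigl(t_+(x)-t_-(x)\bigr)$ for an $i$-independent constant $c(x)>0$ and all large $i$ (every factor is positive since $x>\mu\ge\max(\la_1(A_{-1}),2)$), and as $t_+(x)>1$ this tends to $\infty$. (Equivalently, the empirical spectral distribution of $A_i$ converges to the arcsine law $\nu$ on $[-2,2]$ as the pendant path lengthens, so $\tfrac{1}{n+i}\log p_i(x)\to\int\log(x-t)\,d\nu(t)=\log t_+(x)>0$.) Hence $a$ has no zero on $(\mu,\infty)$, and with $a(\mu)=0$ this shows $\mu$ is the largest positive root of \eqref{charpoly}.

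\emph{The main obstacle.} Setting up the recursion and proving $a(\mu)=0$ are routine; the crux is the last step. Every monotonicity fact at hand is consistent with a spurious root of \eqref{charpoly} lying above $\mu$, so one genuinely needs the sharp exponential growth $p_i(x)\asymp t_+(x)^i$ for $x>\mu$, which is forced by the spectrum of $A_i$ being asymptotically governed by the ever-longer pendant path. Reducing to the irreducible case and keeping track of the positive constant in the interlacing bound are the remaining, minor, technical points.
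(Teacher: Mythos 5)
The paper does not prove this lemma at all: it is quoted verbatim as Lemma~3.4 of Hoffman \cite{MR0347860}, so there is no in-paper argument to compare yours against. Judged on its own, your proof is correct and complete in its essentials, and it follows the natural (and, as far as I can tell, essentially Hoffman's) route: the three-term recursion $p_{i+1}=xp_i-p_{i-1}$, the explicit solution $p_i=a\,t_+^i+b\,t_-^i$ with $a(x)=\bigl(t_+(x)p_0(x)-p_{-1}(x)\bigr)/\sqrt{x^2-4}$, the identification of \eqref{charpoly} with $a(x)=0$, the limiting argument giving $a(\mu)=0$, and the growth estimate $p_i(x)\ge (x-\mu)\,p_{-1}(x)\,\bigl(t_+^{i+1}-t_-^{i+1}\bigr)/(t_+-t_-)\to\infty$ for $x>\mu$ (which follows from the standard interlacing inequality $\chi_M(x)\ge (x-\la_1(M))\chi_{M\setminus v}(x)$ for $x>\la_1(M)$, exactly the ``telescoping'' you describe). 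You correctly identify that the only nontrivial point is excluding a root of $a$ above $\mu$, and your resolution is sound.

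Three minor remarks, none of which I would call gaps. First, the recursion $p_1=xp_0-p_{-1}$ requires $A_{-1}$ to be the principal submatrix obtained by deleting the \emph{last} row and column of $A_0$ (the index to which the path is appended); the statement as transcribed says only ``a principal submatrix of order $n-1$,'' and you silently adopt the intended reading — worth one sentence. Second, you assert that $\mu$ is finite ``by hypothesis,'' but the hypothesis only gives $\mu>2$; finiteness is immediate anyway since the maximum row sum of $A_i$ is bounded independently of $i$. Third, the reduction to the irreducible case is sketched rather loosely, but nothing in your subsequent argument actually uses irreducibility, simplicity of $\la_1(A_i)$, or strict monotonicity — only $\la_1(A_i)\le\mu$, $\la_1(A_i)\to\mu$, and positivity of $p_i$ and $p_{-1}$ above $\mu$ — so that paragraph can be deleted without loss.
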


\begin{definition}
  Let $G$ be a connected graph, and let $v$ be a vertex of $G$. Denote $(G, v, n)$ the graph obtained from $G$ by appending a path of $n$ vertices to $G$ at $v$. Let $G_1, G_2$ be disjoint connected graphs, and let $v_1, v_2$ be vertices of $G_1, G_2$ respectively. Define $(G_1, v_1, n, v_2, G_2)$ to be the graph obtained from $G_1$ and $G_2$ by joining them by a path of $n$ vertices connecting $v_1$ and $v_2$.
\end{definition}

\begin{figure}[ht]
  \centering
  \begin{tikzpicture}[thick, scale=0.5, baseline=(v.base)]
    \draw[thin, fill=litegray, opacity=0.5] (-6,0) ellipse (2 and 1);
    \draw (-5,0) node[left]{$v$};
    \draw (-7,0) node{$G$};
    \draw[darkgray] (-5,0) node[vertex]{} -- (-3,0) node[vertex]{} -- (-2,0) node[vertex]{} -- (-1,0) node[vertex]{} -- (0,0) node[vertex]{} -- (1,0) node[vertex]{};
    \draw [braket] (1,0) -- (-3,0) node [black,midway,yshift=-15pt] {\footnotesize $n$};
    \draw (-8, 0) node[left]{$(G, v, n) = $};
  \end{tikzpicture}\qquad
  \begin{tikzpicture}[thick, scale=0.5, baseline=(v.base)]
    \draw[thin, fill=litegray, opacity=0.5] (-6,0) ellipse (2 and 1);
    \draw[thin, fill=litegray, opacity=0.5] (4,0) ellipse (2 and 1);
    \draw (-5,0) node[left]{$v_1$};
    \draw (-7,0) node{$G_1$};
    \draw (3,0) node[right]{$v_2$};
    \draw (5,0) node{$G_2$};
    \draw[darkgray] (-5,0) node[vertex]{} -- (-3,0) node[vertex]{} -- (-2,0) node[vertex]{} -- (-1,0) node[vertex]{} -- (0,0) node[vertex]{} -- (1,0) node[vertex]{} -- (3,0) node[vertex]{};
    \draw [braket] (1,0) -- (-3,0) node [black,midway,yshift=-15pt] {\footnotesize $n$};
    \draw (-8, 0) node[left]{$(G_1, v_1, n, v_2, G_2) = $};
  \end{tikzpicture}
\end{figure}

\begin{remark} \label{lim1}
  When we apply Lemma~\ref{lim0} to the adjacency matrix of a graph, we get the following interpretation. Let $G$ be a connected graph, and let $v$ be a vertex of $G$. Assume further that $\la_1(G, v, n) \ge 2$ for some $n$. Then $\lim\la_1(A_n)$ is the largest positive root of \eqref{charpoly}, where $p_{-1}, p_0$ are the characteristic polynomials of $G\setminus\sset{v}$ and $G$ respectively.
\end{remark}

\begin{lemma}[Proposition~4.2 of Hoffman~\cite{MR0347860}] \label{lim2}
  Let $G_1, G_2$ be disjoint connected graphs, $v_1, v_2$ vertices of degree $\ge 2$ of $G_1, G_2$ respectively. Then $$\lim_{n\to\infty}\la_1(G_1, v_1, n, v_2, G_2) = \max\sset{\lim_{n\to\infty}\la_1\left(G_1, v_1, n\right), \lim_{n\to\infty}\la_1\left(G_2, v_2, n\right)}.$$
\end{lemma}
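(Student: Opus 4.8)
The plan is to control, for real $x>2$, the characteristic polynomial of $H_n:=(G_1,v_1,n,v_2,G_2)$ and that of $(G_i,v_i,n)$ simultaneously, exploiting the transfer‑matrix behaviour of the connecting path exactly as in Lemma~\ref{lim0}. First note the two limits on the right‑hand side exist: $(G_i,v_i,n)$ is a proper connected subgraph of $(G_i,v_i,n+1)$, so $\la_1(G_i,v_i,n)$ is strictly increasing, and it is bounded by $\Delta(H_n)\le\Delta:=\max(\Delta(G_1),\Delta(G_2))+1$ independently of $n$; write $\mu_i:=\lim_n\la_1(G_i,v_i,n)$ and $M:=\max(\mu_1,\mu_2)$, noting $\mu_i\ge 2$ (each $(G_i,v_i,n)$ contains arbitrarily long paths) and $\mu_i>\la_1(G_i)$. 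The inequality ``$\ge$'' is immediate: $H_n$ contains $(G_1,v_1,n)$ and $(G_2,v_2,n)$ as subgraphs, so $\la_1(H_n)\ge\max\{\la_1(G_1,v_1,n),\la_1(G_2,v_2,n)\}\to M$; this also bounds $\la_1(H_n)$, so it remains to prove $\limsup_n\la_1(H_n)\le M$.

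For the upper bound I would use the polynomial identity obtained by deleting the bridges of the connecting path via Schwenk's edge formula, namely $p_{H_n}=p_{(G_1,v_1,n)}p_{G_2}-p_{(G_1,v_1,n-1)}p_{G_2\setminus v_2}$, and then iterating the pendant‑path recursion $p_{(G,v,k)}=p_G\,p_{P_k}-p_{G\setminus v}\,p_{P_{k-1}}$ to get
\[
  p_{H_n}=p_{G_1}p_{G_2}\,p_{P_n}-\big(p_{G_1\setminus v_1}p_{G_2}+p_{G_1}p_{G_2\setminus v_2}\big)p_{P_{n-1}}+p_{G_1\setminus v_1}p_{G_2\setminus v_2}\,p_{P_{n-2}}.
\]
For $x>2$, with $\phi=\phi(x):=\tfrac{1}{2}(x+\sqrt{x^2-4})>1$ one has $p_{P_k}(x)=(\phi^{k+1}-\phi^{-k-1})/(\phi-\phi^{-1})$; substituting and collecting powers of $\phi$ yields
\[
  p_{H_n}(x)=\frac{\phi^{n+1}R_1(x)R_2(x)-\phi^{-n-1}\tilde R_1(x)\tilde R_2(x)}{\phi-\phi^{-1}},\qquad p_{(G_i,v_i,n)}(x)=\frac{\phi^{n+1}R_i(x)-\phi^{-n-1}\tilde R_i(x)}{\phi-\phi^{-1}},
\]
where $R_i:=p_{G_i}-\phi^{-1}p_{G_i\setminus v_i}$ and $\tilde R_i:=p_{G_i}-\phi\,p_{G_i\setminus v_i}$ are continuous on $(2,\infty)$ with $R_i(x)\to+\infty$ (leading term $x^{v(G_i)}$).

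The heart of the matter is to show $R_1(x)R_2(x)>0$ for all $x>M$. When $\mu_i>2$, the equation in Lemma~\ref{lim0} applied to $(G_i,v_i,n)$ is precisely $\phi\,p_{G_i}=p_{G_i\setminus v_i}$, i.e.\ $R_i=0$, and Remark~\ref{lim1} identifies $\mu_i$ as its largest positive root; hence $R_i$ has no zero in $(\mu_i,\infty)$, and being positive at $+\infty$ and continuous it is positive throughout $(\mu_i,\infty)$, so $R_1R_2>0$ on $(M,\infty)$. Granting this, fix $\eps>0$ with $M+\eps<\Delta+1$: on the compact interval $[M+\eps,\Delta+1]$ the continuous function $R_1R_2$ is bounded below by some $\delta>0$, $\tilde R_1\tilde R_2$ is bounded, and $\phi\ge\phi(M+\eps)>1$, so $p_{H_n}>0$ there once $n$ is large; and $p_{H_n}(x)>0$ for $x>\Delta+1$ automatically since $\la_1(H_n)\le\Delta$. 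Therefore $\la_1(H_n)<M+\eps$ for all large $n$, so $\limsup_n\la_1(H_n)\le M$, which together with ``$\ge$'' gives $\lim_n\la_1(H_n)=\max(\mu_1,\mu_2)$.

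The step I expect to be the main obstacle is the boundary case $\mu_i=2$, which Lemma~\ref{lim0} does not cover. One should first observe that $\mu_i=2$ forces $G_i$ to be the path $P_3$ with $v_i$ its centre: $\mu_i=2$ means every $(G_i,v_i,n)$ lies in $\F(2)$ with $\la_1<2$, hence by Smith's classification~\cite{MR0266799} is, for all large $n$, a path or a $D$‑type tree; as $v_i$ has degree $\ge 3$ in $(G_i,v_i,n)$ only the $D$‑type is possible, and matching the appended path at $v_i$ forces $G_i=P_3$. For this single graph one checks directly that $R_i(x)=p_{P_3}(x)-\phi(x)^{-1}x^2>0$ for $x>2$ (a short computation reduces the claim to $\sqrt{x^2-4}\,\big(x+\sqrt{x^2-4}\big)>0$), which keeps $R_1R_2>0$ on $(M,\infty)$ valid; alternatively, when both $G_1,G_2$ are this exceptional graph, the eigenvector equal to $1$ on every leaf and to $2$ elsewhere shows $\la_1(H_n)=2$ for all $n$. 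Besides this wrinkle, the remaining work — the bridge‑removal/pendant‑path bookkeeping behind the identity, and the routine uniform estimate turning pointwise positivity of $R_1R_2$ into the absence of eigenvalues past $M+\eps$ — is straightforward.
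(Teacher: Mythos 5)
Your proposal is correct. Note that the paper itself gives no proof of this lemma --- it is quoted verbatim as Proposition~4.2 of Hoffman~\cite{MR0347860} --- so there is no in-paper argument to compare against; what you have done is reconstruct Hoffman's result from scratch, and the reconstruction holds up. The bridge-deletion identity and the pendant-path recursion are both correct instances of Schwenk's formula, the algebra collecting powers of $\phi$ checks out (your expression for $p_{P_k}$ agrees with \eqref{rec}), and the reduction of the whole problem to the sign of $R_1R_2$ on $(M,\infty)$ is exactly the right move: it makes the connection with Lemma~\ref{lim0} transparent, since the equation \eqref{charpoly} for $(G_i,v_i,n)$ is precisely $R_i=0$, so ``$\mu_i$ is the largest positive root'' immediately kills all zeros of $R_i$ beyond $\mu_i$. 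You also correctly identified and closed the one genuine gap, the boundary case $\mu_i=2$ where Lemma~\ref{lim0} is inapplicable: Smith's classification does force $G_i=P_3$ rooted at its centre there (the degree count rules out paths and the $E$-type trees are bounded), and your direct computation $R_i=\tfrac{1}{2}x\sqrt{x^2-4}\,(x+\sqrt{x^2-4})>0$ is right. The uniform positivity argument on $[M+\eps,\Delta+1]$ together with $\la_1(H_n)\le\Delta$ is a clean way to convert pointwise positivity of $R_1R_2$ into $\la_1(H_n)<M+\eps$. The only cosmetic remark: the final eigenvector check for the doubly exceptional case $G_1=G_2=P_3$ is redundant given your sign computation (and is anyway the $B_{1,n,1}$ computation of Lemma~\ref{abcdef}(b\textsubscript{1})).
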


\begin{definition}
  Let $e$ be an edge of a graph $G$. If there exists a path in $G$, $x_1, x_2, \dots, x_k$ where $x_{k-1}$ and $x_k$ are the end vertices of $e$, and the degrees of $x_1, x_2, \dots, x_{k-1}$ are respectively $1, 2, 2, \dots, 2$, then $e$ is said to be on an \emph{end path} of $G$.
\end{definition}

\begin{lemma}[Proposition~4.1 of Hoffman~\cite{MR0347860}] \label{mono}
  Let $G$ be a connected graph with $\la_1(G) > 2$, $e = (x, y)$ an edge of $G$ not on an end path of $G$. Let $G^+_e$ be the graph obtained from $G$ by deleting edge $e$, and adding a vertex $z$ adjacent to $x$ and $y$ only. Then $\la_1(G^+_e) < \la_1(G)$.
\end{lemma}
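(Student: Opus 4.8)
The plan is to deduce the strict inequality from the Perron--Frobenius \emph{subinvariance} criterion: if $H$ is connected with adjacency matrix $A_H$ and some vector $g$ with all entries positive satisfies $A_Hg\le\rho g$ coordinatewise with strict inequality in at least one coordinate, then $\la_1(H)<\rho$ (pair $g$ with the Perron vector of $H$ and compare inner products). Applying this with $H=G^+_e$ and $\rho:=\la_1(G)$, it suffices to build a positive vector $g$ on $V(G^+_e)=V(G)\cup\{z\}$ with $A^+g\le\la_1(G)\,g$ everywhere and strict inequality somewhere, where $A^+$ is the adjacency matrix of $G^+_e$.

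First I would record the structure forced by the hypotheses. Since $\la_1(G)>2$, $G$ is neither a path nor a cycle, so it has a vertex of degree $\ge3$; and since $e=(x,y)$ is not on an end path, walking away from $e$ along degree-$2$ vertices from $x$ (resp.\ from $y$) reaches a vertex of degree $\ge3$ --- call them $x^\sharp,y^\sharp$ --- never a leaf. These anchors determine internally $2$-valent paths $P\colon x^\sharp=u_0,u_1,\dots,u_a=x$ and $Q\colon y=v_0,v_1,\dots,v_b=y^\sharp$; together with $e$ they form a path of $G$ (a cycle if $x^\sharp=y^\sharp$), and the subdivision replaces this segment by $x^\sharp,u_1,\dots,u_{a-1},x,z,y,v_1,\dots,v_{b-1},y^\sharp$, which has exactly one more interior vertex.

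The test vector: take $g:=f$, the Perron eigenvector of $G$, at every vertex of $G$ outside the interior of this segment, and redefine $g$ on the (one-vertex-longer) segment so that all equations survive. Writing $\la_1(G)=t+t^{-1}$ with $t>1$, the restriction of $f$ to $P\cup e\cup Q$ satisfies the three-term recursion $\la_1(G)f(w)=f(w^-)+f(w^+)$ at its interior vertices, with the two boundary values $f(x^\sharp),f(y^\sharp)$. I would let $g$ on the new segment keep those two boundary values: equal to $f$ up to a chosen interior position, then repeating one value of $f$ once to absorb the extra vertex, then continuing as $f$ shifted by one. At the duplicated spot the recursion degenerates to a strict inequality in the correct direction: because $\la_1(G)>2$ one cannot have $f(w^-)+f(w^+)=\la_1(G)f(w)$ with $f(w^-)=f(w^+)$, so choosing the duplicate at an interior vertex $w$ with $f(w)\ge\tfrac1{\la_1(G)-1}\max\{f(w^-),f(w^+)\}$ (e.g.\ a local maximum of $f$ along the segment) makes $A^+g\le\la_1(G)g$ strict there; equality is retained at all other interior vertices, and the two anchor equations stay valid because $g$ still agrees with $f$ at the segment vertex adjacent to each anchor, so these vertices get no extra load. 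Subinvariance then yields $\la_1(G^+_e)<\la_1(G)$.

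The step I expect to be the main obstacle is making this robust in every configuration. The crux is locating an interior vertex $w$ with both $f(w^-)\le(\la_1(G)-1)f(w)$ and $f(w^+)\le(\la_1(G)-1)f(w)$: the local-maximum choice works unless that maximum sits next to an anchor whose Perron weight is much larger (equivalently, $f$ decays rapidly along the whole segment toward one anchor), and there are also the degenerate cases $a=0$ or $b=0$ (an endpoint of $e$ already has degree $\ge3$, so $P$ or $Q$ is trivial) and $x^\sharp=y^\sharp$ (the segment closes into a cycle). For these I would either modify the redistribution --- scaling down an entire branch hanging off the heavier anchor to create slack there, or taking the inserted value strictly below the geometric interpolant to exploit the slack available at the anchors --- or fall back on the characteristic-polynomial identity
\[
  \phi(G^+_e,x)-\phi(G,x)=(x-1)\phi(G-e,x)-\phi(G-x,x)-\phi(G-y,x)+\phi(G-x-y,x),
\]
and analyze its sign at $x=\la_1(G)$ using interlacing between $G$, $G-e$, and the vertex-deleted subgraphs. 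Either way I expect the arithmetic of the recursion $\la_1(G)=t+t^{-1}$ along the segment to be the computational heart.
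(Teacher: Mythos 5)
The paper offers no proof of this lemma---it is quoted as Proposition~4.1 of Hoffman's paper---so your argument must stand on its own, and it has a genuine gap at its core. The subinvariance criterion is fine, but the test-vector construction it rests on fails in cases squarely covered by the hypotheses. First, your heuristic points the wrong way: at an interior vertex $w$ of the segment the Perron vector satisfies $f(w^-)+f(w^+)=\la_1(G)f(w)>2f(w)$, so $f$ is strictly convex along the segment and has \emph{no} interior local maximum; your feasibility condition $\max\sset{f(w^-),f(w^+)}\le(\la_1(G)-1)f(w)$ is, given the eigenvalue equation at $w$, equivalent to $w$ being a weak local \emph{minimum}. Second, and more seriously, no admissible insertion point need exist. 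Take $G$ to be the double star with $e=(x,y)$, where $x$ carries $5$ pendant leaves and $y$ carries $2$; then $e$ is not on an end path, $\la_1(G)^2=4+\sqrt 6$ so $\la_1(G)\approx 2.54$, and the eigenvalue equations give $f(x)/f(y)=(\la_1(G)^2-2)/\la_1(G)\approx 1.75>\la_1(G)-1\approx 1.54$. Here the segment has no interior vertices at all, so the only freedom is the value $c$ at the new vertex $z$; keeping $g=f$ elsewhere forces $c\le f(y)$ (inequality at $x$), $c\le f(x)$ (at $y$) and $\la_1(G)\,c\ge f(x)+f(y)$ (at $z$), which is infeasible precisely because $f(x)/f(y)>\la_1(G)-1$. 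One can check directly that $\la_1(G^+_e)^2=(9+\sqrt{13})/2<4+\sqrt6$, so the lemma is true here but your certificate does not exist.

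Thus any valid test vector must perturb $f$ away from the subdivided edge, which is exactly the step you defer to ``scaling down a branch'' or ``exploiting slack at the anchors'' without carrying out---and note there is no slack at the anchors to exploit, since $f$ satisfies the eigenvalue equation exactly there. Your fallback identity for $\phi(G^+_e)-\phi(G)$ is correct (it follows from Schwenk's vertex- and edge-deletion formulas), but evaluating it at $\la_1(G)$ pits $(\la_1(G)-1)\phi(G-e)+\phi(G-x-y)$ against $\phi(G-x)+\phi(G-y)$, all four quantities positive there, and deciding the sign is precisely where the ``not on an end path'' hypothesis must enter (subdividing an edge on a pendant path can \emph{increase} the spectral radius, so no argument ignoring that hypothesis can succeed). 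That sign analysis is the entire content of the proof and is missing.
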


The monotonicity of the spectral radii of each family of graphs in Lemma~\ref{abcdef}(a,e,f) follows immediately from the monotonicity of the spectral radii. The monotonicity in Lemma~\ref{abcdef}(b\textsubscript{2},d) follows from Lemma~\ref{mono} and the facts that $\la_1(B_{m_1,0,m_2}) > \la_1(B_{1,0,1}) = 2$ for $(m_1,m_2)\neq (1,1)$ and $\la_1(D_2) > 2$.

We are left to compute the limits.

\begin{proof}[Proof of Lemma~\ref{abcdef}(a)]
  Note that $A_n = (S_3, v, n)$, where $v$ is the vertex of degree $3$ in $S_3$. Note that $\la_1(A_1) = 2$. By Remark~\ref{lim1}, $\lim \la_1(A_n)$ is the largest positive root of \[
    \left(\frac{x+\sqrt{x^2-4}}{2}\right)x^2(x^2-3) = x^3,
  \] which turns out to be $3/\sqrt{2}$.
\end{proof}

\begin{proof}[Proof of Lemma~\ref{abcdef}(f)]
  Note that $F_n = (P_5, v, n)$, where $v$ is the third vertex of $P_5$. Observe that $\la_1(F_2) = 2$. By Remark~\ref{lim1}, $\lim\la_1(F_n)$ is the largest positive root of
  \[
    \left(\frac{x+\sqrt{x^2-4}}{2}\right)\left(x^5 - 4x^3 + 3x\right) = \left(x^2-1\right)^2,
  \] which turns out to be $\la^*$.
\end{proof}

We need the characteristic polynomials of paths and cycles for Lemma~\ref{abcdef}(d, e). The readers are invited to derive them by reduction and induction.

\begin{lemma} \label{pc}
  Denote $p_n$ and $q_n$ the characteristic polynomials of $P_n$ and $C_n$ respectively. Then
  \begin{gather*}
    p_0(x) = 1,\quad p_1(x) = x, \quad p_{n} = xp_{n-1}(x) - p_{n-2}(x),\\
    q_{n+1}(x) = p_{n+1}(x) - p_{n-1}(x)-2, \quad\text{for all }n\ge 2.
  \end{gather*}
  Moreover, the recursions give
  \begin{equation} \label{rec}
    p_n(x) = \frac{\theta^n}{1-\theta^{-2}}+\frac{\theta^{-n}}{1-\theta^2}, \quad q_n(x) = \theta^n+\theta^{-n} - 2,
  \end{equation}
  where $\theta = \theta(x) := \frac{x+\sqrt{x^2-4}}{2}$.
\end{lemma}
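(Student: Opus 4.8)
The plan is to obtain the two recursions by cofactor expansion of the defining determinants --- the \emph{reduction} --- and then to read off the closed forms by solving the resulting linear recurrence in $n$. For the path, write $p_n(x) = \det(xI_n - A(P_n))$ with the vertices of $P_n$ labelled $1, \dots, n$ along the path, and expand along the last row, in which vertex $n$ is a leaf adjacent only to vertex $n-1$. The diagonal entry $x$ contributes $x$ times the minor obtained by deleting row and column $n$, namely $p_{n-1}(x)$; expanding the single off-diagonal $-1$ of that row once more contributes $-p_{n-2}(x)$. This gives $p_n = x p_{n-1} - p_{n-2}$ for $n \ge 2$, while $p_0 = 1$ (the empty determinant) and $p_1 = x$ are immediate.

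For the cycle, label the vertices of $C_{n+1}$ as $1, \dots, n+1$ and single out the closing edge $e = \{n+1, 1\}$, so that $C_{n+1} - e = P_{n+1}$ and $C_{n+1} - \{1, n+1\} = P_{n-1}$. I would invoke the edge-reduction identity for characteristic polynomials: for an edge $e = \{u,v\}$ of a graph $G$, the characteristic polynomial $\phi(G)$ satisfies $\phi(G) = \phi(G-e) - \phi(G-\{u,v\}) - 2\sum_Z \phi(G - V(Z))$, the sum running over all cycles $Z$ of $G$ through $e$. For $G = C_{n+1}$ the only such cycle is $C_{n+1}$ itself, whose vertex complement is the empty graph with characteristic polynomial $1$, so $q_{n+1} = p_{n+1} - p_{n-1} - 2$ for $n \ge 2$ (the range in which $P_{n-1}$ and the reduction make sense). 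If one prefers not to quote this identity, the same equality falls out of the Leibniz expansion of $\det(xI_{n+1} - A(C_{n+1}))$: the permutations that use no corner entry reconstitute $p_{n+1}$; the transposition $(1\ n{+}1)$ composed with a tridiagonal permutation of $\{2,\dots,n\}$ contributes $-p_{n-1}$; and the two $(n+1)$-cycles $i \mapsto i\pm 1$ each contribute $(-1)^{n}(-1)^{n+1} = -1$, for a total of $-2$.

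Finally, the closed forms come from the recursion. The characteristic equation $t^2 - xt + 1 = 0$ of $p_n = x p_{n-1} - p_{n-2}$ has roots whose product is $1$ and whose sum is $x$, hence $\theta = \theta(x) = \frac{x+\sqrt{x^2-4}}{2}$ and $\theta^{-1}$; writing $p_n = A\theta^n + B\theta^{-n}$ and imposing $p_0 = 1$ and $p_1 = x = \theta + \theta^{-1}$ gives $A = (1-\theta^{-2})^{-1}$ and $B = (1-\theta^2)^{-1}$, which is the stated formula for $p_n$. Substituting into $q_n = p_n - p_{n-2} - 2$ and using $\theta^n - \theta^{n-2} = \theta^n(1-\theta^{-2})$ and $\theta^{-n} - \theta^{-n+2} = \theta^{-n}(1-\theta^2)$ collapses the fractions to $q_n = \theta^n + \theta^{-n} - 2$. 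One caveat: at $x = \pm 2$ one has $\theta = \theta^{-1} = \pm 1$ and the constants $A,B$ blow up, so these are to be read as identities of polynomials in $x$; they hold for every $x$ with $\abs{x} > 2$, hence everywhere, since both sides are polynomials and the apparent poles in $\theta$ cancel.

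I expect the cycle computation to be the only step needing genuine care: isolating the constant $-2$, that is, the contribution of the two ``wrap-around'' permutations, or equivalently justifying the single non-trivial term of the edge-reduction formula. The path recursion and the solution of the $2\times 2$ linear system for $A$ and $B$ are routine.
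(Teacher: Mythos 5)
Your proof is correct, and it follows exactly the route the paper prescribes: the paper gives no proof of Lemma~\ref{pc}, merely inviting the reader to ``derive them by reduction and induction,'' and your cofactor/Leibniz expansion for the recursions together with solving the linear recurrence $t^2 - xt + 1 = 0$ is that derivation, carried out correctly (including the sign bookkeeping for the two wrap-around $(n+1)$-cycles and the removable singularity at $x = \pm 2$). Nothing to add.
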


\begin{proof}[Proof of Lemma~\ref{abcdef}(e)]
  For the $m = 1$ case, $\la_1(E_{1, n})$ is at least the average degree $2-2/(n+3)$ and so $\lim\la_1(E_{1,n})\ge 2$.  On the other hand, assume for the sake of contradiction that $\lim\la_1(E_{1,n}) > 2$. By Remark~\ref{lim1}, $\lim\la_1(E_{1,n})$ is the largest positive root of \[
    \left(\frac{x+\sqrt{x^2-4}}{2}\right)x\left(x^2 - 2\right) = x^2,
  \] which turns out to be $2$ contradicting the assumption $\lim\la_1(E_{1,n}) > 2$.

  For the $m \ge 2$ case, because $\la_1(E_{m, 8}) \ge \la_1(E_{2,8}) = 2$ and $E_{m,n} = (P_{m+2}, v, n)$, where $v$ is the second vertex of $P_{m+2}$, Remark~\ref{lim1} gives that $\lim_{n\to\infty}E_{m,n}$ is the largest positive root~of \[
    \theta p_{m+2}(x) = xp_m(x),
  \] where $\theta$ and $p_i$ are defined as in Lemma~\ref{pc}. From this and \eqref{rec}, using $x = \theta + 1/\theta$ and $z = \theta^2$, we seek the largest root $z_m$ of \[
    z^{m+1} = 1 + z + \dots + z^{m-1}.
  \] By the definitions of $\beta_m$ and $\al_m$ in Theorem~\ref{forb_main}, this proves that $z_m = \beta_m$ and $\lim_{n}\la_1(E_{m,n}) = \al_m$.
\end{proof}

\begin{proof}[Proof of Lemma~\ref{abcdef}(b\textsubscript{2})]
  Let $v_1$ and $v_2$ be the second vertices in $P_{m_1+2}$ and $P_{m_2+2}$ respectively. By Lemma~\ref{lim2},
  \begin{align*}
    \lim_{n\to\infty}\la_1(B_{m_1,n,m_2}) & = \max\sset{\lim_{n\to\infty}\la_1(P_{m_1+2}, v_1, n), \lim_{n\to\infty}\la_1(P_{m_2+2},v_2, n)} \\ & = \max\sset{\lim_{n\to\infty}\la_1(E_{m_1,n}), \lim_{n\to\infty}\la_1(E_{m_2,n})} = \max(\al_{m_1}, \al_{m_2}).\qedhere
  \end{align*}
\end{proof}

\begin{proof}[Proof of Lemma~\ref{abcdef}(d)]
  Let $M_n$ be the adjacency matrix of $D_n$ and let $r_n$ be its characteristic polynomial. By expanding the determinant of $xI - M_n$ along the row indexed by the leaf of $D_n$, one can obtain that $\la_1(D_n)$ is the largest root of $$r_n(x) = xq_{n+1}(x)-p_n(x),$$ where $p_{n}$ and $q_{n+1}$ are defined as in Lemma~\ref{pc}. From this and \eqref{rec}, using $x = \theta + 1/\theta$ and $z = \theta^2$, we seek the largest root $z_n$ of $$\left(z^2-z-1\right)\left(1-z^{-(n+1)}\right)=2z^{-n/2}\left(z^{1/2}+z^{-1/2}\right).$$ As $z_n > 1$, we get $z_n^2-z_n-1>0$, hence $z_n > \phi$, where $\phi = \frac{1+\sqrt{5}}{2}$ is the golden ratio. As $n\to\infty$, the largest root $z_n$ tends to $\phi$, and so $\lim\la_1(D_n) = \phi^{1/2}+\phi^{-1/2} = \la^*$.
\end{proof}

\section{Proof of Proposition~\ref{not_real}} \label{prop_not_real}

Fix $m \ge 2$ and let $\beta$ be the largest root of the equation $x^{m+1} = 1 + x + \dots + x^{m-1}$. We shall prove that $\al := \beta^{1/2} + \beta^{-1/2}$ is an algebraic integer but not totally real. We need the following lemma to characterize the conjugate elements of $\al$.

\begin{lemma} \label{rational_trans}
  Suppose that the polynomial $p(x) \in \Q[x]$ of degree $n$ has $n$ complex roots, say $\gamma_1, \gamma_2, \dots, \gamma_n$, counted with multiplicity. If these roots are not the poles of a rational function $r(x) \in \Q(x)$, then $q(x) := \prod_{i=1}^n(x-r(\gamma_i))$ is a polynomial in $\Q[x]$.
\end{lemma}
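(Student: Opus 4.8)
The plan is to clear denominators and then invoke the fundamental theorem of symmetric polynomials. First I would fix a presentation $r(x) = f(x)/g(x)$ with $f, g \in \Q[x]$ coprime, so that the poles of $r$ are precisely the roots of $g$; the hypothesis that no $\gamma_i$ is a pole then becomes the statement that $g(\gamma_i) \neq 0$ for every $i$. With this normalization I would rewrite
\[
  q(x) = \prod_{i=1}^n\left(x - \frac{f(\gamma_i)}{g(\gamma_i)}\right) = \frac{\prod_{i=1}^n\bigl(x\,g(\gamma_i) - f(\gamma_i)\bigr)}{\prod_{i=1}^n g(\gamma_i)} =: \frac{N(x)}{D},
\]
where $D \neq 0$ by the previous sentence.

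Next I would show that $N(x) \in \Q[x]$ and $D \in \Q$. Treating $x$ as a formal indeterminate, each factor $x\,g(\gamma_i) - f(\gamma_i)$ is a polynomial in $x$ whose coefficients are polynomials in $\gamma_i$ with coefficients in $\Q$; hence the coefficients of the product $N(x)$ are polynomials in $\gamma_1,\dots,\gamma_n$ over $\Q$ that are invariant under all permutations of the $\gamma_i$. By the fundamental theorem of symmetric polynomials they lie in $\Q[e_1(\gamma),\dots,e_n(\gamma)]$, and since $\gamma_1,\dots,\gamma_n$ are exactly the roots of $p$ counted with multiplicity, each $e_j(\gamma)$ is $\pm$ a ratio of coefficients of $p$, hence rational. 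Therefore $N(x) \in \Q[x]$, and the identical argument (with no indeterminate present) gives $D = \prod_i g(\gamma_i) \in \Q$.

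Finally I would note that $D$ is exactly the leading coefficient of $N(x)$, because each factor $x\,g(\gamma_i) - f(\gamma_i)$ contributes the degree-one term $g(\gamma_i)x$ and (since $g(\gamma_i)\neq 0$) has degree exactly $1$ in $x$. Thus $N(x) = D x^n + (\text{lower-order terms in } \Q[x])$, so $q(x) = N(x)/D$ is a monic degree-$n$ polynomial with rational coefficients, which proves the claim.

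I do not anticipate a genuine obstacle; the only points requiring care are the bookkeeping about poles — one must pass to lowest terms so that ``$\gamma_i$ is not a pole of $r$'' is correctly translated into ``$g(\gamma_i)\neq 0$'' rather than a statement about a particular fraction representing $r$ — and the standard observation that the fundamental theorem of symmetric polynomials, together with the fact that the $e_j(\gamma)$ are coefficients of $p$, is precisely what converts symmetry in the roots into rationality.
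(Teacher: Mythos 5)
Your proof is correct and follows essentially the same route as the paper's: both reduce the rationality of the coefficients of $q$ to the fundamental theorem of symmetric polynomials together with Vieta's formulas applied to the roots of $p$. The only difference is cosmetic --- you clear the denominators $g(\gamma_i)$ first and work with honest polynomials, whereas the paper applies the symmetric-function theorem directly to the symmetric rational functions $e_d\left(r(x_1),\dots,r(x_n)\right)$; your version makes the role of the no-pole hypothesis (namely $\prod_i g(\gamma_i)\neq 0$) slightly more explicit.
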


\begin{proof}
  For every $d \in [n]$, let $e_d\in \Z[x_1, x_2, \dots, x_n]$ be the elementary symmetric polynomial of degree $d$ in $n$ variables. Note that $f_d\left(x_1, x_2, \dots, x_n\right) := e_d\left(r(x_1), r(x_2), \dots, r(x_n)\right)$ is a symmetric rational function with rational coefficients. Thus by the fundamental theorem of symmetric functions, $f_d$ can be written as a rational function of the elementary symmetric functions $e_1, e_2, \dots, e_n$ with rational coefficients. Note that the coefficient of $x^d$ in the polynomial $q(x)$ is precisely $(-1)^{n-d}f_{n-d}(\gamma_1, \gamma_2, \dots, \gamma_n)$. Since $p(x)\in \Q[x]$, Vieta's formulas tell us $e_d(\gamma_1, \gamma_2, \dots, \gamma_n)$ is a rational for all $d\in[n]$, hence the coefficients of $q(x)$ are rational.
\end{proof}

\begin{proof}[Proof of Proposition~\ref{not_real}]
  We first prove the existence and the uniqueness of the positive root of $p(x) := x^{m+1} - (1 + x + \dots + x^{m-1})$. Since $p(1) = 1 - m < 0$ and $p(\infty) = \infty$, by the intermediate value theorem, $p$ has a positive root. Suppose for a moment that $\beta$ is just a positive root of $p$. Then $\beta^{m+1} = 1 + \beta + \dots + \beta^{m-1} > 1$, and so $\beta > 1$, hence $\beta^{m+1} = 1 + \beta + \dots + \beta^{m-1} > m$. Notice that
  \begin{equation} \label{beta_simple}
    \beta^{m+1} = 1 + \beta + \dots + \beta^{m-1} = \frac{\beta^m - 1}{\beta - 1} \implies \beta - 1 = \frac{1}{\beta} - \frac{1}{\beta^{m+1}}.
  \end{equation}
  This means that $\beta$ is a zero of the function $f(x) := x - 1 - \frac{1}{x} + \frac{1}{x^{m+1}}$ in $\left(m^{1/(m+1)},\infty\right)$. Note that $f'(x) = 1 + \frac{1}{x^2} - \frac{m+1}{x^{m+2}}$, and when $x > m^{1/(m+1)}$, the derivative $$f'(x) \ge \frac{2}{x} - \frac{m+1}{x^{m+2}} = \frac{2x^{m+1}-(m+1)}{x^{m+2}} \ge \frac{2m - (m+1)}{x^{m+2}} > 0,$$ which shows the uniqueness of the positive root of $p$.

  Next we show that $\al$ is an algebraic integer. Let $\beta = \beta_0, \beta_1, \dots, \beta_m$ be the $m+1$ complex roots of $p$ counted with multiplicity. Let $\gamma_i := \sqrt{\beta_i}$ be the principal square root of $\beta_i$ for $i \in \sset{0, 1, \dots, m}$. Clearly, $\pm\gamma_0, \pm\gamma_1, \dots, \pm\gamma_m$ are the roots of the monic polynomial $p(x^2) \in \Z[x]$, and $\pm 1/\gamma_0, \pm1/\gamma_1, \dots, \pm 1/\gamma_m$ are the roots of the monic polynomial $-x^{2(m+1)}p(1/x^2) \in \Z[x]$. Therefore both $\gamma_0$ and $1/\gamma_0$ are algebraic integers, and so is $\al = \gamma_0 + 1/\gamma_0$.

  Suppose $\al'$ is a conjugate element of $\al$. Applying Lemma~\ref{rational_trans} to $\pm\gamma_0, \dots, \pm\gamma_m$ and the rational function $x\mapsto x + 1/x$, we know that $\al'$ must be of the form $\pm(\gamma_i + 1/\gamma_i)$, for some $i \in \sset{0,1,\dots,m}$. Assume for the sake of contradiction that $\al' = \pm(\gamma_i + 1/\gamma_i)$ is real for some $i \neq 0$. Without loss of generality, we may assume that $i = 1$. We can solve the quadratic equation $\gamma_1^2 \mp \al'\gamma_1 + 1 = 0$ and get $\gamma_1 = \frac{\pm\al' \pm \sqrt{\al'^2-4}}{2}$, where the plus-minus signs are independent. As $\beta = \beta_0$ is the only positive root of $p$, $\gamma_1 = \sqrt{\beta_1}$ is not real. Since $\al' \in \R$ but $\gamma_1 \not\in \R$, it must be the case that $-2 < \al' < 2$ and $\gamma_1 = \frac{\pm\al' \pm i\sqrt{4-\al'^2}}{2}$. Therefore we have $$\abs{\beta_1} = \abs{\gamma_1}^2 = \left(\frac{\al'}{2}\right)^2 + \left(\frac{\sqrt{4-\al'^2}}{2}\right)^2 = 1.$$ Let $\theta$ be the argument of $\beta_1$, that is $\beta_1 = \cos\theta + i\sin\theta$. Since $\beta_1$ is a root of $p$, \eqref{beta_simple} holds for $\beta_1$, that is, $\beta_1 - 1 = \beta_1^{-1} - \beta_1^{-(m+1)}$, or $\beta_1^{-(m+1)} = 1 - \left(\beta_1 - \beta_1^{-1}\right) = 1 - 2i\sin\theta$ after rearrangement. Using $\abs{\beta_1} = 1$, we get $\sin\theta = 0$, hence $\beta_1 = \pm 1$. As $p(1) < 0$, $\beta_1 = -1$ is the only possibility, in which case $\al' = \pm(\sqrt{\beta_1}+1/\sqrt{\beta_1}) = 0$ and it cannot be a conjugate element of $\al$.

  The contradiction shows that any real conjugate element of $\al$ is of the form $\pm(\gamma_0+1/\gamma_0)$, that is, $\pm\al$. If $\al$ were a totally algebraic integer, its degree would be either $1$ or $2$. In the former case, $\al$ would be an integer, which contradicts with $2 < \al < \la^* < 3$. In the latter case, $\al^2$ would be an integer, which contradicts with $2 < \al < \la^* < \sqrt{5}$.
\end{proof}

\end{document}